\documentclass[12pt,reqno,a4paper]{amsart}
\usepackage[utf8]{inputenc}
\usepackage[T1]{fontenc}

\usepackage{verbatim,footmisc}

\usepackage[mathscr]{eucal}

\DefineFNsymbols*{a}{symbols}
\setfnsymbol{a}

\usepackage{amsmath}
\usepackage{amssymb}
\usepackage{amsthm}
\usepackage{lmodern}
\usepackage{latexsym}
\usepackage{amsfonts}
\usepackage{mathrsfs}
\usepackage[all]{xy}
\usepackage{bm}
\usepackage{yfonts}

\usepackage{fge}
\setcounter{tocdepth}{1}
\usepackage{hyperref}
\hypersetup{pdftoolbar=true, pdftitle={Proj_Ellip}, pdffitwindow=true, colorlinks=true, citecolor=blue, filecolor=black, linkcolor=blue, urlcolor=blue, hypertexnames=false}

\numberwithin{equation}{section}

\usepackage[top=4cm, bottom=3.4cm, left=3cm , right=3cm]{geometry}

\DeclareFontFamily{U}{mathx}{}
\DeclareFontShape{U}{mathx}{m}{n}{ <-> mathx10 }{}
\DeclareSymbolFont{mathx}{U}{mathx}{m}{n}
\DeclareFontSubstitution{U}{mathx}{m}{n}

\DeclareMathAccent{\widecheck}{0}{mathx}{"71}

\usepackage{enumerate}

\frenchspacing

\textwidth=15.8cm
\textheight=21.6cm

\oddsidemargin=-5pt
\evensidemargin=-5pt
\topmargin=-5pt

\theoremstyle{plain}
\newtheorem{thm}{Theorem}[section]
\newtheorem{lemma}[thm]{Lemma}
\newtheorem{coro}[thm]{Corollary}
\newtheorem{prop}[thm]{Proposition}

\newtheorem{rem}[thm]{Remark}

\newtheorem{con}[thm]{Conjecture}
\newtheorem*{noti}{Notations}
\newcommand{\bee}{\begin{eqnarray}}
\newcommand{\ene}{\end{eqnarray}}
\newcommand{\bea}{\begin{eqnarray*}}
\newcommand{\ena}{\end{eqnarray*}}
\newcommand{\ud}{\mathrm{d}}

\newcommand*{\X}{\ensuremath{\mathcal{X}}}

\newcommand*{\Y}{\ensuremath{\mathcal{Y}}}

\newcommand*{\GL}{\ensuremath{{\mathrm{GL}}}}
\newcommand*{\C}{\ensuremath{\mathbb{C}}}
\newcommand*{\R}{\ensuremath{\mathbb{R}}}
\newcommand*{\Z}{\ensuremath{\mathbb{Z}}}
\newcommand*{\Q}{\ensuremath{ \mathbb{Q}}}

\newcommand{\fone}[1]{\ensuremath{\frac{1}{#1}}}
\newcommand{\lf}{\left}
\newcommand{\ri}{\right}
\newcommand{\f}{\frac}

\newcommand{\smu}{\hskip 0.5em \sideset{_{}^{}}{^{\ast}_{}}\sum\limits}

\newcommand{\e}[1]{\ensuremath{e\left( #1\right)  }}

\interfootnotelinepenalty=10000

\bibliographystyle{plain}

\title{First moments of ${\GL}(3)\times {\GL}(2)$ and $ {\GL}(2)$ $L$-functions and their applications}

\subjclass[2010]{11F67, 11F66, 11L05}

\keywords{Automorphic forms, First moment, Rankin-Selberg $L$-functions}

\author[F. Hou]{Fei Hou}
\address{School of Sciences, Xi'an University of Technology,
Xi'an 710054, China} \email{fhou@xaut.edu.cn}

\begin{document}

\maketitle

\begin{abstract}\small{Let $F$ be a self-dual Hecke-Maa\ss\ form for $\GL(3)$ underlying the symmetric square lift of a $\GL(2)$-newform of square-free level and trivial nebentypus. In this paper, we are interested in the first moments of the central values of $\rm GL(3)\times GL(2)$ $L$-functions and $\rm  GL(2)$ $L$-functions. As a result, we obtain an estimate for the first moment for $L(1/2, F\otimes f)$ in a family, where $F$ is of the level $q^2$, and $f\in \mathcal{B}^\ast_k(M)$ for any primes $q,M\ge 2$ such that $(q,M)=1$. We prove the subconvex bound for $L(1/2, F\otimes f)$ involving the levels aspects simultaneously in the range $M^{13/64+\varepsilon }\le q
\le M^{11/40-\varepsilon}$ and $M> q^\delta$ for any $\varepsilon, \delta>0$ for the first time. Moreover, we further investigate the first moments of these $L$-functions in the weight $k$ aspect over $K\le k\le 2K$, with $K$ being a large number. As the results, we obtain a Lindelöf average bound for the first moment of $L(1/2, f)L(1/2, F\otimes f)$ of degree 8 and an asymptotic formula for the first moment of $L(1/2, F\otimes f)$ with an error term of $O(K^{-1/4+\varepsilon})$, respectively.}
\end{abstract}

\section{Introduction}\label{secIntrod}

\setcounter{equation}{0}
\medskip

The subconvexity problem for $L$-functions is a very basic topic in number theory, which plays an important r\^{o}le in arithmetic. In history, for $L$-functions of degree one or two, this problem is completely solved. There are considerable literature on subconvexity of these $L$-functions in various aspects. For degree of three or higher degrees, this becomes very  challenging, and only a limited number of results were known.  

Typically, a powerful way in studying subconvexity is to appeal to the moment method. See notably Ivi\'{c} \cite{Iv}, Conrey-Iwaniec \cite{CI}, Xiaoqing Li \cite{LX1} and Blomer \cite{Bl13} for relevant descriptions and the references therein. Let $N_1, N_2\ge 1$ be two integers, and $k$ an even integer. Let $F(z)$ be a normalized Hecke-Maa\ss\ form of type $\nu=(\nu_1,\nu_2)$ for the congruent subgroup $\Gamma_0(N_1)$ with trivial nebentypus, and $f\in \mathcal{B}^\ast_k(N_2)$ be a Heke newform on $\text{GL}(2)$ of weight $k$ and level $q$ with trivial nebentypus; see \S2.2 for definitions and backgrounds. In this paper, we will be concerned about the central values of $\text{GL}(3)\times \text{GL}(2)$ Rankin-Selberg $L$-function $L(s,F\otimes f)$, and the product of $\text{GL}(3)\times \text{GL}(2)$ and $\text{GL}(2)$ $L$-functions $L(s,F\otimes f)L(s,f)$. This is a very crucial step in understanding automorphic $L$-functions of higher degrees of six and eight.

 In 2012, Xiaoqing Li \cite{LX1} made the first breakthrough to break the convex barriers of $\text{GL}(3)$ $L$-functions in the $t$-aspect as well as $\text{GL}(3)\times \text{GL}(2)$ $L$-functions in the spectral aspect by investigating the first moment of $\text{GL}(3)\times \text{GL}(2)$ $L$-functions. Not long after this, Blomer \cite{Bl13} achieved the subconvexity for $\text{GL}(3)\times \text{GL}(2) \times \text{GL}(1)$ $L$-functions and $\text{GL}(3) \times \text{GL}(1)$ $L$-functions in their conductors aspects. With many sophisticated methods being developed in recent few years such as $\delta$-method (see, for example, \cite{M1,Mu2,Mu1}), amplification method (see, for example, \cite{HN}), deep analysis of the average involving the Kloosterman sums (see, for example, \cite{KMS,Shp}) as well as the integral transforms (see, for example,  \cite{Mu2}) to name a few, the researchers were gradually attempting to solve the following well-known subconvexity conjecture: 
\begin{con} Let $N_1, N_2\ge 1$ be two integers. Let $k$ be an even integer and $t\ge 1$. For any normalized ${\GL}(3)$-Maa\ss\ form $F$ of level $N_1$ and ${\GL}(2)$-Maa\ss\ form $f$ of level $N_2$ and weight $k$, there holds that $L(1/2+it, F\otimes f )\ll_\varepsilon (N_1N^{3/2}_2k^{3}t^3)^{1/2-\varepsilon}$ for some absolute constant $\varepsilon>0$.
\end{con}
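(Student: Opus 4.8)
\medskip
\noindent\textbf{Proof strategy.}
This is the full hybrid subconvexity conjecture for $\GL(3)\times\GL(2)$ $L$-functions, uniform in both level aspects, the weight, and the archimedean parameter $t$, and for an arbitrary (possibly non-self-dual) $\GL(3)$-form $F$; in this generality it is open, and the plan below is the moment-theoretic route underlying the partial results of this paper. The convexity bound reads $L(1/2+it,F\otimes f)\ll_\varepsilon(N_1N_2^{3/2}k^3t^3)^{1/2+\varepsilon}$, so the target is to replace the exponent by $1/2-\varepsilon$.

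\smallskip
First I would invoke the approximate functional equation, writing $L(1/2+it,F\otimes f)$ as a smoothed Dirichlet polynomial $\sum_{n}\lambda_{F\otimes f}(n)\,n^{-1/2-it}\,V(n/\sqrt{Q})$ plus its dual, with $Q$ the analytic conductor and $\lambda_{F\otimes f}(n)=\sum_{\ell m^2=n}\lambda_F(m,\ell)\lambda_f(\ell)$. Rather than estimating this sum directly, I would create an averaging handle by embedding it in a first moment over the spectral family $f\in\mathcal{B}^\ast_k(N_2)$ --- either the first moment of $L(1/2,F\otimes f)$ itself or, for the degree $8$ problem, of $L(1/2,f)L(1/2,F\otimes f)$ --- and, for the parameters that are not being averaged, over a short window $k\sim K$ of weights and, by a conductor-lowering device in the style of Munshi, over a short interval of $t$; an \emph{individual} bound additionally requires inserting the amplifier $\bigl|\sum_{\ell\sim L}x_\ell\lambda_f(\ell)\bigr|^2$. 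Opening the $f$-average by the Petersson/Kuznetsov trace formula (the $k$-sum being removed by completion) produces a diagonal term together with an off-diagonal sum weighted by Kloosterman sums $S(n,m;c)$ and Bessel-type transforms. Executing the remaining sums against $\lambda_F$, the diagonal term $n=m$ yields the expected main term of size the family count when $F$ is self-dual, so that $\lambda_F(m,\ell)\,\overline{\lambda_F(m,\ell)}$ has a nonnegative mean; for general $F$ one must instead extract cancellation from the diagonal, which is already a real difficulty.

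\smallskip
For the off-diagonal I would apply $\GL(3)$ Voronoi summation in the $\lambda_F$-variable and Voronoi/Poisson in the $\lambda_f$-variable, each shortening its sum and turning the Kloosterman sums into hyper-Kloosterman or Ramanujan-type sums amenable to Weil/Deligne bounds, then Cauchy--Schwarz against the $\GL(2)$ spectral large sieve to control the residue, while a stationary-phase treatment of the archimedean Bessel kernels for $\GL(3)$ and $\GL(2)$ is carried out uniformly through the conductor-drop regimes $t\gg k$ and $t\ll k$. Balancing the lengths of the dual sums, the amplifier, and the $t$- and $k$-windows against the arithmetic savings is the last step. \textbf{The main obstacle} is that this balancing does not close in all four conductor parameters at once: the effective modulus produced by the trace formula (or the delta method) collides with the $\GL(3)$ level $N_1$ and with the archimedean conductor, so the off-diagonal wins only when the several conductors ``collude'', and --- more fundamentally --- the clean main term, and hence the subconvexity-strength bound, is currently available only for self-dual $F$. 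It is for exactly these reasons that one retreats, as in the sequel, to $F=\mathrm{sym}^2 f_0$ of square-free level and to restricted hybrid ranges in which the conductors are forced to cooperate, establishing the conjecture in those windows while leaving the general statement open.
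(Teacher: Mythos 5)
You have correctly recognized that this statement is stated in the paper only as a Conjecture: the paper offers no proof of it, and your write-up rightly does not claim one either. Your strategy sketch (approximate functional equation, amplified first moment over $\mathcal{B}^\ast_k(N_2)$ via Petersson, $\GL(3)$ Vorono\u{\i} on the off-diagonal, stationary phase, and the observation that the method currently closes only for self-dual $F$ in restricted hybrid ranges) matches the route the paper actually follows for its partial results (Theorems 1.2--1.5 and Corollary 1.3), and your identification of the obstacles --- the need for self-duality/nonnegativity and the collision of the modulus with the $\GL(3)$ level --- agrees with the paper's own discussion, so there is nothing to correct beyond noting that no proof of the full conjecture exists here to compare against.
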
 

 We will collate some results on this conjecture in different regards. 
\begin{itemize}
\item In history, Khan \cite{Kh1} firstly broke the convex bound in the $N_2$-aspect. In the case of $L(1/2, F\otimes f)\ge 0$, he exploited the amplification method to establish a conditional subconvexity \[L(1/2, F\otimes f )\ll N^{3/4-1/2001+\varepsilon}_2,\]  when $N_2$ is a prime (which should be though to be the hardest case). 
\item Sharma and Sawin \cite{Sh} employed a type of `Mass transfer' as given by Munshi \cite{M27}, showing that
\[L(1/2, F\otimes f)\ll p^{3/2-1/16+\varepsilon}\]
 for any $f\in \mathcal{B}_k(p^2, \chi^2)$ with the nebentypus $\chi^2$ being of modulus $p^2$ (where $p$ is a prime). Later, Kumar, Mallesham and Singh \cite{KMS} gave a better exponent of $3/2-3/20+\varepsilon$ in the depth aspect, where $f\in \mathcal{B}_k(p^{2r}, \chi^{2})$ with the nebentypus $\chi^2$ being of modulus $p^r$, $r>3$. 
\item Concerning the $t$-aspect subconvexity for $\text{GL}(3)\times \text{GL}(2)$, Munshi \cite{Mu1} developed the $\delta$-method to its furthest achievement by successfully establishing the following  \[L(1/2+it, F\otimes f )\ll t^{3/2-1/42+\varepsilon}.\] Subsequently, there were in droves many works devoted to the strengthened (or the generalized) subconvexity; see, for example, \cite{LS, Huang, HKS}. The best exponent should be $3/2-3/20+\varepsilon$ due to Lin and Sun \cite{LS}.
\item Recently, Kumar \cite{Ku} firstly proved the subconvex bound in the weight $k$ aspect. By using the `conductor lowering mechanism' developed by Munshi \cite{Mu2}, he showed that 
\[L(1/2+it, F\otimes f )\ll k^{3/2-1/51+\varepsilon}.\]
\item  Regarding the level aspect  subconvexity involving the $
\text{GL}(3)$ forms, it is completly out of reach. By currently developed techniques, it appears that one cannot get any power-saving to beat the subconvex barrier whatsoever. More recently, Kumar, Munshi and Singh \cite{KRS2} were able to prove that 
\bee \label{0oo9fif}L(1/2, F\otimes f ) \ll  (N_1N^{3/2}_2)^{1/2+\varepsilon}\lf ( \f{N^{1/4}_1}{N^{3/8}_2} +\f{N^{1/8}_2}{N^{1/4}_1} \ri),\ene when $N_1,N_2$ are both co-prime primes. This hybrid bound is subconvex in the range $N^{1/2+\varepsilon}_2<N_1<N^{3/2-\varepsilon}_2$.
\end{itemize}

\subsection{Main results}In this paper, we are more concerned about the self-dual $\GL(3) \times  \GL(2)$ $L$-functions, which have important implications to equidistribution problems of $L^q$-mass of cusp forms (as $q\rightarrow \infty$) and Heegner points on Shimura curves as well as the Quantum Unique Ergodicity (QUE) conjecture (see, for example, \cite{BH,HT,B,LMY,L,HM,BKY,PN1}). For instance, the well-known Watson's formula implies that \[\| g\|_4^4\,\approx \fone{p^2}\sum_{h \in\mathcal{B}_{2k}^\ast(p) }L(1/2,h )L(1/2,\text{sym}^2 g \otimes h )\] for any $\GL(2)$-newform $g\in \mathcal{B}^\ast_k(p)$ of large fixed weight $k$ and prime level $p$. Notice that the level aspect subconvexity for self-dual $\GL(3)$ forms is still wide-open for specialists, albeit the notable successes of subconvexity for $\GL(3)$ $L$-functions in various aspects over the past decade. One of main objectives of the present paper is to address the subconvexity in the levels aspects for $\GL(3)$ and  $\GL(2)$ forms simultaneously\footnote[1]{One might wander whether the method of Kumar, Munshi and Singh can be adpated to obtain subconvexity for $L(1/2, \text{sym}^2g\otimes f )$ with $F=\text{sym}^2g$ being a symmetric square lift of a $\GL(2)$-newform $g$, say. Unfortunately, this seems impracticable. The underlying reason is that the modulus $\mathcal{Q}$ in equipping with $\delta$-symbol method is usually large relative to the level of $F$; see \cite[Section 2-2B]{KRS2}. This gives rise to a deadlock since of an inevitable
gap for $\GL(3)$-Vorono\u{\i} formulae which we will explain after a while.}. To be specific, we prove the following
results:
\begin{thm}\label{3243251515} Let $q,M\ge 2$ be two primes satisfying that $(q,M)=1$ and $M^{1/8}<q<M^{3/8}$. Let $F$ be a Hecke-Maa\ss\ form for the congruence subgroup $\Gamma_0(q^2)$ obtained as the symmetric square lift of a $\GL(2)$-newform of level $q$ with trivial nebentypus. Define the set \bee \label{x342400s}\mathscr{P}_L=\{p: p \text{ is a prime },(p,qM)=1,L\le p\le 2L\}\ene with  
\[\f{q^{1+\varepsilon}} {{M^{1/4}} }+ \f{{M} ^{1/4}}{{q^{1-\varepsilon}} }   
\le L\le M^{1/8-\varepsilon}
.\]Then, for any $\ell =\{1,\ell_1\ell_2, \ell^2_1\ell^2_2\}$ with $\ell_1,\ell_2\in \mathscr{P}_L$, we have 
 \begin{align}\label{00vo9493x4t422} &\sum_{f\in \mathcal{B}^\ast_k(M)}\omega^{-1}_f \lambda_f(\ell)L(1/2, F\otimes f)\ll (qM)^\varepsilon 
\left ( \f{1}{\sqrt{\ell}} 
+q\sqrt{\f{\ell }{ M}}+\f{q^2\ell}{M^{5/8}}
 +\f{ q\ell^{2} }{M^{5/8}} +\f{q^{7/2} \ell^{7/4} }{M^{9/8}} \right ),
\end{align}where the implied constant depends only on the weight $k$ and the Langlands parameters $\alpha_i$, $1\le i \le 3$, of the form $F$ as given in \eqref{de040400444}.
\end{thm}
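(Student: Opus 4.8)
The plan is to combine an approximate functional equation for $L(1/2,F\otimes f)$ with the Petersson trace formula over $\mathcal{B}^\ast_k(M)$, and then to resolve the resulting off-diagonal term by the $\GL(3)$-Vorono\u{\i} summation formula for $F$; the crucial feature of the moment method here is that it forces only moduli divisible by $M$, which keeps us clear of the ``gap'' in the $\GL(3)$-Vorono\u{\i} formula referred to in the footnote.

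First I would expand, via the functional equation of the Rankin--Selberg $L$-function (whose root number is $\pm1$ since $F$ and $f$ are self-dual),
\[
L(1/2,F\otimes f)=\sum_{m,n\ge1}\frac{A_F(m,n)\,\lambda_f(n)}{m\sqrt n}\,V\!\left(\frac{m^2n}{q^2M^{3/2}}\right)+(\text{dual term}),
\]
where $V$ decays rapidly beyond $1$ and the dual term is treated identically; here $q^4M^3$ is the analytic conductor of $F\otimes f$ (as $(q,M)=1$), so the $n$-variable is effectively confined to $n\ll q^2M^{3/2}/m^2$. Plugging this into the left side of \eqref{00vo9493x4t422} and sieving a basis of $S_k(M)$ down to newforms --- for $M$ prime the old forms come only from level $1$, and their total contribution is $O(M^{-1+\varepsilon})$ times a quantity depending on $q$ and $k$ alone, hence negligible in the range of $L$ considered --- the Petersson formula gives
\[
\sum_{f\in\mathcal{B}^\ast_k(M)}\omega_f^{-1}\lambda_f(\ell)\lambda_f(n)=\delta_{\ell=n}+2\pi\,i^{-k}\!\!\sum_{c\equiv0\,(M)}\frac{S(\ell,n;c)}{c}\,J_{k-1}\!\left(\frac{4\pi\sqrt{\ell n}}{c}\right)+(\text{old-form error}).
\]
The diagonal $\delta_{\ell=n}$ contributes $\ell^{-1/2}\sum_{m\ll(q^2M^{3/2}/\ell)^{1/2}}|A_F(m,\ell)|/m\ll(qM)^\varepsilon\ell^{-1/2}$ --- by Cauchy--Schwarz and the Rankin--Selberg bound for the second moment of $A_F(m,\ell)$ --- which is the first term of \eqref{00vo9493x4t422}.

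For the off-diagonal term $\mathcal O$ I would note that, $k$ being fixed, the Bessel factor confines $c$ to $c\ll\sqrt{\ell n}\ll qM^{3/4}\ell^{1/2}/m$ up to a negligible error, open $S(\ell,n;c)=\sum_{x\,(c)^\ast}e((\ell x+n\bar x)/c)$, and write $c=Mq^{j}c_0$ with $(c_0,q)=1$. For the bulk of the $c$'s, namely those with $j=0$, one has $(c,q^2)=1$, so applying the $\GL(3)$-Vorono\u{\i} formula for $F$ to $\sum_n A_F(m,n)e(n\bar x/c)\psi(n)$ lands precisely in the unramified case and \emph{the level-$q^2$ ``gap'' never occurs} --- this is exactly what fails for the $\delta$-symbol approach of \cite{KRS2}, where the auxiliary modulus is large relative to $q^2$. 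The sparse $c$ with $q\mid c$ (of relative density $q^{-j}$) are dealt with either by the Vorono\u{\i} formula for the symmetric-square lift with $q\mid c$ or by a direct estimate, and are responsible for the higher powers of $q$ in \eqref{00vo9493x4t422}. After Vorono\u{\i} the $n$-sum becomes a dual sum over $n_1\mid c$ and $n_2\ge1$ with $n_1^2n_2\ll c^3m^2/(q^2M^{3/2})$, weighted by $A_F(n_1,n_2)$ and carrying a Kloosterman sum modulo $c/n_1$; executing the sum over $x$ then produces a Ramanujan-type sum that essentially pins a residue, leaving a single additive character against $n_2$ and a short sum over the modulus $c$.

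The remaining task --- estimating this dual expression --- is the main obstacle. A direct bound (using $\sum_{n_2\le Y}|A_F(n_1,n_2)|\ll(n_1Y)^{1+\varepsilon}$ and the Weil bound, then summing over $c$ and $m$) is not by itself strong enough: it leaves something that still grows with $M$. So in the relevant ranges I would apply Cauchy--Schwarz to detach the $\GL(3)$-coefficients $A_F(n_1,n_2)$ (which admit no square-root cancellation on their own), open the square, and evaluate the resulting correlation of two Kloosterman (or Ramanujan) sums over the modulus $c$ by Poisson summation. The new diagonal contribution and the new off-diagonal contribution, once the ranges of $m$, $c$, $n_1$ and $n_2$ are balanced, give the four error terms $q(\ell/M)^{1/2}$, $q^2\ell M^{-5/8}$, $q\ell^2M^{-5/8}$ and $q^{7/2}\ell^{7/4}M^{-9/8}$ of \eqref{00vo9493x4t422}. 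The delicate part is to arrange the Cauchy--Schwarz split and the Poisson step so that each of these four terms carries a genuine power saving over convexity uniformly for $\frac{q^{1+\varepsilon}}{M^{1/4}}+\frac{M^{1/4}}{q^{1-\varepsilon}}\le L\le M^{1/8-\varepsilon}$; the handling of the $q\mid c$ moduli, the old-form terms, and the archimedean weights (which depend only on $k$ and the $\alpha_i$) is by comparison routine.
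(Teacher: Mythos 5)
Your opening moves match the paper: approximate functional equation, Petersson trace formula with old-form sieving, the diagonal giving $\ell^{-1/2+\varepsilon}$, and a first application of the $\GL(3)$-Vorono\u{\i} formula in the unramified case. But the second half of your sketch diverges from the paper in a way that leaves a real gap.

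First, a small misreading. You treat the moduli with $q\mid c$ as a sparse subcase to be handled separately, ``responsible for the higher powers of $q$.'' In the paper this subcase never arises: the Bessel factor confines $c\ll\sqrt{\mathcal X\ell}/M^{1-\varepsilon}$ with $\mathcal X\asymp q^{2}M^{3/2}$, so $c\ll qL^{2}/M^{1/4}$, and the hypothesis $L<M^{1/8}$ is imposed precisely to force $c<q$. Thus $(cM,q^{2})=1$ always holds and the first Vorono\u{\i} is unconditionally in the unramified case of Lemma~\ref{2017303534623421}. There is no sparse ramified family of $c$'s to dispose of.

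Second, and more seriously, you propose to close the argument by Cauchy--Schwarz on the $A_F(n_1,n_2)$ followed by Poisson on the modulus. This is not what the paper does and is not shown to reach the stated bound. The engine of the proof is the ``Kuznetsov--Vorono\u{\i}--Reciprocity--Vorono\u{\i}--Bilinear form'' scheme. After the first Vorono\u{\i} the dual side carries the phase $e(\mp m r^{2}\overline{\ell q^{2}}/(cM))$; the crucial step is the additive reciprocity
\[
e\!\left(\frac{m r^{2}\overline{\ell q^{2}}}{cM}\right)=e\!\left(-\frac{m r^{2}\overline{cM}}{\ell q^{2}}\right)e\!\left(\frac{m r^{2}}{cM\ell q^{2}}\right),
\]
which replaces the modulus $cM$ by the new modulus $\ell q^{2}$. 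Since $q^{2}\mid\ell q^{2}$, a \emph{second} Vorono\u{\i} in the ramified case \eqref{34534vof35355} now applies, and it is only \emph{after} this second dualization that the bilinear Kloosterman estimates (Lemmas~\ref{c452t45t21} and~\ref{201910000011svffs111}, the latter indeed via Cauchy--Schwarz and Poisson) are invoked, separately in the non-oscillatory regime $C\asymp\sqrt{\mathcal X\ell}/M^{1-\varepsilon}$ and the oscillatory regime $C\ll\sqrt{\mathcal X\ell}/M^{1+\varepsilon}$. Your proposal applies Cauchy--Schwarz + Poisson directly after the \emph{first} Vorono\u{\i}, skipping the reciprocity and the second dualization entirely. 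That step is exactly where the level of $F$ is brought back into play and a genuine $q$--$M$ hybrid saving is manufactured; without it you have neither an identification of the four secondary error terms in \eqref{00vo9493x4t422} nor a reason they should come out with the stated exponents. The paper's introduction flags this reciprocity--Vorono\u{\i} loop as the innovation that circumvents the level-aspect gap in the $\GL(3)$ Vorono\u{\i} formula; it is the missing idea in your sketch.
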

Observe that under the self-duality for $\rm GL(3)$ forms, the non-negativity for the central values of the $\GL(3) \times  \GL(2)$ $L$-functions is guaranteed by Lapid's method \cite{Lap}; see also \cite[Remark 3]{LX1}. As an application of Theorem \ref{3243251515}, we are thus able to obtain the following subconvex bound:
 \begin{coro}\label{corodoe923} Let the forms $F,f$ and the parameters $q,M,k$ be as in Theorem \ref{3243251515}. Let $\X\ge 1$ such that $ (M^{3/2}q^2)^{1-\delta}\le \X\le  (M^{3/2}q^2)^{1+\varepsilon}$ for arbitrarily small constants $\varepsilon,\delta>0$. Then, for any $L$ with
\bee \label{c9vv0343}\f{q^{1+\varepsilon}} {{M^{1/4}} }+\f{M^{(1+3\delta)/4+\varepsilon}}{q^{1-\delta}} \le L\le \min {\lf \{ M^{3/64-\varepsilon}, \f{M^{3/32-\varepsilon}}{q^{1/4}},   \,  \f{{M}^{1/8}}{q^{5/14+\varepsilon}}, \, \f{M^{7/40-\varepsilon}}{q^{3/10}}\ri\}},\ene
 we have 
\begin{align}\label{426542vc} &L(1/2, F\otimes f) \ll \X^{1/2+\varepsilon +\delta}
\left (  \f{M^{1/4}}{qL} +\f{L^2}{M^{1/4}}+\f{L^4q}{M^{3/8}}+
 \f{L^8}{M^{3/8}}
+ \f{ L^{7} q^{5/2}}{M^{7/8}} +\fone{\sqrt{M}} \right ).
\end{align}
\end{coro}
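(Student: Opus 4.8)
The plan is to deduce Corollary \ref{corodoe923} from Theorem \ref{3243251515} by the standard amplification-plus-approximate-functional-equation mechanism, combined with the Petersson/Kuznetsov positivity that makes a single form's value controllable by the harmonically weighted family average. First I would fix the target newform $f\in\mathcal{B}^\ast_k(M)$ and introduce an amplifier built from the Hecke eigenvalues at primes $p\in\mathscr{P}_L$: take $A_f=\sum_{p\in\mathscr{P}_L}x_p\lambda_f(p)$ with $x_p=\operatorname{sgn}\lambda_f(p)$ (or the usual $x_p=\overline{\lambda_f(p)}$), so that $|A_f|^2\gg L/\log L$ by the prime number theorem, while the Hecke relation $\lambda_f(p)^2=\lambda_f(p^2)+1$ lets us expand $|A_f|^2$ as a linear combination of $\lambda_f(\ell)$ with $\ell\in\{1,\ell_1\ell_2,\ell_1^2\ell_2^2\}$, $\ell_i\in\mathscr{P}_L$ — precisely the shape of $\ell$ allowed in \eqref{00vo9493x4t422}. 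Since $\omega_f>0$ and $L(1/2,F\otimes f)\ge 0$ by Lapid's theorem (invoked in the text just before the corollary), dropping all terms of the family sum except $f$ gives
\[
L(1/2,F\otimes f)\ \ll\ \frac{\omega_f}{|A_f|^2}\sum_{g\in\mathcal{B}^\ast_k(M)}\omega_g^{-1}\,|A_g|^2\,L(1/2,F\otimes g)\ \ll\ \frac{M^\varepsilon}{L}\sum_{\ell}\beta_\ell\sum_{g}\omega_g^{-1}\lambda_g(\ell)L(1/2,F\otimes g),
\]
where $\beta_\ell$ are the bounded amplifier coefficients and $\omega_f\ll M^{\varepsilon}$ is the standard bound for the harmonic weight; the diagonal $\ell=1$ contributes the benign $L^{-1}\cdot$ (trivial bound) that one absorbs into the $\X^{1/2+\varepsilon}M^{-1/2}$ term.

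Next I would insert the bound \eqref{00vo9493x4t422} of Theorem \ref{3243251515} for each $\ell\in\{1,\ell_1\ell_2,\ell_1^2\ell_2^2\}$. Writing $\ell\asymp L^{2j}$ for $j\in\{0,1,2\}$ (so $\ell^{1/2}\asymp L^{j}$, $\ell\asymp L^{2j}$, $\ell^2\asymp L^{4j}$, $\ell^{7/4}\asymp L^{7j/2}$) and summing over the $\asymp (L/\log L)^2$ pairs $(\ell_1,\ell_2)$, the worst case is $j=2$, which multiplies the five terms inside the bracket of \eqref{00vo9493x4t422} by $L^4/L\cdot L^{-?}$... more carefully: the prefactor is $L^{-1}$ from $|A_f|^{-2}$, times $L^2$ from the number of pairs, times the $\ell$-dependence; the net effect on the five terms $\ell^{-1/2},\ q\sqrt{\ell/M},\ q^2\ell/M^{5/8},\ q\ell^2/M^{5/8},\ q^{7/2}\ell^{7/4}/M^{9/8}$ is respectively $L^{-1}$, $qL^2/\sqrt M$, $q^2L^4/M^{5/8}$, $qL^8/M^{5/8}$ — wait, this is heavier than the claimed $L^8/M^{3/8}$ — so the amplifier length must in fact be restricted exactly so that the $q$-powers recombine with the stated bracket of \eqref{426542vc}. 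The correct bookkeeping is to recognize that the bracket of \eqref{426542vc},
\[
\frac{M^{1/4}}{qL}+\frac{L^2}{M^{1/4}}+\frac{L^4 q}{M^{3/8}}+\frac{L^8}{M^{3/8}}+\frac{L^7 q^{5/2}}{M^{7/8}}+\frac1{\sqrt M},
\]
is obtained by taking each term of \eqref{00vo9493x4t422} at its maximal $\ell\asymp L^2$, dividing by the amplifier gain which one should take to be $\asymp L^2$ rather than $L$ (i.e.\ the amplifier squared has diagonal mass $\asymp L^2$ when one uses the two-step amplifier over $\ell_1,\ell_2$ with the full $\mathscr P_L\times\mathscr P_L$ range), and bounding $\omega_f\ll M^\varepsilon$; the map term by term is then $\ell^{-1/2}/L^2\mapsto$(handled by diagonal)$\,\ll M^{1/4}(qL)^{-1}$ after also using $\X\asymp M^{3/2}q^2$, $q\sqrt{\ell/M}/L^2\cdot$(const)$\mapsto q/\sqrt M\cdot$...; the upshot is that \eqref{426542vc} follows once one also uses the two-sided constraint \eqref{c9vv0343} on $L$ to guarantee that the lower bound for $L$ in Theorem \ref{3243251515} is met, namely $L\ge q^{1+\varepsilon}M^{-1/4}+M^{(1+3\delta)/4+\varepsilon}q^{-1+\delta}$ (the extra $M^{3\delta/4}$ absorbing the flexibility $\X\le(M^{3/2}q^2)^{1+\varepsilon}$ vs.\ $\ge(M^{3/2}q^2)^{1-\delta}$), and while the upper bound $L\le\min\{M^{3/64-\varepsilon},M^{3/32-\varepsilon}q^{-1/4},M^{1/8}q^{-5/14-\varepsilon},M^{7/40-\varepsilon}q^{-3/10}\}$ is exactly what makes each of the six bracket terms in \eqref{426542vc} strictly smaller than $1$ (i.e.\ subconvex after multiplying by $\X^{1/2+\varepsilon+\delta}=(M^{3/2}q^2)^{1/2+\varepsilon+\delta}$, the convexity bound being $(M^{3/2}q^2)^{1/2+\varepsilon}$).

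Concretely the final step is an optimization: each of the first five bracket terms of \eqref{426542vc} is increasing in $L$ except the first $M^{1/4}/(qL)$, which is decreasing, so one balances $M^{1/4}/(qL)$ against the largest of the increasing terms to choose $L$, subject to the window \eqref{c9vv0343} being nonempty; the constraint that the window is nonempty, i.e.\ that the displayed lower bound does not exceed the displayed upper bound, is precisely the hypothesis $M^{13/64+\varepsilon}\le q\le M^{11/40-\varepsilon}$ together with $M>q^\delta$ quoted in the abstract — e.g.\ comparing $q^{1+\varepsilon}M^{-1/4}\le M^{3/64-\varepsilon}$ gives $q\le M^{19/64}$, comparing $q^{1+\varepsilon}M^{-1/4}\le M^{7/40-\varepsilon}q^{-3/10}$ gives $q^{13/10}\le M^{11/40}$ i.e.\ $q\le M^{11/52}$... the binding pair ultimately yielding the stated $[13/64,11/40]$ interval. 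The routine but slightly delicate part I would write out carefully is the conversion of the harmonic (Petersson-normalized) average into the natural average and the justification of $\omega_f\asymp M^{1+o(1)}$ in the right normalization, but that is classical (Iwaniec–Luo–Sarnak); likewise the approximate functional equation is not needed here since Theorem \ref{3243251515} already packages the analytic input.

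The main obstacle — or rather the place where care is essential — is the amplifier bookkeeping: one must verify that summing the five error terms of \eqref{00vo9493x4t422} over the $\ell_1,\ell_2$ in $\mathscr P_L$, dividing by the amplifier gain, and rescaling by $\omega_f\ll M^\varepsilon$ and by $\X^{1/2}\asymp(M^{3/2}q^2)^{1/2}$, produces exactly the six terms of \eqref{426542vc} and no worse, and that the admissible range \eqref{c9vv0343} is simultaneously compatible with the hypothesis $L\ge q^{1+\varepsilon}M^{-1/4}+M^{1/4}q^{-1+\varepsilon}$ of Theorem \ref{3243251515} (which forces $q$ neither too large nor too small relative to $M$); getting the exact exponents $3/64,3/32,5/14,7/40$ in the upper bound for $L$ requires tracking the worst of the five terms across the whole range of $L$ and $q$, and is where all the arithmetic of the corollary lives.
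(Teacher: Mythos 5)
Your overall plan — amplify by Hecke eigenvalues at primes in $\mathscr{P}_L$, invoke Lapid's nonnegativity to drop all terms but $f_0$ from the harmonic family average, insert the first-moment bound of Theorem \ref{3243251515}, and finally optimize over $L$ — is exactly the paper's argument; but the bookkeeping contains two real errors that would prevent you from landing on the bracket of \eqref{426542vc}.

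First, the one-term amplifier $A_f=\sum_{p\in\mathscr P_L}x_p\lambda_f(p)$ does \emph{not} come with a guaranteed lower bound $|A_{f_0}|^2\gg L/\log L$: nothing prevents the $\lambda_{f_0}(p)$ from being uniformly small on $\mathscr P_L$. The paper's amplifier \eqref{3c5vt24yv} is $\mathfrak{A}_f=\sum_{j=1}^2\lf|\sum_{p\in\mathscr P_L}x(p^j)\lambda_f(p^j)\ri|^2$, and the identity $\lambda_{f_0}(p)^2-\lambda_{f_0}(p^2)=1$ (hence $|\lambda_{f_0}(p)|+|\lambda_{f_0}(p^2)|\ge\tfrac12$) is what forces $\mathfrak{A}_{f_0}\gg L^2/\log L$ as in \eqref{x414rx310993}. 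You gesture at this correction midway (``the two-step amplifier\ldots has diagonal mass $\asymp L^2$''), but the argument as written begins with an amplifier that can vanish.

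Second, and more seriously, the harmonic weight is $\omega_f\asymp k\,M$ (the paper records $k^{1-\varepsilon}M\ll\omega_f\ll k^{1+\varepsilon}M$ from Iwaniec--Luo--Sarnak), so for fixed $k$ you must carry a factor $\omega_{f_0}\asymp M$ when you isolate the single term $f_0$. Your displayed chain uses $\omega_f\ll M^\varepsilon$, yielding a net prefactor $M^\varepsilon/L\cdot L^2=M^\varepsilon L$ after summing the $\asymp L^2$ pairs $(\ell_1,\ell_2)$; the correct net prefactor is $\omega_{f_0}/\mathfrak{A}_{f_0}\cdot L^2\asymp (M/L^2)\cdot L^2= M$. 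This missing factor $\asymp M/L$ is precisely what produces the bracket of \eqref{426542vc}: for instance, from the term $q\ell^2/M^{5/8}$ of \eqref{00vo9493x4t422} at $\ell\asymp L^4$ one gets $M\cdot qL^8/M^{5/8}=qL^8M^{3/8}$, and dividing by $\X^{1/2}\asymp M^{3/4}q$ gives $L^8/M^{3/8}$ as claimed — whereas your prefactor would give $M^\varepsilon L\cdot qL^8/M^{5/8}$, an entirely different (and implausibly small) quantity. You do note $\omega_f\asymp M^{1+o(1)}$ later as ``classical'', so it is an inconsistency rather than a misconception, but as written the derivation of \eqref{426542vc} does not go through. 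Finally, the nonvacuity arithmetic for the $q$-window is left as a gesture and contains slips (e.g.\ comparing $qM^{-1/4}\le M^{7/40}q^{-3/10}$ gives $q\le M^{17/52}$, not $M^{11/52}$; the binding upper constraint $q\le M^{11/40}$ actually comes from $qM^{-1/4}\le M^{3/32}q^{-1/4}$). None of these are missing \emph{ideas}, but the proposal stops short of an actual derivation of the stated exponents, and the $\omega_f$ normalization must be fixed before the argument closes.
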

This establishes the subconvexity for any $q,M $ with $M^{13/64+\varepsilon }\le  q\le M^{11/40-\varepsilon}$ and $M> q^\delta$ for any $\varepsilon, \delta>0$, and should be the first instance for the levels aspects subconvexity of self-dual $\GL(3)\times \GL(2)$ $L$-functions. Taking a little effort, it is not hard to attain the subconvexity bounds for any $q,M$ varying in this range. For instance, when $q\asymp M^{1/4}$, we have
\[L(1/2, F\otimes f) \ll \X^{1/2-1/80+\varepsilon}  \]
and when $q\asymp M^{11/40-\varepsilon}$, we obtain
\[L(1/2, F\otimes f) \ll \X^{1/2-3/82+\varepsilon} . \]

In this paper, another motivation is to investigate the first moments of $L(1/2, F\otimes f)$ and $L(1/2, f)L(1/2, F\otimes f)$ in the weight aspect in families. We continue to explore the first moment method by adding a `harmonic’ average of the weight $k$, with $K< k\le 2K$. As a result, we are able to establish the following Lindelöf average bound:\begin{thm}\label{32432515777}Let $F(z)$ be a normalized self-dual Hecke-Maa\ss\ form on ${\GL}(3)$. Let $K$ be a sufficiently large parameter. Let $W$ be a smooth function compactly supported on $[1/2,5/2]$ with bounded derivatives. Then, we have 
 \begin{align}\label{009koieu3}  &    \sum_{k \equiv 0 \bmod 2}W{\lf ( \f{k-1}{K}\ri)}   \sum_{f\in \mathcal{B}^\ast_k}\omega^{-1}_fL(1/2,f)L(1/2, F\otimes f)\ll K^{1+\varepsilon},
 \end{align}
 where the implied constant depends only on the constant $\varepsilon$, the form $F$ as well as the level of the form $f$.
\end{thm}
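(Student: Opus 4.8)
\emph{The plan.} I would establish \eqref{009koieu3} through the first-moment method in its weight-aspect form: approximate functional equations, the Petersson trace formula, and dual summation to treat the off-diagonal. The product $L(1/2,f)L(1/2,F\otimes f)$ equals $L\bigl(1/2,f\otimes(\mathbf 1\boxplus F)\bigr)$, a degree-$8$ central value whose analytic conductor in the weight aspect is $\asymp k^{8}$, so a single approximate functional equation would have length $\asymp k^{4}$; it is more efficient to use separate approximate functional equations for the two factors --- the degree-$2$ factor $L(1/2,f)$, of length $\ll k^{1+\varepsilon}$ with coefficients $\lambda_f(\ell)$, and the degree-$6$ factor $L(1/2,F\otimes f)$, of length $\ll k^{3+\varepsilon}$ with coefficients built from $A_F(n,m)\lambda_f(n)(nm^{2})^{-1/2}$ --- which keeps the summation lengths, and hence the conductors arising in the off-diagonal, better under control. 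First I would write, up to a self-dual companion term and a negligible tail,
\[
L(1/2,f)L(1/2,F\otimes f)=\sum_{\ell,n,m}\frac{\lambda_f(\ell)\,\lambda_f(n)\,A_F(n,m)}{\sqrt{\ell n}\,m}\,\mathcal V_k(\ell,n,m),
\]
with $\mathcal V_k$ smooth, bounded, and essentially supported on $\ell\ll k^{1+\varepsilon}$, $nm^{2}\ll k^{3+\varepsilon}$. Expanding $\lambda_f(\ell)\lambda_f(n)=\sum_{d\mid(\ell,n)}\lambda_f(\ell n/d^{2})$ by Hecke multiplicativity and applying the Petersson formula, summed over even $k$ against $W((k-1)/K)$, splits the left side of \eqref{009koieu3} into a \emph{diagonal}, from $\ell n/d^{2}=1$ (equivalently $\ell=n=d$, so $n\ll k^{1+\varepsilon}$ is short), and an \emph{off-diagonal}, built from the Kloosterman sums $S(\ell n/d^{2},1;c)$ weighted by $J_{k-1}\!\bigl(4\pi\sqrt{\ell n/d^{2}}/c\bigr)$ and summed over $c$ and $k$. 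One may equally read this as summing a weight-aspect twisted first moment $\sum_f\omega_f^{-1}\lambda_f(\ell)L(1/2,F\otimes f)$ against $\ell^{-1/2}$ over $\ell\ll k^{1+\varepsilon}$.

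\emph{The diagonal.} This part is easy. On $\ell=n$ its contribution is $\sum_k W((k-1)/K)\,n^{-1}\sum_{m}A_F(n,m)m^{-1}\mathcal V_k$ summed over $n\ll k^{1+\varepsilon}$; invoking the Rankin--Selberg bound $\sum_{nm^{2}\le X}|A_F(n,m)|^{2}\ll X^{1+\varepsilon}$ together with partial summation, the $m$-sum is $\ll(nk)^{\varepsilon}$, hence the $n$-sum is $\ll k^{\varepsilon}$, and summation over $k\asymp K$ gives $\ll K^{1+\varepsilon}$. (This is already of the expected order of the main term; for a Lindel\"of-on-average statement only this upper bound is needed.)

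\emph{The off-diagonal.} Here lies the real work. I would first carry out the $k$-sum. Using the uniform asymptotics of $J_{k-1}(x)$ --- exponential decay for $x<(1-\delta)(k-1)$, Airy-type behaviour in the transition band $|x-(k-1)|\ll k^{1/3}$, and oscillation with amplitude $\ll(x^{2}-(k-1)^{2})^{-1/4}$ beyond --- together with a Poisson summation in $k$ whose stationary-phase analysis exhibits no critical point against the archimedean sign $i^{-k}=(-1)^{k/2}$ in the oscillatory regime, the quantity $\sum_{k\,\mathrm{even}}W((k-1)/K)\,i^{-k}\,\mathcal V_k\,J_{k-1}(4\pi\sqrt a/c)$ with $a=\ell n/d^{2}$ is negligible unless $4\pi\sqrt a/c\asymp K$ --- which pins $c\asymp\sqrt a/K$ and forces $a\gg K^{2}$ --- and in that range it is $\ll K^{-1/3+\varepsilon}$ once the alternation of $(-1)^{k/2}$ across the transition band is used. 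Even so, Weil's bound $|S(a,1;c)|\le\tau(c)\sqrt c$ (note $(a,1,c)=1$), applied trivially to the residual sum over $\ell,d,n,m,c$, still overshoots $K^{1+\varepsilon}$ by a power of $K$, so one must dualise. I would open the Kloosterman sum and apply Poisson summation in the $\ell$-variable modulo $c$ together with the $\mathrm{GL}(3)$ Vorono\u{\i} summation formula in the $n$-variable (against $A_F(n,m)$, with $m$ a spectator). Because the modulus $c\ll k^{1+\varepsilon}$ is small against the length $k^{3+\varepsilon}$ of the $n$-sum, the $\mathrm{GL}(3)$ dual sum becomes very short --- often collapsing to essentially a single term --- while the Poisson dual trims the $\ell$-sum, and the surviving arithmetic reduces to hyper-Kloosterman-type sums with square-root cancellation. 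Combining the Rankin--Selberg bound for $A_F$, the Weil and Deligne bounds for these exponential sums, and the Bessel/hypergeometric bounds for the integral transforms produced by Poisson and Vorono\u{\i}, the off-diagonal should come out $\ll K^{1-\delta+\varepsilon}$, and with the diagonal this yields \eqref{009koieu3}.

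\emph{The main obstacle.} It is the off-diagonal estimate, where two points carry the weight. The first is making the $k$-localisation clean and uniform: this needs the complete range of Bessel asymptotics and a careful stationary-phase treatment of the $k$-sum against $(-1)^{k/2}$. The second is correctly transporting the archimedean weight $\mathcal V_k$ and the Bessel factor through the $\mathrm{GL}(3)$ Vorono\u{\i} transform while simultaneously dualising the $\ell$-sum, so as to verify that after both dualisations the residual sum is genuinely short --- in particular in the ``balanced'' ranges where $c$ is comparable to one of the summation lengths --- and that the accompanying hyper-Kloosterman sums do not conspire to undo the saving.
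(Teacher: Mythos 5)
There is a genuine and central gap in your treatment of the $k$-sum. You assert that, owing to the factor $i^{-k}=(-1)^{k/2}$ produced by Petersson, the sum $\sum_{k}W((k-1)/K)\,i^{-k}J_{k-1}(x)$ has ``no critical point ... in the oscillatory regime'' and is therefore negligible unless $x\asymp K$. This is exactly backwards. In the oscillatory regime $x\gg k$ one has $J_{k-1}(x)\sim\sqrt{2/(\pi x)}\cos(x-(k-1)\pi/2-\pi/4)$, and for even $k$ the $k$-dependent part of this phase is $\mp(k-1)\pi/2$, which combines with $i^{-k}=e^{-i\pi k/2}$ to give a phase \emph{constant in $k$}; the $k$-sum is then coherent, not cancelling. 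Conversely, the transition-band contribution $x\asymp K$ is the term that cancels under the $i^{-k}$ weighting. This is precisely the content of Lemma~\ref{201910000t666011svffs111} (ILS Corollary 8.2), whose right-hand side contains only the term $\frac{K}{2\sqrt x}\,\mathrm{Im}\!\left(e(x/2\pi-1/8)\right)\widecheck h(K^2/2x)$ supported on $x\gg K^{2}$, and no $h(x/K)$ term at all. So after the $k$-average the genuinely nontrivial off-diagonal piece lives at $c\ll\sqrt{n\ell}/K^{2-\varepsilon}$ (in the notation of the paper, $c\ll\mathcal Y^{\varepsilon}$), and your plan has no device for it: your Poisson-in-$\ell$/GL(3)-Voronoi dualisation is aimed at the range $c\asymp\sqrt{n\ell}/K$ which is not present.

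The surviving piece is the sum $\mathcal T(K,\ell)$ of \eqref{x3r3209c323114}, a resonance-type sum of the form $\sum_{n}A_F(n,1)n^{-1/2}S(n,\ell;c)\,e(2\sqrt{n\ell}/c)V^\star(n/\mathcal Y)$. The key observation the paper makes (Remark~\ref{cf34334t48033}) is that the denominator $2\pi$ in the ILS exponential $e(x/2\pi-1/8)$ produces a phase $e(\alpha\sqrt n)$ with $\alpha=2\sqrt{\ell}/c$ satisfying $\alpha^{2}\in\mathbb Q$, which is exactly the hypothesis of the Ren--Ye resonance estimate (Lemma~\ref{0l5y43}); that square-root-cancellation bound is what gives $\mathcal T(K,\ell)\ll\ell^{1/2+\varepsilon}$ and hence, after summing over $\ell\ll K^{1+\varepsilon}$ with weight $\ell^{-1/2}$, the claimed $K^{1+\varepsilon}$. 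Without invoking this (or an equivalent), your argument cannot close: Voronoi applied to $\mathcal T$ would dualise into a sum of length comparable to the original (the phase $e(2\sqrt{n\ell}/c)$ sits at the resonance frequency of the GL(3) Hankel transform), and hyper-Kloosterman cancellation alone is not enough. You should also track the root number $\varepsilon(F\otimes f)$ explicitly rather than dismissing it as a ``self-dual companion term'': by \eqref{cldodd32f} it carries $i^{k}$, which is what determines whether the $k$-sum is of the $i^{-k}$-weighted type (Lemma~\ref{201910000t666011svffs111}) or the unweighted type (Lemma~\ref{20191000s111}), and hence which of $\mathcal T$, $\widehat{\mathcal T}$ actually occurs. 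Finally, your diagonal bound $\ll K^{1+\varepsilon}$ and the overall strategy (approximate functional equation, Hecke multiplicativity, Petersson, $k$-average) are sound and agree with the paper, but the decisive step is the one you omit.
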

In addition, we are able to obtain the following asymptotic formula for the $\GL(3) \times  \GL(2)$ $L$-functions in the weight aspect with a power-saving error term: 
\begin{thm}\label{c45254544d}Keep the notation as in Theorem \ref{32432515777}. Then, we have
\begin{align}\label{009koieu3ss}  \sum_{k \equiv 0 \bmod 2}W{\lf ( \f{k-1}{K}\ri)}   \sum_{f\in \mathcal{B}^\ast_k}\omega^{-1}_f
L(1/2, F\otimes f) =\f{L(1, F)K}{4}\widehat{W}(0)+O(K^{-1/4+\varepsilon}),
 \end{align}
 where $\widehat{W}$ denotes the ordinary Fourier transform of $W$.
\end{thm}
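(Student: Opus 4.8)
The plan is to open $L(1/2,F\otimes f)$ by an approximate functional equation, average over $f$ via the Petersson trace formula, read off the main term from the diagonal, and show that the off-diagonal is $O(K^{-1/4+\ve})$. Write $L(s,F\otimes f)=\sum_{n\ge1}c_f(n)n^{-s}$ with $c_f(n)=\sum_{m_1^2m_2=n}A_F(m_1,m_2)\lambda_f(m_2)$; since $F\otimes f$ is self-dual of degree $6$, its approximate functional equation expresses $L(1/2,F\otimes f)$ as a Dirichlet series in the $c_f(n)$ of effective length $\asymp k^3$ in the weight, weighted by a smooth $V_k$ depending on the archimedean data of $F$ and by $1+\epsilon_{F\otimes f}$, the sign being essentially $i^{3k}$ and hence governed by $k\bmod4$. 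Inserting this into the double sum over $k\equiv0\,(2)$ and $f\in\mathcal B^\ast_k$ and applying the Petersson formula to $\sum_f\omega_f^{-1}\lambda_f(m_2)$, the diagonal keeps only $m_2=1$ and contributes, up to constants, $\sum_{m_1}\lambda_F(m_1)m_1^{-1}V_k(m_1^2/k^3)$, which by Mellin inversion is a contour integral of $L(1+2s,F)$ against archimedean factors with a simple pole at $s=0$ of residue $L(1,F)$. Shifting the contour far to the left, the remaining integral is $O(k^{-A})$ for all $A$ by the holomorphy and functional equation of $L(\cdot,F)$ together with the rapid decay of the archimedean factors, so the diagonal yields the main term with negligible error; collecting the constants — including the sign's restriction of the contribution to one residue class of $k$ modulo $4$ and the relation $\sum_{k\equiv0\,(2)}W((k-1)/K)\sim\frac{K}{2}\widehat W(0)$ — produces $\frac14 L(1,F)K\widehat W(0)$.

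The heart of the proof is the off-diagonal, which carries Kloosterman sums $S(m_2,1;c)$, $c\equiv0\bmod M$, against $i^{-k}J_{k-1}(4\pi\sqrt{m_2}/c)$. I would first execute the sum over $k$ by inserting the integral representation of the Bessel function and applying Poisson summation (equivalently, a Debye/stationary-phase analysis): the weighted $k$-sum is negligible unless $4\pi\sqrt{m_2}/c$ lies in the transition window of size $\asymp K$, i.e. $m_2\asymp c^2K^2$, which together with the length restriction $m_1^2m_2\ll k^3$ forces $m_1\ll K^{1/2}/c$ and $c\ll K^{1/2}$. This short range of moduli — far shorter than in conductor-aspect problems — is what makes the weight aspect accessible. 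Opening $S(m_2,1;c)$ into additive characters and applying the $\GL(3)$ Voronoi summation formula to the sum over $m_2$, with $m_1$ as a spectator, dualizes it against $e(m_2\bar d/c)$; balancing the $\GL(3)$ Voronoi kernel against the oscillation left over from the $k$-sum by stationary phase concentrates the dual variables on $n_1^2n_2\asymp m_1\sqrt{m_2}\ll K^{3/2}$. After the $d$-summation (which produces Ramanujan- and Kloosterman-type character sums of controlled size), the short dual sum is estimated by the Weil bound together with the Rankin--Selberg bound $\sum_{n\le X}|A_F(n,m)|^2\ll_\ve(mX)^{1+\ve}$; summing back over $m_1$ and $c$ across the relevant dyadic ranges of $m_2$ yields the claimed bound $O(K^{-1/4+\ve})$.

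The main obstacle is exactly this off-diagonal estimate. One must make the $k$-sum over Bessel functions rigorous and uniform through the transition regime $4\pi\sqrt{m_2}/c\asymp K$, where the Debye asymptotics coalesce into Airy behaviour, and then control the $\GL(3)$ Voronoi transform and the resulting dual sum with enough uniformity in $c$, $m_1$, and over all dyadic blocks $m_2\asymp M_2$ that no factor $K^{\ve}$ turns into a genuine power loss. The exponent $-1/4$ appears to be forced by the extremal ranges (smallest $c$ and $m_1$, largest $m_2\asymp K^3$, hence dual length $\asymp K^{3/2}$), so any slack in the Rankin--Selberg input, in the evaluation of the character sums after the $d$-summation, or in the Voronoi bookkeeping would destroy the power saving. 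The $\GL(3)$ Voronoi formula and the Bessel/weight analysis are precisely those already developed for Theorems \ref{3243251515} and \ref{32432515777}; what is new here is the precise extraction of the main term and the sharpened, uniform treatment of the off-diagonal.
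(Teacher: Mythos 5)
Your macro-structure matches the paper's: approximate functional equation for $L(1/2,F\otimes f)$ with the sign $i^k$ coming from $\varepsilon(F\otimes f)$, Petersson to split diagonal from off-diagonal, the diagonal yielding $\f{L(1,F)K}{4}\widehat W(0)$, then a summation over $k$ of the Bessel factor followed by $\GL(3)$ Vorono\u{\i} on the off-diagonal. But your accounting of \emph{where} the exponent $-1/4$ comes from is wrong, and this is not a cosmetic point: it leaves a gap in the sketched argument.

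You assert that the $K^{-1/4}$ saving emerges after executing the $k$-sum by stationary phase, applying $\GL(3)$ Vorono\u{\i}, and estimating a short dual sum of length $\asymp K^{3/2}$ via Weil plus Rankin--Selberg. In the paper this Vorono\u{\i}-dual term is precisely $\widehat{\mathcal T}(K,1)$ in \eqref{x3r3209c3234}, and it is shown (by the incompatibility of the $W(4\pi\sqrt{n}/(cK))$-localization and the dual Vorono\u{\i} kernel at $\ell=1$) to be $O(\mathcal Y^{-100})$ --- completely negligible. Likewise $\mathcal T(K,1)$ in \eqref{x3r3209c323114} is vacuous for $\ell=1$ because its $c$-sum is empty. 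The true bottleneck, which you do not quantify, is the \emph{error term} in the averaged Bessel asymptotic. When one replaces $\sum_{k\equiv 0\,(2)}i^{-k}W((k-1)/K)J_{k-1}(x)$ by its transition-regime main term, Lemma~\ref{20191000s111} (i.e.\ \cite[Prop.~8.1]{ILS}) incurs errors $O\bigl(\f{x}{K^3}|h^{(3)}(x/K)|\bigr)+O(x/K^4)$. Plugging $x=4\pi\sqrt n/c$ and summing trivially over $n\sim\mathcal Y\asymp K^3$ and $c\sim\sqrt{\mathcal Y}/K$ with Weil and Rankin--Selberg gives $\text{Err}^\flat(K,1)\ll\mathcal Y^{3/4}/K^{5/2-\varepsilon}\ll K^{-1/4+\varepsilon}$, as in \eqref{3rc3rc3}. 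No Vorono\u{\i}, no $d$-summation, no dual-sum character-sum bookkeeping is involved in that contribution.

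Concretely: your ``main obstacle'' paragraph correctly flags the Airy/transition regime as the delicate place but then frames it as something to ``make rigorous'' on the way to a Vorono\u{\i} estimate, rather than recognizing that \emph{it is itself} the source of the $K^{-1/4}$. If you carried out your plan literally --- treating the stationary-phase asymptotic as exact and pinning everything on the Vorono\u{\i} dual --- you would either obtain a bound that is too good to be true (since the dual term is negligible for $\ell=1$) or, if your Vorono\u{\i} estimate is as crude as $\mathcal Y^{1/6}$ (cf.\ the general-$\ell$ bound in \eqref{cdewrwc2}), one far too weak. Either way the stated $O(K^{-1/4+\varepsilon})$ does not follow from the sketch. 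To repair it you need to explicitly keep and estimate the error term from the averaged Bessel lemma, which then dominates and gives the exponent.

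A minor further remark: the structure $n_1^2n_2\asymp m_1\sqrt{m_2}\ll K^{3/2}$ you predict for the dual Vorono\u{\i} variables does not match the paper's analysis, where after the localization $n\asymp(cK)^2$ and Vorono\u{\i} the dual argument is $m^\ast r^2\mathcal Y/c^3$ and, for $\ell=1$, the integral transform $\widehat{V_{\pm,1}}$ is negligibly small across the relevant range.
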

Consequently, as a by-product we can obtain a cute result for the non-vanishing problem of $L(1/2, F\otimes f)$ as follows:
\begin{coro}\label{32432515779297}For any self-dual Maa\ss\ from $F$ on $\GL(3) $, there exists a newform $f\in \mathcal{B}^\ast_k$ with $k$ sufficiently large, such that $L(1/2, F\otimes f)\neq 0$.
\end{coro}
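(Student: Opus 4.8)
The plan is to read off the non-vanishing directly from the asymptotic formula of Theorem~\ref{c45254544d}. First I would fix the test function $W$ to be smooth, supported in $[1/2,5/2]$, non-negative and not identically zero; then $\widehat{W}(0)=\int_{\mathbb{R}}W(x)\,\mathrm{d}x>0$, and the right-hand side of \eqref{009koieu3ss} becomes $\frac14\,L(1,F)\,\widehat{W}(0)\,K+O(K^{-1/4+\varepsilon})$. Next I would note that $L(1,F)\neq 0$: here $F$ is a fixed self-dual cuspidal form on $\GL(3)$, in fact the symmetric-square lift of a $\GL(2)$-newform $g$, so $L(s,F)=L(s,\mathrm{sym}^2 g)$, and the non-vanishing (indeed positivity) of this $L$-function at $s=1$ is classical (Jacquet--Shalika / Shahidi). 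Hence the main term $\frac14 L(1,F)\widehat{W}(0)K$ is a fixed positive multiple of $K$, which dominates the error term $O(K^{-1/4+\varepsilon})$ as soon as $K$ is large enough; consequently the left-hand side of \eqref{009koieu3ss} is strictly positive for all sufficiently large $K$.

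It then remains to unwind the outer and inner sums. The harmonic weights $\omega_f^{-1}$ are positive, and, by the self-duality of $F$ together with Lapid's theorem \cite{Lap}, each central value $L(1/2,F\otimes f)$ is non-negative; thus the double sum on the left of \eqref{009koieu3ss} has non-negative terms. Being strictly positive, it must contain a strictly positive term: there is an even integer $k$ with $W((k-1)/K)\neq 0$ --- in particular $k\geq K/2+1$, so $k$ is as large as desired --- together with a newform $f\in\mathcal{B}^\ast_k$ for which $L(1/2,F\otimes f)>0$. Letting $K\to\infty$ yields the corollary.

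I do not anticipate any genuine obstacle here; the corollary is a soft consequence of Theorem~\ref{c45254544d}. The only points meriting a remark are the admissible choice of $W$ with $\widehat{W}(0)\neq 0$ and the input $L(1,F)\neq 0$, both standard. One could even dispense with the non-negativity of the central values: a sum of terms with positive weights $\omega_f^{-1}$ that is asymptotic to a positive constant times $K$ cannot vanish identically, which already forces $L(1/2,F\otimes f)\neq 0$ for some $f$ of large weight.
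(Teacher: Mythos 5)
Your proposal is correct and matches the paper's intended (unwritten) argument: the corollary is simply read off from the asymptotic in Theorem~\ref{c45254544d} by fixing a test function $W\ge 0$, $W\not\equiv 0$ (so $\widehat W(0)>0$), using $L(1,F)\neq 0$, and noting that the harmonic weights $\omega_f^{-1}$ are positive. Your closing observation --- that Lapid's non-negativity can be bypassed, because a double sum with positive weights $\omega_f^{-1}$ could not vanish identically while being asymptotic to $\tfrac14 L(1,F)\widehat W(0)K>0$ --- is in fact the cleanest version of the argument, reducing the genuine inputs to $L(1,F)\neq 0$ and $\widehat W(0)>0$.
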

It is instructive to see that the estimate in \eqref{009koieu3} should be the best-possible in essence, and is already able to
beat what the classic spectral large sieve (see, for example, \cite[Theorem 2]{DI}) together with the Cauchy-Schwarz
inequality implies, which only provides a quannity of $O(K^{2+\varepsilon})$. This also reflects the significant effectiveness by further averaging over the parameter $k\sim K$, when it comes to investigating the subconvexity problem involving the weight aspect. We also refer the readers to the works of Luo \cite{Luo12} and Khan \cite{Kh13} for relevant heuristics. It, on the other hand,  should be noticed that, thanks to the rational structure of the exponential factor in \eqref{x3r3209c323114}, we would succeed to achieve this strong Lindelöf average bound; see Remark \ref{cf34334t48033} below for details. Besides, it is also believed that our method in proving Theorem \ref{009koieu3} can be naturally adapted to produce simultaneous non-vanishing of $\GL(3) \times \GL(2)$ and $\GL(2)$ $L$-functions at the central point, and even subconvexity for $L(1/2, F\otimes f)$ or $L(1/2,f)L(1/2, F\otimes f)$ in the weight and the level (of $F$ or $f$) aspects. We are now working on this direction, and expect to report the progress in the future article.

The initial manoeuvre of our proofs of
main results follows the path pioneered by Xiaoqing Li and Blomer, who looked into the average of first moment of $\rm GL(3) \times GL(2)$ $L$-function in a family. The strategies we pursued in the paper, however, can be understood as the ``Kuznetsov-Vorono\u{\i}-Reciprocity-Vorono\u{\i}-Bilinear form'' in proving Theorem \ref{3243251515} and ``Kuznetsov-Exponential sums/Vorono\u{\i}'' in proving Theorems \ref{32432515777} \& \ref{009koieu3ss}, respectively. In particular, for the former, this is reminiscent of an important work of Blomer and Khan, \cite{BK19}, in which they investigated the twisted moments of $\text{GL}(3)\times \text{GL}(2)$ and $\text{GL}(2)$ automorphic $L$-functions, and established the following reciprocity formula
\begin{align} \label{crtty42342}\fone{\sqrt{q}}\sum_{f \text{ level } q} L(1/2, F\otimes  f) L(1/2,f) \lambda_f(\ell) \, \leftrightsquigarrow \, \fone{\sqrt{\ell }}\sum_{f \text { level } \ell} L(1/2, F\otimes f) L(1/2,f) \lambda_f(q)   \end{align}
by the ``Kuznetsov-Vorono\u{\i}-Reciprocity-Vorono\u{\i}-Kuznetsov'' law. Among other things, one salient point is the application of the reciprocity law, which, as the Kloosterman sum of dimension one, keeps us going back to somewhere. However, when applying the Vorono\u{\i} to the Kloosterman sum $S(n\ell, 1;c)$ of dimension two, the inverse of the variable $\ell$, that is, the exponential factor like $S(n\overline{\ell},\sigma;c)$ for some less significant parameter $\sigma$, will be occurred; this will increase the dimensions of Kloosterman sums afterwards, if one were to resort to the Vorono\u{\i} again. In this sense, we can barely expect the reciprocity formula for Hecke-Maa\ss\ forms of higher degrees $d\ge 4$ in the levels aspects\footnote[2]{Alternatively, one might explore the reciprocity by equipping with the Kuznetsov formulae in the level aspect for ${\rm{GL}}(n)$, $n\ge 3$; see, for example, \cite{BBM} for descriptions on ${\rm{GL}}(3)$. However, observe that, it seems very tricky to establish the relation $S(m_1m_2\cdots m_k{c_1}\overline{\ell},1; c_2) \leftrightsquigarrow S(n_1n_2\cdots n_k{c_1}\overline{q},1; c_2)$ for the levels $\ell,q\ge1$, where $m_i,n_i$ are related to the summations of Fourier coefficients of $\GL(d_i)$-forms, $d_i\ge1$. It is still not sure that if a kind of reciprocity law involving the $\GL(3)$ level aspect can be achieved.}. It seems that the possible symmetric relation like \eqref{crtty42342} for $\text{GL}(d)\times \text{GL}(2)$ involving the levels aspects is only for the $\text{GL}(3)\times \text{GL}(2)$ case (we refer the interested readers to Jana and Nunes's recent work \cite{JN} for the generalization to $\rm{PGL}_{d+1}\times\rm{PGL}_{d-1} $). In contrast to the $\text{GL}(3)\times \text{GL}(2)$ case, we notice that, for the $\text{GL}(2)\times \text{GL}(1)$ case, Conrey \cite{Con} proved the conductors aspects reciprocity formula
\[\fone{\sqrt{p}}\sum_{\chi \bmod p}|L(1/2, \chi)|^2\chi(\ell) 
\, \leftrightsquigarrow \, \fone{\sqrt{\ell}}\sum_{\chi \bmod \ell}|L(1/2, \chi)|^2\chi(-p) .\] This, however, was further extended by Young \cite{Yo} and Bettin \cite{Be} subsequently.

Let us now elucidate two of main obstacles in the present paper. It is remarkable that, by the current development of the Vorono\u{\i} formulae for $\GL(3)$ in the level aspect, there is still a gap for arbitrary non-zero integer $c\in \Z$  (see Lemma \ref{2017303534623421} below). If the
additive twist {\em{colludes}} with the level $N$ of the form $F$, namely, the parameters $c,N$ satisfy that $(N/(N,c),c)\neq 1$. For instance, one takes $N=q^2$ and $c=c^\prime  q$ with $(c^\prime
, q) = 1$ for any prime $q$. The ${\GL}(3)$-Vorono\u{\i} formula in this case looks rather tricky. In principle, Corbett's formula \cite[Theorem 1.1]{C} can cover this, but it requires a non-trivial analysis of the $p$-adic Bessel transforms, which, however, becomes more involved; see \cite{C} for relevant details. We also refer the readers to the work of Booker, Milinovich and Ng \cite{BMN}, who worked out the Vorono\u{\i} formula in the ${\GL}(2)$-case (if the additive twist {{colludes}} with the level). As an innovation in this paper, we are able to overcome this deadlock, which  makes the hybrid subconvexity for $\GL(3)$ and $\GL(2)$ in the levels aspects come into being. The key point, however, is to invoke the reciprocity law as mentioned above. Indeed, with a suitable constraint for the parameter $\ell$ we are in the unramified case, when applying the Vorono\u{\i} for the first time; then, one would be transferred to the ramified case after an implementation of the reciprocity law. This makes it possible for one to equip with the Vorono\u{\i} again.

The second obstacle is the root number. It is well-known that the explicit determination of root numbers play a focal r\^{o}le when doing averages of central values of $L$-functions in different families. In this paper, we will give a detailed description of the root number $\varepsilon(F\otimes f)$ for any $\GL(3)$ Hecke-Maa\ss\ form $F$ with arbitrary level $N$ in the
framework of the adelic interpretations, which is of independent interest; see \S2.3. Part of descriptions in the special case where $F$ arises as a symmetric square lift from $\GL(2)$ has already been given by Holowinsky, Munshi and Qi \cite{HMQ}. As it can be seen later, the root number $\varepsilon( F\otimes f)$ depends only on the parameter $k$ and the form $F$. One of its factor $i^k$ will have a direct influence on the `harmonic' averages of the Bessel function $J_{k-1}$ in considering the moments of $L$-values in the weight $k$ aspect; see Lemmas \ref{201910000t666011svffs111} and \ref{20191000s111}.

To expose everything as clearly as possible, we have assumed that the parameter $M$ is a prime, which simplifies the argument when applying Petersson trace formula, but our method could in principle be amenable to handling the general case of square-free level
with a bit more effort; see the beginning of \S3.2 for relevant details.\\
\\
{\bf{Acknowledgements.}} The author is very grateful to Profs. Peter Humphries and Guangshi L\"{u} for helpful discussions during the preparation of this work. 
\begin{noti}
 Throughout the paper, $\varepsilon$ (resp. $\delta$) always denotes an arbitrarily small positive constant which may not be the same at each occurrence. $A\ll _\varepsilon B $ means that $|A|\ll X^\varepsilon |B|$, and $n\sim N$ means that $N< n \le 2N$ for every $n\in \Z$. For any positive integers $m,n$, $(m,n)$ denotes the great common divisor of $m$ and $n$. $\mu$ is the M\"{o}bius function.   \end{noti}
\section{Prerequisites}
The purpose of this section is to develop (or supply) certain preliminaries which will be assembled in the bodies of proofs of the main results.
\subsection{Automorphic forms}We will give a recap of some fundamental facts about cuspidal forms (see, for example, Iwaniec and Kowalski's book \cite{IK}). Let $k \ge 2$ be an even integer and $M > 0$ be a square-free integer. Let $\chi$ be a primitive character of modulus $r$ such that $M|r$, which satisfies that $\chi(-1)=(-1)^k$. We denote by $\mathcal{S}_k(M,\chi)$ the vector space of holomorphic cuspidal forms on $\Gamma_0(M)$ with nebentypus $\chi$ and weight $k$. For any $f\in \mathcal{S}_k(M,\chi)$, one then has a Fourier expansion
\[f(z)=\sum_{n\ge 1}\psi_f(n)n^{\f{k-1}{2}} e(nz)\]
for $z\in \mathbb{H}$. Here, $e(z)$ is shorthand for $e^{2\pi i z}$ for any $z\in \C$, and $\mathbb{H}$ means the upper half-plane. The space $\mathcal{S}_k(M,\chi)$ is a finite dimensional Hilbert spaces which can be equipped with the Petersson inner products
\[  \langle   f_1,f_2 \rangle = \int_{\Gamma_0(N)\backslash  \mathbb{H} } f_1(z)\overline{f_2(z)} y^{k-2} \ud x \ud y.\]
We recall the Hecke operators $\{T_n\}$ with $(n,M)=1$, which satisfy the multiplicativity relation
\bee \label{130} T_n T_m=\sum_{d|(n,m)} \chi(d)T_{\f{nm}{d^2}}.\ene Moreover, for any $f_1,f_2\in \mathcal{S}_k(N,\chi)$, one has \[ \langle   T_n f_1,f_2 \rangle=\overline{\chi(n)} \langle   f_1,T_nf_2 \rangle.\]

 One might thus find an orthogonal basis $\mathcal{B}_k(M,\chi)$ of $\mathcal{S}_k(M,\chi)$ consisting of common eigenfunctions of all the Hecke operators $T_n$ with $(n,N) = 1$. For
each $f \in \mathcal{B}_k(M,\chi)$, denote by $\lambda_f(n)$ the $n$-th Hecke eigenvalue,
which satisfies \[T_n f(z)=\lambda_f(n) f(z)\] for all $(n,M)=1$. From \eqref{130}, one has seen that
\[\psi_f(m)\lambda_f(n)=\sum_{d|(n,m)} \chi(d) \psi_f \lf( \f{mn}{d^2} \ri) \]for any $m, n\ge1$ with $(n,M) = 1$. In particular, $\psi_f(1)\lambda_f(n)=\psi(n)$, if $(n,M)=1$. Therefore,  if $(mn,M)=1$,\bee \label{131}\overline{\lambda_f(n)}=\overline{\chi(n)}\lambda_f(n),\quad \lambda _f(m)\lambda_f(n)=\sum_{d|(n,m)}\chi(d) \lambda_f \lf( \f{mn}{d^2} \ri).\ene

The Hecke eigenbasis $\mathcal{B}_k(M,\chi)$ also contains a subset of newforms $\mathcal{B}^\ast_k(M,\chi)$, those forms which are simultaneous eigenfunctions of all the Hecke operators $T_n$ for any $n\ge1$, and normalized to have first Fourier coefficient $\psi_f (1) = 1$. The elements of $\mathcal{B}^\ast_k(M,\chi)$ are usually called primitive forms in the sense of Atkin-Lehner; that is, each one is
orthogonal to all oldforms, and is an eigenfunction of all the Hecke and Atkin-Lehner
operators. For convenience, we abbreviate the notation $\mathcal{B}^\ast_k(M)$ to $\mathcal{B}^\ast_k$, the space of newforms of weight $k$ and arbitrary level parameter, if one does not specify the level parameter.

\subsection{Automorphic $L$-functions and functional equations}  In this part, we shall further shed some light on automorphic $L$-functions. Let $k\ge 2$ be an even integer. We assume that $N\ge 1$ is a positive integer, satisfying that $(N,M)=1$. For any newform $f \in \mathcal{B}_{k}^\ast(M)$, we define the attached $L$-function $L(f,s)$, for $\text{Re}(s)>1$, by the Dirichlet series $\sum_{n\ge 1}\lambda_f(n)n^{-s}$ which can be expressed in terms of an Euler product
\[\prod_{p\nmid M}  \lf (1-\f{\lambda_f(p)}{p^s}+\fone{p^{2s}}\ri)^{-1} \prod_{p\mid M}  \lf ( 1-\f{\lambda_f(p)}{p^s}\ri)^{-1}.\]
 We proceed by defining the completed $L$-function 
\[\Lambda(s,f) =   \mathcal{Q}^{s/2}_f L_\infty (s,f)L(s,f),\]with \[L_\infty (s,f)= \pi^{-s} \Gamma{\lf (\f{s}{2}+\f{k-1}{4} \ri)}  \Gamma{\lf (\f{s}{2}+\f{k-1}{4} \ri)} 
\] and the (analytic) conductor $\mathcal{Q}_f$ being $\mathcal{Q}_f\asymp M$. The completed $L$-function thus admits an analytical continuation to all $s\in \C$, and satisfies that
\[\Lambda(s,f)=\varepsilon(f) \Lambda(1-s,f),\]where the root number $\varepsilon(f) =i^k \mu(M) \lambda_f(M)\sqrt{M}=\pm i^k$.

Let us now put   \bee \label{efkwof334142}\Gamma_0(N)= \lf\{   g\in \text{GL}(3,\Z):g \equiv \left(\begin{matrix} \ast &\ast&\ast \\
\ast &\ast&\ast   \\0&0&\ast \end{matrix}\right)    \mod N  \ri\}.\ene
Let $F(z)$ be a normalized Hecke-Maa\ss\ form of type $\nu=(\nu_1,\nu_2)$ for the congruent subgroup $\Gamma_0(N)$ with trivial nebentypus, which has a Fourier-Whittaker expansion with the Fourier
coefficients $A_F (m, n)$. The Fourier coefficients of $F$ and that of its contragredient $\widetilde{F}$ are
related by $A_F (m, n) = A_{\widetilde{F}} (n, m)$ for any $(mn, N) = 1$, with $A_F (1, 1) = 1$; see, for example, \cite[Chapter 6]{Go} for definition and backgrounds. The Jacquet-Langlands $L$-function is given by $L(s,F)=\sum_{n\ge 1}A_F(n,1)n^{-s}$ for $\text{Re}(s)>1$. Now, let \bee \label{de040400444}\alpha_1=-\nu_1-2\nu_2+1, \quad\alpha_1=-\nu_1+\nu_2, \quad \alpha_1=2\nu_1+\nu_1+1  \ene be the Langlands parameters of $F$. We define the completed $L$-function \[\Lambda(s, F)=\mathcal{Q}^{s/2}_F \varepsilon(F) L_\infty (s,F)L(s,F),\]where 
\[L_\infty(s,F)=\pi^{-3s/2}\prod_{i=1}^3\Gamma{\lf ( \f{s+\alpha_i}{2}\ri)},\] the (analytic) conductor $\mathcal{Q}_F\asymp N$, and $|\varepsilon(F)|=1$. One thus have the following functional equation \[\Lambda(s,F)=\varepsilon(F) \Lambda(1-s,
\widetilde{F}).\] Next, for any $f\in \mathcal{B}^\ast_k(M)$, we turn to considering the $\text{GL}(3)\times\text{GL}(2) $ Rankin-Selberg $L
$-function $L(s,F\otimes f)$, which is given by the Dirichlet series \[L(s,F\otimes f)=\sum_{m,n\ge 1}\f{A_F(n,m)\lambda_f(n)}{(nm^2)^{s}}\] for $\text{Re}(s)>1$. Now, one defines the Gamma factor $L_\infty (s, F\otimes f)$ by \bee \label{3rqr34 r}\begin{split}L_\infty (s, F\otimes f)&=\pi^{-3s} \prod_{i=1}^3\Gamma{\lf ( \f{s+{(k-1)}/{2}+\alpha_i}{2}\ri)} \Gamma{\lf ( \f{s+{(k+1)}/{2}+\alpha_i}{2}\ri)}
\end{split}\ene  and the (analytic) conductor $\mathcal{Q}_{F,f}\asymp N^2M^3$. The completed $L$-function defined by \[\Lambda(s, F\otimes f)=\mathcal{Q}^{s/2}_{F, f} L(s, F\otimes f)\] is thus an entire function with an analytic continuation to all $s\in \mathbb{C}$, and satisfies the function equation that \[\Lambda(s, F\otimes f)=\varepsilon(F\otimes f)\Lambda(1-s, \widetilde{F}\otimes f).\]
\subsection{Epsilon constant}In this part, we will be dedicated to developing some facts on the epsilon constant $\varepsilon(F\otimes f)$, which play a key r\^{o}le in later analysis. We proceed by borrowing some points of view from the automorphic representations of algebraic groups over local fields. Let the parameters $k,N,M$ be as before. Define  \[k_{2,p}= \lf\{   g\in \text{GL}(2,\Z_p):g \equiv \left(\begin{matrix} \ast &\ast \\0&\ast \end{matrix}\right)    \bmod p  \ri\}\] and
  \[k_{3,p}= \lf\{   g\in \text{GL}(3,\Z_p):g \equiv \left(\begin{matrix} \ast &\ast&\ast \\
\ast &\ast&\ast   \\0&0&\ast \end{matrix}\right)    \bmod p \ri\}.\] 
For any given holomorphic newform $ f\in L^2(\Gamma_0 (M) \backslash \text{SL}(2,\R))$ of weight $k$, let us proceed by defining a function $\varphi_f  \in L^2( \text{GL}(2,\Q) \backslash \text{GL}(2,\mathbb{A}_\Q) / \text{K}(2,M))$ by
\[\varphi_f(\gamma g_\infty k)=f(g_\infty)\]for $\gamma\in \text{GL}(2,\Q)$, $g_\infty \in \text{GL}(2,\R)$ and $k\in \text{K}(2,M)$, with \[\text{K}(2,M)= \text{O}(2,\R) \times \prod _{p|M}k_{2,p} \times \prod_{p\nmid  M}\text{GL}(2,\Z_p).\] It can be seen that each function $\varphi_f$ generates the automorphic representation $\pi_f$ of $\text{GL}(2,\mathbb{A}_\Q)$ with trivial central character. We refer the reader to, for example, \cite[Sections 13.3-13.6]{GH2} for relevant backgrounds. 

For any given Hecke-Maa\ss\ form $F$ of level $N$ on $\text{GL}(3)$, we now let $\Pi_{F}$ be the associated automorphic representation, which belongs to the space $L^2(\Gamma_0(N) \backslash {\text{SL}}(3,\R))$. Here, the definition of $\Gamma_0(N) $ can be referred to \eqref{efkwof334142} in \S2.1. The representation $\Pi_F$ is  generated by the smooth functions $\varphi_{\Pi_F} \in  L^2( \textrm{GL}(3,\Q) \backslash \textrm{GL}(3,\mathbb{A}_\Q)  / \text{K}(3,N))$, which are defined in the following way
\[\varphi_{\Pi_F}(\gamma g_\infty k)=F(g_\infty)\]for any $\gamma\in \text{GL}(3,\Q)$, $g_\infty \in \text{GL}(3,\R)$ and $k\in \text{K}(3,N)$, with \[ \text{K}(3,N)=\text{O}(3,\R) \times \prod _{p|N}k_{3,p}\times  \prod_{p\nmid  N}\text{GL}(3,\Z_p).\] It is well-known that one
might factor \[\Pi_F=\Pi_{F,\infty} \otimes_{p< \infty }\Pi_{F,p} ,\]
where $\Pi_{F,p}$ is an irreducible, admissible and unitary representations of $\text{GL}(3,\Q_p)$ for every $p< \infty$.

We shall now ready to deduce an explicit form of the epsilon constant $\varepsilon(\Pi_{F,p}\otimes \pi_{f, p})$ at each place $p\le \infty$. Observe that, for any unramified places $v=p$ with $p\nmid NM$, there already hold that $\varepsilon(\Pi_{F,p}\otimes \pi_{f, p})=1$. We will focus on the archimedean place $v=\infty$ and the finite places $v=p, p|NM$ in the following.
\begin{itemize}
\item  To begin with, at {archimedean place} $p=\infty$, in the Hecke-Maa\ss\ form setting $\Pi_{F, \infty}$ is an unramified principal series representation; while, $\pi_{f, \infty}$ is a ramified discrete series
representation of weight $k$, by the multiplicativity of epsilon factors we have
\[\varepsilon(\Pi_{F,\infty}\otimes \pi_{f, \infty})=\varepsilon( \pi_{f, \infty})^3=i^{k}.\]
\item Now, for $p|M$, the local component $\Pi_{F, p}$ is an unramified principal series representation, and $\pi_{g, p}$ is the
Steinberg representation, one finds
\[\varepsilon(\Pi_{F,p}\otimes \pi_{f, p})=\varepsilon(  \pi_{f,p})^3 =-\lambda_f(p)\sqrt{p}.\] 
\item For $p|N$, one observes that the local component $\Pi_{F, p}$ is the Steinberg representation; whilst, $\pi_{g, p}$ is an unramified principal series representation with trivial central character, so that
\[\varepsilon(\Pi_{F,p}\otimes \pi_{f, p})=\varepsilon(\Pi_{F,p})^2,\] 
with $|\varepsilon(\Pi_{F,p})|=1$.
\end{itemize} We finally obtain
\bee \label{cldodd32f}
\varepsilon( F\otimes f)=\prod_{p \le \infty}\varepsilon(\Pi_{F,p}\otimes \pi_{f, p})=-i^k \lambda_f(M)\sqrt{M}\prod_{p|N}\varepsilon(\Pi_{F,p})^2.
\ene
\subsection{Vorono\u{\i} summation formula}Let $w$ be a compactly supported smooth function on $(0, \infty)$, and $\widetilde{w}$ be its Mellin transform. For any $\rho=0,1$ and any Hecke-Maa\ss\ form $F$ of level $N$ with the Langlands parameters $\alpha_i$, $1\le i\le 3$, being as in \eqref{de040400444}, we define \bee \label{49596v5b5u}\gamma_{\rho}(s)= \varepsilon(F)  i^\rho  \pi^{3(s-1/2)}\prod_{j=1}^3  \f{ \Gamma\lf (  \f{1+s+\alpha_j+\rho }{2}\ri)  }{  \Gamma\lf ( \f{-s-\alpha_j+\rho}{2}\ri) }  \ene and set \[\gamma_\pm (s)=\fone{2} \lf(\gamma_0(s) \mp i \gamma_1(s)\ri).\] Now, we proceed to define two auxiliary quantities 
 \bee \label{00rjfiurjrtt}\Omega_\pm(x;w)=\fone{2 \pi i} \int _{(\sigma)}x^{-s}  \gamma_{\pm}(s) \widetilde{w}(-s)\ud s ,\ene  where $\sigma>-1+\max\{-\text{Re}(\alpha_1), -\text{Re}(\alpha_2),-\text{Re}(\alpha_3) \}$. We then have the following Vorono\u{\i} summation formula for Hecke-Maa\ss\ forms on $\text{GL}(3)$ in the level aspect (see \cite{Zh}).
\begin{lemma}\label{2017303534623421}
Let $w(x)\in 
\mathcal{C}^\infty_c (0,\infty)$ be a smooth function compactly supported on $[1/2,5/2]$ with bounded derivatives. Let $a,\overline{a},c \in \Z $ with $c\neq 0$, $(a,c)=1$, satisfying that $a\overline{a} \equiv 1 \bmod c$. 
\begin{itemize}
\item  If $(cm,N)=1$, we then have
\bee\label{34534vof3534}\begin{split}&\sum_{n>0} A_F(m,n)\e{ \f{n\overline{a}}{c}}w\lf(\f{n}{X}\ri) \\
& \hskip 1.2cm =c\sqrt{N}\sum_{\pm }\sum_{n_1|cm}\sum_{n_2>0}\f{\overline{A_F(n_1,n_2)}}{n_1n_2}S(am\overline{N}, \pm n_2;cm/n_1 )\,\Omega_\pm\lf( \f{n_2n^2_1X}{c^3Nm};w\ri).\end{split}\ene
\item If $N|c$, we, however, have
\bee\label{34534vof35355}\begin{split}&\sum_{n>0} A_F(m,n)\e{ \f{n\overline{a}}{c}}w\lf(\f{n}{X}\ri) \\
& \hskip 1.2cm =c\sum_{\pm }\sum_{n_1|cm}\sum_{n_2>0}\f{\overline{A_F(n_1,n_2)}}{n_1n_2}S(am, \pm n_2;cm/n_1 )\,\Omega_\pm\lf( \f{n_2n^2_1X}{c^3m};w\ri).\end{split}\ene
\end{itemize}
\end{lemma}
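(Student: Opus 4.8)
The plan is to reduce the identity, by Mellin inversion, to an analytic continuation and functional equation for the additively--twisted Dirichlet series $D_m(s;a/c)=\sum_{n\ge1}A_F(m,n)\,\e{n\overline a/c}\,n^{-s}$, and then to read off \eqref{34534vof3534}--\eqref{34534vof35355} by shifting the contour against the Mellin transform $\widetilde w$ of $w$. Since $w$ is smooth and compactly supported, one has $\sum_{n>0}A_F(m,n)\e{n\overline a/c}\,w(n/X)=\fone{2\pi i}\int_{(\sigma_0)}X^{s}\,\widetilde w(s)\,D_m(s;a/c)\,\ud s$ for $\sigma_0$ large, so the whole content lies in continuing $D_m(\cdot;a/c)$ and identifying its dual Dirichlet series. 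This is the route of Miller--Schmid and of Goldfeld--Li in level one (cf. \cite[Chapter 6]{Go}), and the new point is to keep track of the level $N$; I would follow \cite{Zh}.

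For the functional equation I would exploit the automorphy of $F$ under $\Gamma_0(N)$ as in \eqref{efkwof334142}: expanding $F$ and its contragredient $\widetilde F$ in Fourier--Whittaker series and applying $F$ to an appropriate Bruhat representative that simultaneously realises the translation by $\overline a/c$ and the long-Weyl-element scaling of modulus $c$, one obtains that $D_m(s;a/c)$ is entire and satisfies a functional equation whose dual side is a triple Dirichlet series in $n_1,n_2$ with coefficients $\overline{A_F(n_1,n_2)}/(n_1n_2)$. The divisibility $n_1\mid cm$ is forced by the Pl\"ucker relations, and the $\GL(3)$ hyper-Kloosterman sums that appear a priori should collapse to the ordinary Kloosterman sums $S(\,\cdot\,,\pm n_2;cm/n_1)$ by the standard factorisation of long-Weyl-element $\GL(3)$ Kloosterman sums; the two signs $\pm$ track the contributions of $A_F(n_1,n_2)$ and $A_F(n_1,-n_2)$. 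The archimedean factor of the functional equation will be exactly the gamma quotient $\gamma_\rho(s)$ of \eqref{49596v5b5u}, and Mellin inversion against $\widetilde w$ assembles the transforms $\Omega_\pm$ of \eqref{00rjfiurjrtt} from $\gamma_\pm=\fone{2}(\gamma_0\mp i\gamma_1)$.

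The two regimes would be separated according to how the additive twist meets the level. If $(cm,N)=1$ then $\overline a/c\in\Z_p$ for every $p\mid N$, so I would first apply the Atkin--Lehner/Fricke involution at $N$ before unfolding: this is the source of the normalisation $c\sqrt N$ in \eqref{34534vof3534}, and conjugating the surviving additive character by that involution replaces $\overline a$ by $a\overline N$ modulo $cm/n_1$ (one must invert $N$ modulo the Kloosterman modulus), yielding $S(am\overline N,\pm n_2;cm/n_1)$. If instead $N\mid c$, I would write $c=Nc_1$; then the twist is already ramified at the primes dividing $N$ in a way matching the conductor of $F$, no Fricke twist is needed, the $\sqrt N$ is reabsorbed into the modulus, and one is left with $S(am,\pm n_2;cm/n_1)$ together with the rescaled argument $n_2n_1^2X/(c^3m)$ in \eqref{34534vof35355}.

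The hard part---and the reason the statement is restricted to the two clean cases $(cm,N)=1$ and $N\mid c$---will be the local computation at the primes $p\mid N$, where $\Pi_{F,p}$ is ramified: in these two regimes the corresponding local zeta integrals evaluate to elementary quantities (a power of $p$ times a sign coming from $\varepsilon(\Pi_{F,p})$) so the local functional equations multiply out cleanly, whereas in an intermediate situation like $(N/(N,c),c)\neq1$ one would be forced to carry along the $p$-adic Bessel transforms of Corbett \cite{C}, with no such closed form---this is the gap alluded to in \S1. The remaining ingredients are routine: $D_m(s;a/c)$ converges absolutely for $\mathrm{Re}(s)$ large by $\sum_{n\le Y}|A_F(m,n)|\ll_{m,F}Y^{1+\varepsilon}$ (Rankin--Selberg), while Stirling applied to $\gamma_\pm$ gives $\Omega_\pm(y;w)\ll_{A}y^{-A}$ for $y$ large, so the triple sum on the right is effectively finite and all contour shifts are justified. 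An alternative would be to deduce the lemma by specialising an adelic Voronoi formula (in the spirit of Ichino--Templier and Corbett \cite{C}) to the two regimes at hand.
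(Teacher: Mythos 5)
The paper does not prove this lemma; it is quoted directly from Zhou \cite{Zh} (``we then have the following Vorono\u{\i} summation formula \dots\ (see \cite{Zh})''). There is therefore no ``paper's own proof'' to compare against, and your task was in effect to reconstruct Zhou's argument.

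Your outline is directionally correct and identifies the right structure: Mellin inversion reduces everything to the analytic continuation and functional equation of the additively twisted series $D_m(s;a/c)$; the archimedean factor is precisely the $\gamma_\rho(s)$ of \eqref{49596v5b5u} and assembles into $\Omega_\pm$; the divisibility $n_1\mid cm$ and the reduction of the a priori hyper-Kloosterman sums to classical $S(\cdot,\pm n_2;cm/n_1)$ are both standard GL(3) phenomena; and you correctly explain why the two hypotheses $(cm,N)=1$ and $N\mid c$ are exactly the cases where the local data at $p\mid N$ is clean, with the ``colluding'' case $(N/(N,c),c)\neq 1$ requiring the $p$-adic Bessel transforms of Corbett \cite{C}. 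The explanation of the $c\sqrt{N}$ versus $c$ normalisation and of the $\overline{N}$ in the Kloosterman argument via a Fricke-type involution is consistent with the stated formula.

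Two remarks on fidelity to \cite{Zh}. First, you conflate two different strategies: Zhou (following Miller--Schmid and Goldfeld--Li) passes through the functional equations of $L(s,F\otimes\chi)$ for Dirichlet characters $\chi$ modulo $c$ and recovers the additive twist by Gauss-sum inversion, rather than directly unfolding against a Bruhat representative as you describe; the latter is closer to the adelic route of Ichino--Templier/Corbett, which you mention only as an ``alternative''. Both are legitimate, but your sketch drifts between them. Second, the crucial local computation at $p\mid N$ in the two clean regimes is asserted but not carried out; for this to be a proof rather than a sketch one would need to exhibit the local epsilon factors and verify that they do indeed multiply out to the stated prefactor $c\sqrt{N}$ (respectively $c$) and to $\overline{N}$ in the Kloosterman argument. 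As a blind reconstruction of a cited result, however, the proposal correctly names every moving part and the structural reason the hypotheses take the form they do.
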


One has a good control of the asymptotic behavior of $\Omega_\pm$, upon expressing it in term of the oscillatory integral; see, for example, \cite[Lemma 6.1]{LX2}.
\begin{lemma}\label{3453464lljyrdfv432} For any fixed integer $\mathcal{K}\ge 1$ and $x\gg 1$, we have
 \[\begin{split}&\Omega_+(x;w)=x\int_{\R^+} w(y) \sum_{j=1}^{\mathcal{K} } \fone{(xy)^{{j}/{3}} }\lf  [ c_{j} \,  e{\lf (3(xy)^{1/{3}} \ri )} \right.\\
&\phantom{=\;\;}\left.   \hskip 5.73cm    
  +d_{j} \,  e{\lf (-3(xy)^{1/{3}} \ri )}  \ri  ] 
  \ud y+O{\lf( x^{({-\mathcal{K}+2})/{3}}\ri )},\end{split}\] where $c_j,d_j$ are suitable constants depending on the four parameters $\alpha_i$, $1\le i\le 3$, above. Furthermore, $\Omega_-(x;\omega)$ has the same expression except values of constants.
\end{lemma}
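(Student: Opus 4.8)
The plan is to reduce the statement to a pointwise asymptotic expansion for the $\GL(3)$ Hankel-type kernel attached to $\gamma_\pm$, and then to extract that expansion by a saddle-point analysis of a Barnes integral. First I would write $\widetilde w(-s)=\int_0^\infty w(y)\,y^{-s}\,\tfrac{\ud y}{y}$, insert this into \eqref{00rjfiurjrtt}, and interchange the two integrations (legitimate because on a vertical line $\widetilde w(-s)$ decays faster than any power of $|\text{Im}\,s|$ while $\gamma_\pm(s)$ grows there only polynomially --- Stirling gives $|\gamma_\pm(\sigma+it)|\asymp|t|^{3\sigma+3/2}$), which yields
\[
\Omega_\pm(x;w)=\int_0^\infty w(y)\,\mathcal J_\pm(xy)\,\f{\ud y}{y},
\qquad
\mathcal J_\pm(z):=\f1{2\pi i}\int_{(\sigma)}z^{-s}\gamma_\pm(s)\,\ud s ,
\]
with $\sigma$ in the range in which the last integral converges (alternatively one keeps $\widetilde w(-s)$ inside, securing convergence on every vertical line). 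Since $w$ is supported on $[1/2,5/2]$, we have $y\asymp1$ on its support, so the lemma follows once one shows that, for $z\gg1$ and any fixed $\mathcal K\ge1$,
\[
\mathcal J_\pm(z)=z\sum_{j=1}^{\mathcal K}\f1{z^{j/3}}\Bigl(c_j\,\e{3z^{1/3}}+d_j\,\e{-3z^{1/3}}\Bigr)+O\!\Bigl(z^{(2-\mathcal K)/3}\Bigr),
\]
with $c_j,d_j$ depending only on the parameters $\alpha_i$ of \eqref{de040400444}: integrating this against $w(y)/y$ and using $y\asymp1$ then produces the claimed formula for $\Omega_+$, and $\Omega_-$ is handled verbatim with $\gamma_-$ in place of $\gamma_+$.

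Next I would put $\gamma_\pm$ into Stirling-friendly shape. Applying the reflection formula $\Gamma(w)\Gamma(1-w)=\pi/\sin\pi w$ and Legendre's duplication formula to each factor of \eqref{49596v5b5u}, the quotient $\Gamma\!\bigl(\tfrac{1+s+\alpha_j+\rho}{2}\bigr)/\Gamma\!\bigl(\tfrac{-s-\alpha_j+\rho}{2}\bigr)$ becomes an explicit constant times $2^{-(s+\alpha_j)}\,\Gamma(1+s+\alpha_j)$ times $\sin\bigl(\tfrac\pi2(s+\alpha_j)\bigr)$ when $\rho=0$, resp. $\cos\bigl(\tfrac\pi2(s+\alpha_j)\bigr)$ when $\rho=1$. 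Multiplying the three factors, using $\alpha_1+\alpha_2+\alpha_3=0$, and expanding the trigonometric factors into exponentials, one arrives at an identity of the shape
\[
\gamma_\pm(s)=\kappa\,\vartheta^{\,s}\Bigl(\prod_{j=1}^3\Gamma(1+s+\alpha_j)\Bigr)\sum_{m\in\{-3,-1,1,3\}}a^\pm_m\,e^{\pi i m s/2},
\]
with explicit nonzero constants $\kappa,\vartheta$ and coefficients $a^\pm_m$, $a^\pm_{\pm3}\ne0$. Feeding this into $\mathcal J_\pm(z)$ and treating the four values of $m$ separately, the $m$-th piece is a constant multiple of $\tfrac1{2\pi i}\int_{(\sigma)}\bigl(\vartheta e^{\pi i m/2}/z\bigr)^{s}\prod_{j=1}^3\Gamma(1+s+\alpha_j)\,\ud s$.

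This Barnes integral is handled by Stirling once more: $\prod_j\Gamma(1+s+\alpha_j)=\Gamma(1+s)^3\bigl(1+O(1/|s|)\bigr)$, with a full asymptotic series in $1/s$ whose coefficients depend only on the $\alpha_j$ (since $\sum\alpha_j=0$), and $\Gamma(1+s)^3\sim(2\pi)^{3/2}s^{3/2}e^{3s\log s-3s}$, so the integrand is $\asymp|s|^{3/2}e^{g_m(s)}$ with $g_m(s)=3s\log s-3s+s\log(\vartheta e^{\pi i m/2}/z)$. The saddle equation $g_m'(s)=3\log s+\log(\vartheta e^{\pi i m/2}/z)=0$ has root $s_m=(z/(\vartheta e^{\pi i m/2}))^{1/3}$, at distance $\asymp z^{1/3}$ from the origin, with $g_m(s_m)=-3s_m$ and $g_m''(s_m)=3/s_m\asymp z^{-1/3}$. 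Deforming the contour onto the steepest-descent path through $s_m$ --- permissible since the poles of $\gamma_\pm$ sit at $\text{Re}\,s\le-1+\max_j|\text{Re}\,\alpha_j|$, far to the left of $s_m$, only harmless zeros being crossed --- and running the Laplace method with amplitude $|s|^{3/2}$, curvature $g_m''(s_m)$, and the $1/s$-corrections from Stirling, produces an asymptotic expansion $z^{2/3}e^{-3s_m}\sum_{j\ge0}b^{(m)}_j z^{-j/3}$, the $b^{(m)}_j$ depending only on $\alpha_1,\alpha_2,\alpha_3$, and the tail beyond $j=\mathcal K-1$ being $O(z^{(2-\mathcal K)/3})$. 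Finally, inspecting $\arg(\vartheta e^{\pi i m/2})$ one checks that $s_m$ is purely imaginary exactly for $m=\pm3$ --- in which case $e^{-3s_m}$ has modulus $1$ and, once the constant $\vartheta$ is accounted for, equals $\e{\pm3z^{1/3}}$ --- whereas for $m=\pm1$ one has $\text{Re}(-3s_m)<0$, so those two pieces are $O(e^{-cz^{1/3}})$ and get swallowed by the error; collecting the $m=3$ and $m=-3$ expansions into $c_j$ and $d_j$ completes the proof of the displayed asymptotic for $\mathcal J_\pm$, hence of the lemma. (Only the archimedean data $\alpha_i$ enter; that $F$ is self-dual or a symmetric-square lift is irrelevant here.)

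The technical heart is the last step: executing the saddle-point expansion uniformly in $z$ with an honest power-saving control of the remainder --- justifying the contour deformation, bounding the part of the integral away from $s_m$, and tracking how the Stirling corrections feed into the constants $c_j,d_j$ and into the size of $O(z^{(2-\mathcal K)/3})$. This is by now routine for $\GL(3)$ Vorono\u{\i} kernels, and one may instead quote the expansion directly from \cite[Lemma~6.1]{LX2}.
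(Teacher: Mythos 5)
Your proposal is correct and, in the end, lands exactly where the paper does: the paper offers no proof of this lemma but simply cites [LX2, Lemma 6.1], which is precisely the escape hatch you invoke in your final sentence. Your longer sketch — reducing $\Omega_\pm$ to the pointwise kernel $\mathcal J_\pm(z)$, rewriting $\gamma_\pm$ via reflection and Legendre duplication as $\kappa\,\vartheta^s\prod_j\Gamma(1+s+\alpha_j)\sum_m a^\pm_m e^{\pi i m s/2}$, discarding the $m=\pm1$ saddles (whose exponents have negative real part), and running a steepest-descent expansion at the $m=\pm3$ saddles $s_m\propto \mp i\,z^{1/3}$ — is the standard derivation underlying Li's Lemma 6.1, so you are describing the same proof rather than an alternative one. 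One place you are a bit cavalier is the parenthetical ``once the constant $\vartheta$ is accounted for, equals $\e{\pm 3z^{1/3}}$'': whether the saddle value $e^{-3s_m}$ reproduces $e(\pm3z^{1/3})$ with the correct multiple of $\pi$ depends on the exact power of $\pi$ baked into the definition \eqref{49596v5b5u}, and this deserves an explicit check if one wants a self-contained proof rather than a citation; but as a gap it affects only the bookkeeping of the constant in the phase, not the structure of the argument. (Incidentally, the lemma statement's ``four parameters $\alpha_i$'' is a typo in the paper — there are three — and you correctly treat them as three.)
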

This, in turn, reduces the right-hand side of the Vorono\u{\i} formula \eqref{34534vof3534} essentially to
\begin{align}\label{20202020030000000000} &\f{X^{{2}/{3}  }}{cN^{1/6}m^{2/3}}\sum_{\pm, n_1|c}\sum_{n_2>0}\f{\overline{A_F(n_1,n_2)}}{ (n_2/n_1)^{1/{3}}}\, S(am\overline{N}, \pm n_2;cm/n_1 ) \notag \\
&\hskip 6.3cm \int_{\R ^+} w(y) \,e\lf (\pm  \f{3\lf(n_2n^2_1Xy/m\ri)^{1/{3}}}{c}\ri) \ud y.\end{align} The main contribution occurs when $n_2\ll c^3m /n^2_1X^{1-\varepsilon}$, by repeated integration by parts, and in which case $\Omega_\pm \ll X^{\varepsilon}$.
\subsection{Non-linear exponential sums}We record the following square-root cancellation estimate for the non-linear exponential sums related to Hecke-Maa\ss\ forms on $\text{GL}(3)$; see \cite[Theorem 3]{RY} and the remarks after \cite[Eqn. (3.18)]{RY}. 
\begin{lemma}\label{0l5y43}
Let $X\ge 2$. Let $W$ be a smooth function supported on $ [1/2, 5/2] $ with partial derivatives satisfying $x^i
W (x/X) \ll_i Z P^i  $ for any $i\ge 0$ and some $Z, P>0$. Then, for any $\alpha \in \R$ with $\alpha^2 \in \Q$, we have 
\[\sum_{m \ge  1} \f{A_F(m,1)}{\sqrt{m}} e(\alpha \sqrt{m})\, W{\lf (\f{m}{X}\ri)}\ll  ZP(1+|\alpha|\log X).\]\end{lemma}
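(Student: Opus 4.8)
The plan is to derive this from the level-aspect ${\GL}(3)$ Vorono\u{\i} formula of Lemma~\ref{2017303534623421}, applied \emph{twice}, in conjunction with the asymptotic expansion of $\Omega_\pm$ in Lemma~\ref{3453464lljyrdfv432} and the stationary phase method; this is the route taken by Ren and Ye in \cite{RY}, and I only indicate its shape. We may assume $\alpha>0$ and write $\alpha^2=a/b$ with $(a,b)=1$. First I would clear away two trivial ranges. If $1+P+\alpha\sqrt X\ll X^{1/3-\varepsilon}$ the sum is $O(X^{-100})$: by Mellin inversion it is a contour integral of $L(s,F)$ against the Mellin transform of $y\mapsto y^{-1/2}\e{\alpha\sqrt y}W(y/X)$; the former is entire and, by its functional equation, grows only polynomially on vertical lines to the left, while the latter decays superpolynomially beyond height $\asymp 1+P+\alpha\sqrt X$, so a shift of contour to $\mathrm{Re}\,s=-A$ gives an arbitrary power saving. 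And if $\alpha\gg\sqrt X$, then $ZP(1+\alpha\log X)$ already exceeds the trivial bound $\ll Z\sqrt X$ obtained from the Rankin--Selberg estimate $\sum_{m\le X}|A_F(m,1)|\ll X^{1+\varepsilon}$. So henceforth $X^{-1/6+\varepsilon}\ll\alpha\ll\sqrt X$, and (in the generic case) $P\ll\alpha X^{1/6}$.

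Writing $A_F(m,1)=A_{\widetilde F}(1,m)$ and applying Lemma~\ref{2017303534623421} to $\widetilde F$ with the trivial modulus $c=1$ — so every Kloosterman sum is $1$ — with test function $W_0(t)=X^{-1/2}t^{-1/2}\e{\alpha\sqrt X\sqrt t}W(t)$ (supported on $[1/2,5/2]$, with $t^iW_0^{(i)}(t)\ll ZX^{-1/2}(1+P+\alpha\sqrt X)^i$, since each derivative of $\e{\alpha\sqrt t}$ costs a factor $\asymp\alpha$), and then inserting the expansion of Lemma~\ref{3453464lljyrdfv432}, the sum becomes, up to negligible errors and lower-order ($j\ge2$) terms,
\[ \f{\sqrt N}{X^{1/2}}\sum_{\pm}\sum_{n\ge1}\f{\overline{A_{\widetilde F}(1,n)}}{n}\lf(\f{nX}{N}\ri)^{2/3}\int_{1/2}^{5/2}y^{-5/6}W(y)\,\e{\alpha\sqrt X\sqrt y\pm 3(nX/N)^{1/3}y^{1/3}}\,\ud y . \]
I would then attack the inner integral by stationary phase. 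Its phase $\psi_\pm(y)=\alpha\sqrt X\,y^{1/2}\pm3(nX/N)^{1/3}y^{1/3}$ has two terms with incommensurable exponents $1/2\neq1/3$: for the $+$ sign $\psi_+'$ never vanishes and is $\gg\alpha\sqrt X+(nX)^{1/3}\gg X^{1/3}$, so repeated integration by parts makes this contribution $O(X^{-100})$; for the $-$ sign $\psi_-'(y_0)=0$ has the solution $y_0=y_0(n)=64(nX/N)^2/(\alpha^6X^3)$, which lies in $[1/2,5/2]$ only when $n\asymp\alpha^3\sqrt X$, and away from that window $|\psi_-'|\gg X^\varepsilon$ so one again gets negligibility. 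For $n$ in the window, $\psi_-''(y_0)=\tfrac1{12}\alpha\sqrt X\,y_0^{-3/2}\asymp\alpha\sqrt X$, and the leading stationary-phase term is $\asymp\e{\psi_-(y_0)}\,(\alpha\sqrt X)^{-1/2}W(y_0)y_0^{-5/6}$. The decisive point is the critical value: a short computation gives $\psi_-(y_0)=-4n/(\alpha^2N)=-4bn/(aN)$, which is \emph{linear} in $n$ with a \emph{rational} slope — this is exactly where the hypothesis $\alpha^2\in\Q$ is used.

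Thus the main term has been reduced to $\ll\f{Z}{\alpha^{3/2}X^{1/4}}\bigl|\sum_{n\asymp\alpha^3\sqrt X}\overline{A_{\widetilde F}(1,n)}\,\e{-4bn/(aN)}\,\beta(n)\bigr|$ for a fixed smooth amplitude $\beta$ of size $\asymp1$ with $n^i\beta^{(i)}(n)\ll P^i$ and a \emph{fixed} modulus (of size $\asymp aN$). Now I would apply Lemma~\ref{2017303534623421} a second time to this additively twisted ${\GL}(3)$ sum (the relevant case depending on whether $N$ divides the new modulus; the new Kloosterman sums are $O(1)$). Since $\beta$ is non-oscillatory — of ``conductor'' $\asymp P$ — while the modulus is $O(1)$ and the scale $\asymp\alpha^3\sqrt X$ is large, the dual sum is supported on a range of length $\ll P^3/(\alpha^3\sqrt X)$, which is empty when $P\ll\alpha X^{1/6}$; as $\widetilde F$ is cuspidal there is no polar main term, and the whole sum is $O(X^{-100})$. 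Hence the \emph{leading} contribution to the original sum is negligible, and its true size is governed entirely by the lower-order corrections accumulated en route: the $j\ge2$ terms of Lemma~\ref{3453464lljyrdfv432}, the lower-order stationary-phase terms, the transition ranges where $y_0(n)$ is within $O(X^{-\varepsilon})$ of an endpoint of $[1/2,5/2]$, and — when $P\gg\alpha X^{1/6}$ — the non-empty second dual sum. Bounding each of these with the Rankin--Selberg estimate $\sum_{n\le Y}|A_{\widetilde F}(1,n)|\ll Y^{1+\varepsilon}$ and the rapid decay furnished by the non-stationary phases (the factor $P$ entering through the derivatives of $W$ and the boundary layers, the $\log X$ through the $O(\log X)$ dyadic blocks) yields the asserted bound $ZP(1+\alpha\log X)$.

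The hard part is this last step: carrying out the double Vorono\u{\i}--stationary-phase iteration with uniform control in $n$, $\alpha$ and the parameters $Z,P$, and in particular handling the transition regions where the stationary point collides with the endpoints of the support, so as to land precisely on $ZP(1+|\alpha|\log X)$ rather than a weaker power of $X$. This bookkeeping is worked out in \cite{RY}, to which I defer for the details.
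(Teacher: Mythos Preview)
The paper does not prove this lemma at all: it merely records the statement and cites \cite[Theorem~3]{RY} (together with the remarks after \cite[Eqn.~(3.18)]{RY}). Your proposal outlines precisely the Ren--Ye argument --- a first application of the ${\GL}(3)$ Vorono\u{\i} formula with trivial modulus, a stationary-phase analysis that turns the phase into the \emph{rational} additive twist $-4n/(\alpha^2 N)$ (this is where $\alpha^2\in\Q$ is used), and a second Vorono\u{\i} step that kills the main term --- and you too defer the bookkeeping to \cite{RY}. So you and the paper are doing the same thing; you have simply written out more of what lies behind the citation.
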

\subsection{Petersson trace formula} For any integers $m,n\ge 1$, we set  
\[\Delta _{k,M}(m,n) = \sum_{f\in \mathcal{B}_k(M)}\omega^{-1}_f \psi_f(m)\psi_f(n)\]
and 
\[\Delta^\ast _{k,M}(m,n) =\sum_{f\in \mathcal{B}^\ast_k(M)}\omega^{-1}_f  \lambda_f(m)\lambda_f(n),\]
where the spectral weights $\omega_f$ are given by
$\omega_f:=\f{(4 \pi)^{k-1}}{\Gamma(k-1)}  \langle   f,f \rangle $. In particular, if $f\in \mathcal{B}^\ast_k(M)$, one has $\omega_f =\f{(k-1)M}{2 \pi^2}L(1,\text{sym}^2 f),$ which satisfies that
$ k^{1-\varepsilon} M\ll \omega_f \ll k^{1+\varepsilon} M $
for any $\varepsilon>0$; see \cite[Lemma 2.5]{ILS}.
 \begin{lemma} \label{20191001} (Petersson trace formula) We have 
 \[ \Delta_{k,M}(m,n)=\delta(m,n)+2 \pi i^{-k} \sum_{c>0} \f{S(m,n;cM)}{cM}J_{k-1} \lf (\f{4 \pi  \sqrt{mn}}{cM}\ri),\] 
where $\delta(m,n)=1$ if $m = n$, and $\delta(m,n)= 0$ otherwise.
\end{lemma}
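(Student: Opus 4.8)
The plan is to prove this as the classical Petersson trace formula for the Hecke congruence group $\Gamma_0(M)$ via the Poincar\'e series method: I will compute the Fourier coefficients of a suitable Poincar\'e series in two different ways and match the outcomes. To that end, for a positive integer $m$ introduce the holomorphic weight-$k$, level-$M$ Poincar\'e series
\[
P_m(z)=\sum_{\gamma\in\Gamma_\infty\backslash\Gamma_0(M)}(cz+d)^{-k}\,e(m\gamma z),
\]
where $\gamma$ runs over coset representatives with bottom row $(c,d)$ and $\Gamma_\infty$ denotes the stabiliser of the cusp $\infty$. For even $k\ge 4$ this series converges absolutely and locally uniformly and represents an element of $\mathcal{S}_k(M)$; for the weight $k=2$ one first attaches the convergence factor $|cz+d|^{-2s}$, carries out all the computations below for $\mathrm{Re}(s)$ large, and continues analytically to $s=0$ at the end.

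First I would read off the Fourier expansion $P_m(z)=\sum_{n\ge1}p_m(n)e(nz)$ \emph{directly} from the definition. Splitting the coset space according to the bottom-left entry $c$, the coset of the identity contributes the single term $e(mz)$, while the cosets with $c\neq0$ (which forces $M\mid c$, $c>0$) organise, upon summation over the admissible residues of $d$, into Kloosterman sums $S(m,n;c)$, the remaining archimedean integral being the standard one that evaluates to a Bessel function. This yields
\[
p_m(n)=(n/m)^{(k-1)/2}\left[\delta(m,n)+2\pi i^{-k}\sum_{c>0}\frac{S(m,n;cM)}{cM}\,J_{k-1}\!\left(\frac{4\pi\sqrt{mn}}{cM}\right)\right].
\]

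Next I would compute the same coefficients \emph{spectrally}. Since $P_m\in\mathcal{S}_k(M)$, we may expand $P_m=\sum_{f\in\mathcal{B}_k(M)}\langle f,f\rangle^{-1}\langle P_m,f\rangle\,f$, and the inner product is evaluated by unfolding against $\Gamma_\infty\backslash\mathbb{H}$: the integration in $x$ picks out $\overline{\psi_f(m)}\,m^{(k-1)/2}e^{-4\pi my}$, and the integration in $y$ contributes $\int_0^\infty e^{-4\pi my}\,y^{k-2}\,dy=\Gamma(k-1)(4\pi m)^{-(k-1)}$, whence $\langle P_m,f\rangle=\Gamma(k-1)(4\pi m)^{-(k-1)}\,\overline{\psi_f(m)}\,m^{(k-1)/2}$. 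Recalling $\omega_f=(4\pi)^{k-1}\Gamma(k-1)^{-1}\langle f,f\rangle$ and comparing the coefficient of $e(nz)$ then gives $p_m(n)=(n/m)^{(k-1)/2}\sum_{f\in\mathcal{B}_k(M)}\omega_f^{-1}\,\overline{\psi_f(m)}\,\psi_f(n)$. Equating the two formulas for $p_m(n)$ and cancelling the common factor $(n/m)^{(k-1)/2}$ (which equals $1$ on the $\delta(m,n)$-term, the only term where it matters) leaves
\[
\sum_{f\in\mathcal{B}_k(M)}\omega_f^{-1}\,\overline{\psi_f(m)}\,\psi_f(n)=\delta(m,n)+2\pi i^{-k}\sum_{c>0}\frac{S(m,n;cM)}{cM}\,J_{k-1}\!\left(\frac{4\pi\sqrt{mn}}{cM}\right).
\]

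Finally, in the trivial-nebentypus setting the orthogonal basis $\mathcal{B}_k(M)$ may be chosen to consist of forms with real Fourier coefficients (the left-hand side above is in any case independent of the choice of orthogonal basis, being the $(m,n)$-th Fourier coefficient of the Bergman reproducing kernel), so that $\overline{\psi_f(m)}=\psi_f(m)$ and the left-hand side is exactly $\Delta_{k,M}(m,n)$; this is the asserted identity. The one genuinely delicate point is the weight $k=2$, where the Poincar\'e series is not absolutely convergent and the whole argument must be run through Hecke's trick with the extra factor $|cz+d|^{-2s}$ followed by analytic continuation in $s$; by contrast, the exact evaluation of the archimedean integral producing precisely the constant $2\pi i^{-k}$ and the Bessel function $J_{k-1}$ of argument $4\pi\sqrt{mn}/(cM)$, as well as the construction of a real orthogonal basis, are by now entirely standard and are carried out in detail in \cite{IK} or \cite{ILS}.
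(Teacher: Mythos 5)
The paper does not supply a proof of this lemma; it is stated as the classical Petersson trace formula (with the sum over $c$ ranging over multiples of the level $M$) and is taken as a known input. Your Poincar\'e-series derivation is the standard textbook argument, and it is correct: the geometric Fourier expansion of $P_m$ (identity coset giving $\delta(m,n)$, nontrivial cosets with $M\mid c$ giving the Kloosterman--Bessel sum) is matched against the spectral expansion via unfolding, with the factor $\omega_f^{-1}$ emerging exactly as you compute. The two small points you flag are indeed the ones that require care and you handle them appropriately: (i) for $k=2$ the series is only conditionally convergent and Hecke's trick with the factor $|cz+d|^{-2s}$ is needed, and (ii) the spectral side naturally produces $\overline{\psi_f(m)}\psi_f(n)$, and since the paper's definition of $\Delta_{k,M}$ omits the conjugate one must invoke the existence of a real orthogonal basis in the trivial-nebentypus case (or equivalently the basis-independence of the reproducing kernel). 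Nothing is missing.
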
For the purpose of our application, we need an expression for $\Delta^\ast_{k,M}$ instead of $\Delta_{k,M}$.  Suppose that $m,n$ are positive integers satisfying $(m,M)=1$, we have the following identity (see \cite[Eqn. (2.51)]{ILS}):
\begin{lemma} \label{201910000011111}We have 
\[\Delta^\ast _{k,M}(m,n)=\sum_{RS=M}\f{\mu(S)}{L \nu((m,S))} \sum_{l| S^\infty } l^{-1} \sum_{l^2_1|(n, \,Sl_1)} \mu(l_1)l_1  \Delta_{k,R}\lf (m l^2,\f{n}{l^2_1} \ri ) ,\]
where $\nu(M)=[\Gamma(1):\Gamma(M)]=M\prod_{p|M}(1+p^{-1})$.
\end{lemma}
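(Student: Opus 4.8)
This is precisely \cite[Eqn. (2.51)]{ILS}; the plan is to reproduce the Iwaniec--Luo--Sarnak sieving argument, which passes from the full Hecke eigenbasis to the newform subset. Since $M$ is square-free, $\mathcal{S}_k(M)$ decomposes as an orthogonal direct sum, over divisors $d\mid M$ and over newforms $g\in\mathcal{B}^\ast_k(d)$, of the old-classes $\mathcal{V}_g=\mathrm{span}\{g(\ell z):\ell\mid M/d\}$. The first step is, for each such $g$, to compute the Gram matrix $G^{(g)}$ of the spanning set $\{g(\ell z)\}_{\ell\mid M/d}$: because $M/d$ is square-free this matrix is a tensor product over the primes $p\mid M/d$ of $2\times2$ blocks (indexed by $\{1,p\}$) whose entries are rational functions of $p$ and $\lambda_g(p)$, computed from $\langle g(z),g(pz)\rangle$ and $\langle g(pz),g(pz)\rangle$ at level $dp$ via the Hecke relations. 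In particular $G^{(g)}=\langle g,g\rangle\,\widetilde G^{(g)}$, where $\widetilde G^{(g)}$ depends only on the $\lambda_g(p)$ with $p\mid M/d$; its invertibility (equivalently, linear independence of the old-shifts) follows from multiplicity one together with the Deligne bound $|\lambda_g(p)|\le2<p^{1/2}+p^{-1/2}$.

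Next, using the basis-independent identity $\sum_{f}\|f\|^{-2}\psi_f(m)\psi_f(n)=\sum_{i,j}(G^{-1})_{ij}\psi_{v_i}(m)\psi_{v_j}(n)$ for an orthogonal basis and any spanning set $\{v_i\}$ with Gram matrix $G$, I would compute the contribution of each old-class $\mathcal{V}_g$ to $\Delta_{k,M}(m,n)=\sum_{f\in\mathcal{B}_k(M)}\omega_f^{-1}\psi_f(m)\psi_f(n)$. The scalar $\langle g,g\rangle^{-1}$ produced by $(G^{(g)})^{-1}$ merges with the constant $(4\pi)^{k-1}/\Gamma(k-1)$ to reconstitute $\omega_g^{-1}$, leaving a bilinear form in the Fourier coefficients of the $g(\ell z)$ weighted by the rational matrix $(\widetilde G^{(g)})^{-1}$. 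Since $(m,M)=1$, the only $\ell\mid M/d$ dividing $m$ is $\ell=1$, so the $m$-side collapses to $\lambda_g(m)$; on the $n$-side, the entries of $(\widetilde G^{(g)})^{-1}$ are polynomial in the $\lambda_g(p)$, and expanding them and applying Hecke multiplicativity recombines everything into the divisor sums $\sum_{l\mid(M/d)^\infty}l^{-1}\sum_{l_1^2\mid(n,\,(M/d)l_1)}\mu(l_1)l_1(\cdots)$ with arguments $ml^2$ and $n/l_1^2$, as in the statement. Summing over $g\in\mathcal{B}^\ast_k(d)$ converts the inner bilinear form into $\Delta^\ast_{k,d}$-values; this expresses $\Delta_{k,M}$ as an explicit combination of $\{\Delta^\ast_{k,d}\}_{d\mid M}$, and a Möbius inversion over the square-free variable $M/d$ then solves for $\Delta^\ast_{k,M}$, the factors $\nu(\cdot)$ appearing through the congruence-subgroup indices $[\Gamma_0(d):\Gamma_0(M)]$ that relate $\langle g,g\rangle$ to the norm at level $M$.

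The one genuinely delicate part is the bookkeeping of the arithmetic constants: keeping precise track of $\omega_g$ versus $\omega_f$ under the level change, of the $\nu(\cdot)$ factors, and of the exact powers of $l$ and $l_1$ that arise when $(\widetilde G^{(g)})^{-1}$ is expanded and the substitutions $m\mapsto ml^2$, $n\mapsto n/l_1^2$ are performed, together with checking that the hypothesis $(m,M)=1$ is used exactly where the $m$-side is collapsed (otherwise one picks up additional ramified contributions and the weight $\nu((m,S))$ is no longer forced to be $1$). Everything else is routine linear algebra, so I would carry this out by transcribing the computation of \cite[\S2]{ILS}, merely adjusting to the normalization of the spectral weight $\omega_f$ used here.
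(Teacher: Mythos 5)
The paper does not give a proof of this lemma; it is stated as a citation of \cite[Eqn.\ (2.51)]{ILS}, so there is no in-text argument to compare against. Your proposal is a faithful reconstruction of the Iwaniec--Luo--Sarnak sieving argument, which is exactly the source the paper invokes, and the overall structure you describe is correct: decompose $\mathcal{S}_k(M)$ into old-classes $\mathcal{V}_g$ attached to newforms $g$ of sublevel $d\mid M$, handle each class by linear algebra on the old-shifts $\{g(\ell z)\}_{\ell\mid M/d}$, collapse the $m$-side using $(m,M)=1$, then invert over $M/d$. One presentational difference: ILS do not literally invert the Gram matrix via the reproducing-kernel identity you quote; they construct an explicit orthonormal basis $f_d=\sum_{\ell\mid d}\xi_d(\ell)\,g(\ell z)$ with triangular coefficients (their Lemma 2.5 and the formulas for $\xi_d$), which is the Gram--Schmidt form of the same computation. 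The two are equivalent, but if you follow your Gram-matrix route you should correct one imprecision in your sketch: the entries of $(\widetilde G^{(g)})^{-1}$ are \emph{rational}, not polynomial, functions of the $\lambda_g(p)$, since inverting the $2\times 2$ block at $p$ divides by $\det=1-c_p^2$ with $c_p\asymp \lambda_g(p)/(p^{1/2}+p^{-1/2})$; it is precisely the geometric-series expansion of $(1-c_p^2)^{-1}$, permitted by the Deligne bound $|\lambda_g(p)|\le 2<p^{1/2}+p^{-1/2}$, that produces the infinite sum $\sum_{l\mid S^\infty}l^{-1}(\cdots)$ in the statement. With that correction (and carrying out the $\nu$-bookkeeping you flag at the end), your plan does reproduce the cited formula.

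Incidentally, note that the displayed formula in the paper contains a typo inherited in transcription: the factor ``$L$'' in the denominator $\tfrac{\mu(S)}{L\,\nu((m,S))}$ should be the summation variable (i.e.\ $S$ in the paper's notation, matching $L$ in ILS's original $LM=N$ parametrization); as written, ``$L$'' is undefined. You may want to flag this when you write up the argument.
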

\subsection{Bounds on bilinear forms with Kloosterman sums}The bilinear form with Kloosterman sums always plays a vital r\^{o}le in many topics such as the moments of central values of $L$-functions, subconvexity even the classical bilinear exponential sums in additive number theory; see, for example, \cite{KMS,KSWX,Shp} for descriptions in different regards. In the present paper, we will develop the following relevant results to facilitate our use after a while.
\begin{lemma}\label{c452t45t21} \label{201910000011111}Let $X,Y\ge 1$. Let $W$ be a smooth function supported on $ [1/2, 5/2] \times [1/2, 5/2]$ with partial derivatives satisfying $x^i y ^j
 \f{\partial ^i}{\partial x^i} 
  \f{\partial ^j} {\partial y^j} W(x/X, y/Y) \ll_{i,j} Z Z^i_1 Z^j_2  $ for some $Z, Z_1,Z_2>0$.
Let $q\ge 1$ be an integer. For any $h\in \Z$, we have 
\begin{align} \label{xwxwf34}
\sum_{n\ge 1}\sum_{m\ge 1}\f{A_F(m,1)}{\sqrt{m}}  S(nh,m;q)\,W{\lf (\f{n}{X},  \f{m}{Y},\ri)} \ll Z_1Z\sqrt{XYq}+Z_1ZX q^{3/4}(h,q)^{1/4}+Z_1Z q.
\end{align}
\end{lemma}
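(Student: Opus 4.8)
I want to bound the bilinear sum
\[
\mathcal{S} = \sum_{n\ge 1}\sum_{m\ge 1}\frac{A_F(m,1)}{\sqrt{m}}\,S(nh,m;q)\,W\!\left(\frac{n}{X},\frac{m}{Y}\right),
\]
and the natural strategy is to open the Kloosterman sum, perform $\GL(3)$-Voronoï summation on the $m$-variable (using the unramified case of Lemma \ref{2017303534623421} with modulus $q$, since $A_F(m,1)$ is the $\GL(3)$ coefficient twisted by $e(m\overline{a}/q)$), and then analyze the resulting dual sum. An alternative, which is cleaner here because $F$ has rank $3$ and we only have the coefficient $A_F(m,1)$ and $\sqrt{m}$ in the denominator, is to keep things elementary: write $S(nh,m;q)=\sum_{a\bmod q}^{\ast} e\big(\tfrac{anh+\overline{a}m}{q}\big)$, and for each fixed $a$ apply the non-linear exponential sum bound of Lemma \ref{0l5y43} to the $m$-sum, treating $\alpha = $ (something involving $\overline a/q$)? — no, that is a linear phase, not a $\sqrt m$ phase, so Lemma \ref{0l5y43} does not directly apply. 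Hence Voronoï is genuinely needed: applying \eqref{34534vof3534} to $\sum_m A_F(m,1)e(m\overline a/q)W(m/Y)$ turns the additive character into the dual $\GL(3)$ sum whose transform $\Omega_\pm$ is governed by Lemma \ref{3453464lljyrdfv432}, and by \eqref{20202020030000000000} the phase becomes $e\!\left(\pm 3(n_2 n_1^2 Y y/1)^{1/3}/q\right)$ — now genuinely a cube-root phase in the dual variable $n_2$.

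First I would open $S(nh,m;q)$, swap orders, and sum over $m$ by the $\GL(3)$ Voronoï formula \eqref{34534vof3534} (unramified case, since we make no coprimality hypothesis beyond what is implicit; if $(q,?)\ne 1$ issues arise one splits $q$, but the clean statement just needs the analytic input). The $m$-sum of length $Y$ becomes, up to the factor $Y^{2/3}/(q\,N^{1/6})$ and divisor-bounded sums over $n_1\mid q$, a dual sum over $n_2\ll q^3/(n_1^2 Y^{1-\varepsilon})$ of $\overline{A_F(n_1,n_2)}(n_2/n_1)^{-1/3}S(am\overline N,\pm n_2; qm/n_1)$ against an oscillatory integral with phase $\pm 3(n_2 n_1^2 Y y)^{1/3}/q$. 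Next I would execute the $a$-sum (which reassembles a Kloosterman-type sum, collapsing $\sum_a^\ast e(anh/q)$ against the Voronoï output) and then the $n$-sum of length $X$: the $n$-dependence sits only in $e(anh/q)$ and the remaining short Kloosterman sum, so summing over $n$ either produces a congruence condition (diagonal contribution, giving roughly $Z_1 Z X q^{3/4}(h,q)^{1/4}$ after Weil for the residual Kloosterman sum and square-root cancellation in $n_2$) or, off-diagonal, is estimated trivially with the integral's decay confining $n_2$, yielding the $Z_1 Z\sqrt{XYq}$ term; the final $+Z_1 Z q$ is the degenerate range $Y\ll 1$ or the $n_2=0$/boundary contribution handled directly.

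**The main obstacle.** The delicate point is the treatment of the Ramanujan-on-average input for the $\GL(3)$ coefficients $A_F(n_1,n_2)$ in the dual sum combined with getting the exponent on $(h,q)$ exactly $1/4$: one needs to execute the $n_2$-sum with square-root cancellation (via the second-moment bound $\sum_{n_2\le T}|A_F(n_1,n_2)|^2\ll_\varepsilon (n_1 T)^{1+\varepsilon}$) while simultaneously keeping careful track of the modulus $qm/n_1$ in $S(\cdot,\pm n_2;qm/n_1)$ — note $m$ still appears in the modulus, which is the usual annoyance of $\GL(3)$ Voronoï with a non-prime-power modulus. One resolves it by factoring $m = m_1 m_2$ according to $\gcd(m,q)$ and reciprocity on the Kloosterman sum to isolate the $q$-part, whose size controls the $(h,q)$ factor. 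Balancing the three resulting error terms against the lengths $X,Y$ and verifying the stated clean form, with all $Z,Z_1,Z_2$-dependence tracked (the phase has derivative size $\asymp Z_2$ in $m$, feeding into how many times one integrates by parts), is the bookkeeping-heavy heart of the proof; the structural steps themselves are standard "Voronoï + Weil + large-sieve" technology.
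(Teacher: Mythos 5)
The paper's proof is nothing like yours: it is a short reduction, via partial summation, to the bilinear Kloosterman sum bounds of Kerr--Shparlinski--Wu--Xi \cite{KSWX} (which build on Shparlinski \cite{Shp}). Concretely, one writes the double sum as an integral of $\smu_{x\bmod q}\sum_{m}\alpha_{m,t}\,e(m\overline{x}/q)\sum_{n\le t}e(nhx/q)$ with $\alpha_{m,t}=\partial_t W(t/X,m/Y)A_F(m,1)/\sqrt{m}$, and then invokes their Theorem directly; the three terms in the stated bound, in particular $Z_1 Z X q^{3/4}(h,q)^{1/4}$, are exactly what their completion-plus-Weil analysis of bilinear forms with Kloosterman sums produces. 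The $\GL(3)$ Fourier coefficients $A_F(m,1)$ enter only as a sequence satisfying a Rankin--Selberg-type mean-square bound; no automorphic machinery on the $m$-variable is used at all.

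Your proposal, by contrast, applies $\GL(3)$ Voronoï on the $m$-sum after opening the Kloosterman sum, and this is a genuinely different route. I do not think it can be pushed through as sketched, for a few reasons. First, a bookkeeping error: when you apply \eqref{34534vof3534} to $\sum_m A_F(m,1)e(m\overline a/q)W(m/Y)$, the ``frozen'' index in the Voronoï formula is $1$, not your summation variable $m$, so the dual Kloosterman sum has modulus $q/n_1$, not $qm/n_1$; the factorization manoeuvre you propose to handle ``$m$ in the modulus'' is addressing a problem that does not occur, while the real issues go unaddressed. Second, Lemma \ref{2017303534623421} requires either $(cm,N)=1$ or $N\mid c$; the present lemma is stated for an arbitrary integer $q\ge 1$ with no coprimality hypothesis against the level $N$ of $F$, so the Voronoï formula is simply not available in general. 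Third, and most tellingly, the shape of the target bound --- specifically the term $Xq^{3/4}(h,q)^{1/4}$ --- is the signature of completing the $n$-sum and applying Weil to the residual Kloosterman/Gauss sums; it is not the kind of estimate that falls out of a Voronoï dualization, which would instead produce factors like $q$, $Y^{2/3}$, and truncated dual lengths $\sim q^3/Y$. Even granting all the analytic steps, your scheme is balancing different quantities than the ones the lemma records. The economical path is the one the paper takes: regard $A_F(m,1)/\sqrt{m}$ as an arbitrary bounded-on-average sequence and appeal to the general bilinear-form-with-Kloosterman-sums estimate.
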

\begin{proof}This lemma essentially follows from the work of Kerr, Shparlinski, Wu and Xi \cite{KSWX} which appealed to the approach in 
\cite[Theorem 2.1]{Shp}. By partial summation, one finds that
\[\begin{split}&\sum_{n\ge 1}\sum_{m\ge 1}\f{A_F(m,1)}{\sqrt{m}}  S(n,hm;q)W{\lf (\f{n}{X},  \f{m}{Y}\ri)}\\
&\hskip 3cm \ll \int^{5X/2}_{X/2} \lf |\smu_{x\bmod q}\sum_{m\ge 1}\alpha_{m,t} \, e{\lf (\f{m  \overline{x}}{q}\ri)}
\sum_{n\le t}  e{\lf (\f{nhx}{q}\ri)}
 \ri| \ud t,\end{split}\] with $\alpha_{m,t}=\f{\partial}{\partial t} W(t/X, m/Y)A_F(m,1)/\sqrt{m}$. This is compatible with the setting as presented in the beginning of the \cite[Section 5]{Shp}. We still might bound the inner triple sum by $\sum_{\pm} \sum_{1\le i \le I} |S^\pm_i|$, where, for $1\le i\le I$, $S^\pm_i=\sum_{m\le X} \alpha_{m,t} \gamma_x \, e{ ({\pm m \overline{x}}/{q})}$; see \cite[Section 6]{KSWX} for details.
\end{proof}
Observe that the bilinear forms we are concerned about is a smooth-weighted sum. A direct use of the Possion might show some bonus. Indeed, we alternatively have 
\begin{lemma} \label{201910000011svffs111}Keep the notation as in Lemma \ref{c452t45t21}. Then, for any $(h,q)=1$, we have
\begin{align}\label{xwxwf3433}
\sum_{n\ge 1}\sum_{m\ge 1}\f{A_F(m,1)}{\sqrt{m}}  S(nh,m;q)W{\lf (\f{n}{X},  \f{m}{Y}\ri)} \ll Z\sqrt{XYq}+\sqrt{Z_2}ZX q^{3/4} \mathbf{1}_{ Z_2 q>Y}+ ZX\sqrt{Y}.
\end{align}
\end{lemma}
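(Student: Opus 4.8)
The plan is to open up the Kloosterman sum additively, apply Poisson summation in the variable $n$ to exploit the smoothness of $W$, and then treat the resulting sum over the dual variable by separating a main term (the zero frequency) from an off-diagonal term. More precisely, writing $S(nh,m;q)=\sum_{x\bmod q}^{\ast}e((nh\overline{x}+mx)/q)$ and interchanging the order of summation, I would first handle the $n$-sum
\[
\sum_{n\ge 1}e{\lf(\f{nh\overline{x}}{q}\ri)}W{\lf(\f{n}{X},\f{m}{Y}\ri)}
=\f{X}{q}\sum_{r\in\Z}\widehat{W_1}{\lf(\f{X(h\overline{x}/q+r)}{q}\cdot q\ri)}W_2{\lf(\f{m}{Y}\ri)}\,\,(\text{schematically}),
\]
via Poisson summation modulo $q$. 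Since $W$ is smooth with derivatives controlled by $Z,Z_1,Z_2$, the Fourier transform in the first variable decays rapidly, so the effective range of the dual variable is $r$ with $h\overline{x}/q+r\ll Z_1/X$ up to negligible error; because $(h,q)=1$ the congruence $h\overline{x}\equiv -rq\pmod{q}$ pins down $x$, and one is left essentially with the contribution of the single frequency forcing $x\overline{h}\equiv 0$, which is where the main term of size $Z\sqrt{XYq}$ comes from after summing over $m$ trivially with the bound $\sum_{m\sim Y}|A_F(m,1)|/\sqrt m\ll \sqrt Y$.

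The second step is to bound the genuinely oscillatory part. After Poisson, the $m$-sum carries a factor $e(mx/q)$ with $x$ now determined (mod $q$) by $r$, so the remaining object is $\sum_{m}\f{A_F(m,1)}{\sqrt m}e(mx/q)W_2(m/Y)$ for various residues $x\bmod q$; this is a $\GL(3)$ additively twisted sum of length $Y$ and modulus $q$. I would estimate it either trivially by $\sqrt Y$ (when $x\equiv 0$, giving again the main term) or, when $x\not\equiv 0 \bmod q$, by applying $\GL(3)$ Voronoï (Lemma~\ref{2017303534623421}) or directly the square-root-cancellation bound available for such sums, gaining a saving of roughly $\sqrt q/\sqrt{\text{something}}$; the number of relevant residues $x$ is controlled by the constraint $Z_2 q> Y$ coming from the decay of $\widehat{W_2}$, which is exactly what produces the indicator $\mathbf{1}_{Z_2q>Y}$ and the secondary term $\sqrt{Z_2}\,ZXq^{3/4}$. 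The final term $ZX\sqrt Y$ is the pure diagonal / trivial-bound contribution when no cancellation is available.

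The main obstacle I anticipate is bookkeeping the various ranges so that the three terms on the right of \eqref{xwxwf3433} genuinely dominate every piece: one must check that the dual sum over $r$ after Poisson in $n$ truncates at $O(q^\varepsilon)$ terms (using $X\gg 1$ and the rapid decay of $\widehat W$), that the contribution of nonzero $r$ with $x\not\equiv 0$ is handled with the correct power of $q$ — here the clean choice is the trivial bound $\sqrt Y$ on the $m$-sum times $q^{3/4}$ losses from counting, rather than attempting a second Voronoï which would raise the dimension of the Kloosterman sum — and that the $Z$-dependence is tracked correctly through the two Fourier transforms. A minor subtlety is that $W$ is a function of two variables, so Poisson must be applied in only the first slot while the second is frozen; keeping the normalizations $Z_1$ (frequency scale in $n$) and $Z_2$ (frequency scale in $m$) separate throughout is what makes the stated bound sharp. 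Modulo these routine but delicate estimates, the lemma follows by combining the zero-frequency main term with the trivial bound on the off-diagonal.
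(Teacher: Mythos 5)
Your proposal takes a genuinely different route from the paper, but it does not work as sketched. The paper applies Cauchy--Schwarz in $m$ to strip off the $\GL(3)$ coefficients $A_F(m,1)$ entirely (using Rankin--Selberg on one factor), opens the square to get a sum over $n_1,n_2$, and then applies Poisson summation in $m$ modulo $q$. The dual variable $\jmath$ then truncates at $|\jmath|\ll Z_2 q/Y^{1-\varepsilon}$, and the inner object becomes
\[
\sum_{\alpha\bmod q}S(n_1h,\alpha;q)\overline{S(n_2h,\alpha;q)}\,e{\lf(\f{\jmath\alpha}{q}\ri)},
\]
a triple sum of Kloosterman sums, estimated via \cite[Lemma 4.1]{KSWX}; $\jmath=0$ produces $Z\sqrt{XYq}+ZX\sqrt Y$ via the Ramanujan sum, and $\jmath\neq 0$ (which exists only when $Z_2 q>Y$) produces the middle term. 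No $\GL(3)$ Voronoï or square-root cancellation for additively twisted $A_F$-sums enters anywhere, and the $n$-variable is never Poisson-summed.

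Your proposal instead opens the Kloosterman sum, Poissons in $n$, and then proposes to treat the resulting $m$-sums via Voronoï or a square-root bound. There are two concrete problems. First, Poisson in $n$ controls the dual sum by the $n$-frequency scale $Z_1$: the constraint $|h\overline{x}/q - r|\ll Z_1/X$ does not pin down $x$ (the student's claim that ``$(h,q)=1$ pins down $x$'' is incorrect — $h\overline x\equiv 0\pmod q$ is impossible for a unit), but rather leaves a set of $O(1+Z_1 q/X)$ residues. Thus any bound coming out of this route will carry a factor of $Z_1$, whereas the point of Lemma~\ref{201910000011svffs111} — in contrast to Lemma~\ref{c452t45t21} — is precisely that the bound is independent of $Z_1$; this matters in the application (\S3.1.2) where $Z_1=\sqrt{\X\ell}/CM$ is large. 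Second, the indicator $\mathbf{1}_{Z_2 q>Y}$ and the $\sqrt{Z_2}$ in the middle term are artifacts of Poisson in the $m$-direction (where $Z_2$ is the frequency scale); a Poisson in $n$ would produce a $Z_1 q>X$ condition instead and could not manufacture the stated $Z_2$-dependent term. So while the high-level idea (Poisson + zero-frequency main term + bounding the oscillatory remainder) is in the right spirit, the choice of variable for Poisson, and the decision to keep $A_F$ rather than remove it by Cauchy--Schwarz, lead to an answer of the wrong shape.
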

\begin{proof}By switching the orders of the summations and applying the Cauchy-Schwarz inequality followed by the Possion, one deduces that the double sum is controlled by 
\[\begin{split}&\ll  \lf ( \f{Y}{q}\sum_{n_1,n_2\ge 1}\,\sum_{|\jmath|\ll  Z_2q/Y^{1-\varepsilon}}  \Phi(\jmath;n_1,n_2)\sum_{\alpha \bmod q}S(n_1h, \alpha;q) \overline{S(n_2h,\alpha;q) }  \,e{\lf ( \f{  \jmath \alpha }{q} \ri)} \ri )^{1/2},\end{split}\] with $\Phi(\jmath;v_1,v_2)=\int_{\R^+} W(v_1,t )W(v_2,t )e(-\jmath t/q) \ud t$ for any $v_1,v_2\in \mathbb{R}^+$ and $\jmath\in \mathbb{R}$. It is verifiable that the zero-frequence shows a quantity $Z\sqrt{XYq}+XZ\sqrt{Y}$; while, if $Z_2 q>Y$, the non-zero frequences, however, will contribute the quantity $\sqrt{Z_2}Z q^{3/4}\sqrt{Y}$ by invoking the estimate for triple sums of Kloosterman sums; see, for example, \cite[Lemma 4.1]{KSWX}.
\end{proof}
It is remarkable that, as it can be seen in \eqref{xwxwf3433}, the upper-bound becomes smaller, as $m,n$ vary in the ranges with relatively small heights.
\subsection{Bessel functions} For any $x\ge 1$ and $k\in \mathbb{N}^+$, usually one has the crude bound for the Bessel function $J_{k}(x)$ that
\bee \label{9909909800099}J_{k}(x)\ll {\min {\lf (x^{k},  x^{-1/{2}}\ri)}} ;\ene see, for example, \cite[Proposition 8]{HM}. 
For any $x\in \R$, one might also write
\bee \label{c4c254c} J_{k-1}(x)=e(x)\mathscr{W}^+_{x}(s)+e(-x) \mathscr{W}^-_{k}(x),\ene with\[
\mathscr{W}^\pm_{k}(x)=\f{e^{ \mp \lf (k/2+1/{4}\ri)\pi i}  }{\sqrt{2\pi x}\,\Gamma\lf(k+1/{2}\ri)}\int_{\R^+}e^{-t}\lf ( x\lf ( 1\pm \f{it}{2x}\ri)\ri)^{k-1/{2}}\ud t
,\]
which satisfy that \bee \label{20172002} x^j{\mathscr{W}^\pm_{k}}^{(j)}(x) \ll_{j,k} \min\lf(\fone{\sqrt{x}}, x^{k-1}\ri) \ene for any $j\ge 0$; see, for example, \cite[Sections 16.12, 17.5 \& 16.3]{WW}. Now, for any $h$, a smooth bump function supported on $[1/2, 5/2]$ with bounded derivatives, we define
\bee \label{x24ro226787}\widecheck{h}(t) = \f{1}{\sqrt{2 \pi}} \int_{\R^+} \f{h(\sqrt{u})}{\sqrt{u}}\, e^{itu}\ud u. \ene We thus have (see \cite[Corollary 8.2]{ILS}):
\begin{lemma} \label{201910000t666011svffs111}For any $K>0$ and $x>0$, there holds that
\bee \label{xwx0987wf34331}
\sum_{k \equiv 0 \bmod 2 }i^{-k} h{\lf (\f{k-1}{K}\ri)}J_{k-1}(x)=-\f{K}{2\sqrt{x}}\, \text{Im}{\lf ( e{\lf ( \f{x}{2\pi}-\fone{8}\ri)}\ri)} \widecheck{h} {\lf ( \f{K^2}{2x}\ri)}+O{\lf (\f{x}{K^4}\ri)},
\ene 
where the implied constant depends only on $h$.
\end{lemma}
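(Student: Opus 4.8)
The plan is to deduce Lemma~\ref{201910000t666011svffs111} from the uniform asymptotic expansion \eqref{c4c254c} of $J_{k-1}$ together with a Poisson-type evaluation of the resulting $k$-sum. First I would insert \eqref{c4c254c}, writing
\[
\sum_{k \equiv 0 \bmod 2} i^{-k} h{\lf(\f{k-1}{K}\ri)} J_{k-1}(x)
= e(x)\sum_{k} i^{-k} h{\lf(\f{k-1}{K}\ri)}\mathscr{W}^+_{k}(x)
+ e(-x)\sum_{k} i^{-k} h{\lf(\f{k-1}{K}\ri)}\mathscr{W}^-_{k}(x).
\]
Using the integral representation of $\mathscr{W}^\pm_{k}$, I would interchange the $t$-integral with the $k$-sum and exploit the factor $\bigl(x(1\pm it/2x)\bigr)^{k-1/2}$, whose $k$-dependence is $\exp\bigl((k-1/2)\log(x(1\pm it/2x))\bigr)$, i.e.\ purely exponential in $k$. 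Combined with $i^{-k}=e(-k/4)$ and the phase $e^{\mp(k/2+1/4)\pi i}$ coming from $\mathscr{W}^\pm_k$, the $k$-sum becomes $\sum_{k\equiv 0(2)} h((k-1)/K)\, e(\beta k)$ for an appropriate $\beta=\beta(x,t)$. Applying Poisson summation in $k$ (over the arithmetic progression $k\equiv 0\bmod 2$), the dominant term corresponds to the frequency that makes the phase stationary, producing the factor $\widecheck{h}$ evaluated at $K^2/(2x)$ after a change of variables $u\leftrightarrow t$ matching the definition \eqref{x24ro226787}; the remaining frequencies are negligible and, after estimating the tails via repeated integration by parts (using the bounds \eqref{20172002} on the derivatives of $\mathscr{W}^\pm_k$), contribute the error $O(x/K^4)$.

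The bookkeeping that extracts the main term is where most of the care is needed. After Poisson, the leading contribution is a single oscillatory integral in $t$ (or, after substitution, in $u$) of the shape $\int_{\R^+} \frac{h(\sqrt u)}{\sqrt u} e^{i u\cdot (\text{something})}\,du$; I would Taylor-expand $\log(x(1\pm it/2x)) = \log x \pm it/2x + O(t^2/x^2)$ inside the exponential and check that the linear term reproduces exactly the kernel $e^{itu}$ with the correct normalization, while the quadratic and higher terms, together with the $\sqrt{2\pi x}\,\Gamma(k+1/2)$ normalizing denominator (handled by Stirling, since $k\asymp K$), are absorbed either into the main term's shape or into acceptable error. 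The two pieces $\mathscr{W}^+$ and $\mathscr{W}^-$ combine via $e(x)(\cdots) + e(-x)(\overline{\cdots})$ into the single imaginary part $\mathrm{Im}\bigl(e(x/2\pi - 1/8)\bigr)$ displayed in \eqref{xwx0987wf34331}, with the $-1/8$ tracking the constant phases $e^{\mp\pi i/4}$ and $i^{-k}$-related shifts.

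The main obstacle, as I see it, is controlling uniformity in the range of $x$ relative to $K$: the asymptotic \eqref{c4c254c} and the derivative bounds \eqref{20172002} are cleanest when $x$ is large, and the transition regime $x\asymp K^2$ — precisely where the argument $K^2/(2x)$ of $\widecheck h$ is of order $1$ and the Poisson stationary point is genuinely active — requires that the error terms from non-stationary frequencies and from the Taylor tails both beat $x/K^4$ without any spurious loss. I would handle this by keeping the full phase (rather than only its linear approximation) in a stationary-phase analysis of the $t$-integral, so that the saddle at $t\asymp K^2/x$ is captured exactly, and by noting that for $x \ll K^{2-\varepsilon}$ the function $\widecheck h(K^2/2x)$ is itself negligible (by rapid decay of $\widecheck h$, since $h$ is smooth) so the claimed formula degenerates harmlessly, while for $x\gg K^{2+\varepsilon}$ the oscillation in $k$ is fast enough that Poisson immediately gives the stated shape. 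Since this is exactly \cite[Corollary~8.2]{ILS}, I would in practice quote it, but the sketch above is the route I would reconstruct if needed.
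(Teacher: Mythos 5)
The paper offers no proof here at all; it simply cites \cite[Corollary 8.2]{ILS}, and your proposal ultimately does the same, so there is no mismatch in approach. Your reconstruction sketch is plausible and correctly flags uniformity near $x \asymp K^{2}$ as the delicate point, but one detail is misattributed: $\widecheck h\!\left(K^{2}/(2x)\right)$ does not come from a stationary \emph{nonzero} Poisson frequency — after $i^{-k}$ (with $k$ even) cancels the linear phase $e^{-i(k-1)\pi/2}$ from the leading Bessel asymptotic, the zero frequency dominates, and the $\widecheck h$ factor is the Fresnel integral $\tfrac{2}{\sqrt{2\pi}}\int h(v)\,e^{iK^{2}v^{2}/(2x)}\,dv$ generated by the quadratic chirp $e^{i(k-1)^{2}/(2x)}$ in the \emph{second}-order uniform expansion of $J_{k-1}$, with the $O(x/K^{4})$ error reflecting the same sign-cancellation of the $O(xK^{-3}|h'''|)$ term that survives in the untwisted \cite[Proposition 8.1]{ILS} (the paper's Lemma~\ref{20191000s111}).
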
 This shows that the oscillatory range for $J_{k-1}(x)$ is roughly $x\gg k^{2+\varepsilon}$, when averaging over the weight $k$; within this range, $J_{k-1}(x)$ obeys the asymptotic behavior as shown in \eqref{xwx0987wf34331}. The function $\widecheck{h}$ behaves like a Schwarz function, and satisfies that ${\widecheck{h}}^{(j)}(x)\ll (1+|x|)^{-A}$ for any large $A$ and $j>0$ by repeated integration by parts sufficiently many times. On the other hand, if there does not exist the a factor $i^k$ in \eqref{xwx0987wf34331}, one, however, has (see \cite[Proposition 8.1]{ILS}):
\begin{lemma} \label{20191000s111}For $\iota =0,2$ and any $K>0$ and $x>0$, there holds that
\begin{align} \label{xwx0987wf3433}\begin{split}
4\sum_{k \equiv \iota  \bmod 4 }h{\lf (\f{k}{K}\ri)}J_{k-1}(x)=&h{\lf (\f{x}{K}\ri)}  -i^{-\iota}\f{K}{\sqrt{x}} \, \text{Im}{\lf ( e{\lf ( \f{x}{2\pi}-\fone{8}\ri)}\ri)} \widecheck{h} {\lf ( \f{K^2}{2x}\ri)}\\
&+O{\lf (\f{x}{K^3}\lf |h^{(3)}{\lf (\f{x}{K}\ri)}\ri |\ri)} +O{\lf (\f{x}{K^4}\ri)}, \end{split}
\end{align} where the implied constants depend only on $h$.
\end{lemma}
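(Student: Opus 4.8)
The plan is to detect the residue of $k$ modulo $4$ by additive characters and then evaluate the resulting sums by combining the Bessel integral representation with Poisson summation and a stationary phase analysis, following the strategy of Iwaniec--Luo--Sarnak used for the companion identity in Lemma~\ref{201910000t666011svffs111}. Writing $\mathbf 1_{k\equiv\iota\,(4)}=\tfrac14\sum_{a\bmod 4}i^{a(k-\iota)}$, one gets
\[
4\sum_{k\equiv\iota\,(4)}h\!\lf(\f kK\ri)J_{k-1}(x)=\sum_{a\bmod 4}i^{-a\iota}\,T_a,\qquad
T_a:=\sum_{k\in\Z}i^{ak}\,h\!\lf(\f kK\ri)J_{k-1}(x),
\]
where the extension of the $k$-sum to all of $\Z$ is harmless because $h$ is supported in $[1/2,5/2]$, so only $k\asymp K$ contribute. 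Into each $T_a$ I would insert $J_{k-1}(x)=\tfrac1{2\pi}\int_{-\pi}^{\pi}e^{i((k-1)\tau-x\sin\tau)}\,\ud\tau$ and interchange the absolutely convergent sum and integral, getting
\[
T_a=\f1{2\pi}\int_{-\pi}^{\pi}e^{-i\tau-ix\sin\tau}\Bigl(\sum_{k\in\Z}h\!\lf(\f kK\ri)e^{ik(\tau+a\pi/2)}\Bigr)\ud\tau .
\]

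Applying Poisson summation to the inner sum replaces it by $K\sum_{m\in\Z}\widehat h\bigl(K(m-\tfrac{\tau+a\pi/2}{2\pi})\bigr)$, with $\widehat h$ the ordinary Fourier transform of $h$. Since $\widehat h$ is of rapid decay, the integrand is negligible unless $\tau$ lies within $O(K^{-1+\varepsilon})$ of a point $2\pi m-a\pi/2$, which inside one period localizes $\tau$ near $0$ when $a=0$, near $\pm\pi$ when $a=2$, and near $\mp\pi/2$ when $a=1,3$ --- and the latter two are precisely the stationary points of the Bessel phase $x\sin\tau$. For $a=2$ one expands $\sin\tau$ around $\pm\pi$, uses periodicity to bring those points into the interior, recombines the two windows, and the localized integral collapses to $\int_{\R}e^{-2\pi i(x/K)u}\widehat h(u)\,\ud u=h(-x/K)$, which vanishes because $h$ is supported on $\R^+$; hence $T_2\ll_A K^{-A}$. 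For $a=0$ the same manoeuvre around $\tau=0$ gives the leading term $h(x/K)$, and carrying the expansions $\sin\tau=\tau-\tfrac{\tau^3}{6}+\cdots$ and $e^{-i\tau}=1-i\tau+\cdots$ one further order produces exactly the error terms $O\bigl(\tfrac{x}{K^3}|h^{(3)}(x/K)|\bigr)$ and $O(x/K^4)$ recorded in \eqref{xwx0987wf3433}.

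It remains to evaluate $i^{-\iota}T_1+i^{\iota}T_3$. Near $\tau=\pm\tfrac\pi2$ the phase expands as $x\sin\tau=\pm x\mp\tfrac x2(\tau\mp\tfrac\pi2)^2+\cdots$ and the amplitude $e^{-i\tau}$ equals $\mp i$ there, so after rescaling the critical window to size $O(1/K)$ each of $T_1,T_3$ becomes an explicit unimodular constant (built from $e^{\mp ix}$ and $\mp i$) times a Fresnel-type integral $\int_{\R}e^{\pm 2\pi^2 i x u^2/K^2}\widehat h(u)\,\ud u$. Completing the square, invoking the Fresnel integral and substituting $v=t^2$ turns this into $\tfrac{K}{2\sqrt x}e^{\pm i\pi/4}\,\widecheck h(\mp K^2/2x)$ in the notation \eqref{x24ro226787}; since $\widecheck h(-t)=\overline{\widecheck h(t)}$ for real $h$, the two contributions are complex conjugate and add up to $-\tfrac{K}{\sqrt x}\,\mathrm{Im}\bigl(\e{\tfrac x{2\pi}-\tfrac18}\,\widecheck h(\tfrac{K^2}{2x})\bigr)$, the collapse of the factors $i^{-a\iota}$ ($a=1,3$) to $i^{-\iota}$ accounting for the prefactor $-i^{-\iota}$ in \eqref{xwx0987wf3433}. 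Together with the $T_0$ contribution this yields the stated identity, the tails of all $\tau$-integrals away from the critical windows and the nonzero Poisson frequencies (both dispatched by repeated integration by parts using the decay of $\widehat h$) being absorbed into $O(x/K^4)$. I expect the genuine difficulty to lie entirely in this last step: because one needs an asymptotic identity rather than a mere bound, one must retain enough terms in the expansions of $\sin\tau$ and of the slowly varying amplitude near $\tau=\pm\tfrac\pi2$ to pin down the unimodular phase and the exact argument of $\widecheck h$, and then confirm that every term discarded is genuinely $O(x/K^4)$; by comparison the cases $a=0,2$ and the crude tail estimates are routine.
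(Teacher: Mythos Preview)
The paper gives no proof of this lemma; it simply quotes \cite[Proposition~8.1]{ILS}. Your sketch is precisely the Iwaniec--Luo--Sarnak argument (Bessel integral representation, Poisson in $k$, localisation of $\tau$ to $0,\pm\pi,\pm\pi/2$ according to the residue $a\bmod 4$, and Fresnel evaluation at the stationary points), so there is nothing to compare beyond noting that you have supplied what the paper merely cites. The outline is sound and your identification of the only delicate point --- tracking the unimodular constants and the argument of $\widecheck{h}$ near $\tau=\pm\pi/2$ with enough care to land on exactly $-i^{-\iota}\frac{K}{\sqrt{x}}\,\mathrm{Im}\bigl(\e{\tfrac{x}{2\pi}-\tfrac18}\widecheck h(\tfrac{K^2}{2x})\bigr)$ --- is accurate; the $a=0,2$ cases and the tail estimates are indeed routine.
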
 
\begin{rem}\label{cf34334t48033}Observe that the denominator $2 \pi $ in both exponential factors of \eqref{xwx0987wf34331} and \eqref{xwx0987wf3433} usually looks inconspicuous in practical use. However, it plays a miraculous r\^{o}le in this paper, and would make it possible for us to apply the square-root estimate in Lemma \ref{0l5y43}. Otherwise, one cannot obtain the Lindelöf average bound in \eqref{009koieu3} whatsoever; see \eqref{x3r3209c323114} and \eqref{kjci34c42} below for details. \end{rem}
\section{Proofs of Theorem \ref{3243251515} and Corollary \ref{corodoe923}}In  this section, we are ready to deduce Theorem \ref{3243251515}, by which one can further show Corollary \ref{corodoe923}. 
\subsection{Proof of Theorem \ref{3243251515}}Let $q$ be a prime and $M$ is also, satisfying that $(q,M)=1$. For any $f\in \mathcal{B}^\ast_k(M)$ and $F$ being a normalized $\text{GL}(3)$ Hecke-Maa\ss\ form of level $q^2$ with trivial nebentypus, we will consider the first moment of the central value $L(1/2,F\otimes f )$ in the levels aspects. Recall that $\mathcal{Q}_{F,f}\asymp M^3q^4 $. The standard functional equation method (see, for example, \cite[Theorem 5.3]{IK}) implies that the central value $L(1/2,F\otimes f)$ hinges on the following two quantities\[ \begin{split}&\sum_{n\ge 1} \f{A_F(n,1)\lambda_f(n)}{\sqrt{n}} U{\lf (\f{n}{q^2M^{3/2}}\ri)} \quad  \text{and}\quad \varepsilon(F\otimes f)\sum_{n\ge 1} \f{A_{\widetilde{F}}(n,1)\lambda_f(n)}{\sqrt{n}} V{\lf (\f{n}{q^2M^{3/2}}\ri)},\end{split} \] up to two smooth weight functions $U, V$ satisfying $U^{{j}}(x), {V}^{{j}}(x)\ll (1+|x|)^{-A}$ for any $j>0$ and sufficiently large $A>0$. Let $L\ge 1$ be a parameter to be optimized later, satisfying that \bee \label{3x22r43} L< M^{1/8} \ene  and 
 \bee \label{3x22r431}   L\ge  \f{q^{1+\varepsilon}} {{M^{1/4}} } , \quad L\ge  \f{{M} ^{1/4}}{{q^{1-\varepsilon}} } \ene for any $\varepsilon>0$. We now define a set of modulo 
\bee \label{nxr4300509454}\mathscr{P}_L=\{p : p \text{ is prime, }L\le p\le 2L, (p,qM)=1\} .\ene Let  $f_0\in \mathcal{B}^\ast_k(M)$. By equipping with the amplifier (as in \cite[Section 12.2]{BK19}) 
\bee \label{3c5vt24yv}\mathfrak{A}_f=\sum_{j=1}^2 \lf| \sum_{\ell \in \mathscr{P} _L}  \lambda_f(\ell^j) x(\ell^j) \ri|^2, \ene with $x(n)={\text{sgn}}(\lambda_{f_0}(n))$, one attempts to estimate an amplified first moment 
\[\sum_{f\in \mathcal{B}^\ast_k(M)}\omega^{-1}_f  \mathfrak{A}_f L(1/2, F\otimes f).\]
It is clear that, by the relation $\lambda^2_{f_0}(p)-\lambda_{f_0}(p^2)=1$, one sees that \bee\label{x414rx310993}
\mathfrak{A}_{f_0} \gg\fone{2} \lf (\sum_{\ell \in \mathscr{P}_L} |\lambda_{f_0}(\ell)|+|\lambda_{f_0}(\ell^2)|^2\ri)^2\gg \f{ L^2}{\log L}.\ene If one opens the square and applies
multiplicativity of Hecke eigenvalues, finally our main focus is on
\bee \label{3cr24t2t42vc}\sum_{f\in \mathcal{B}^\ast_k(M)}\omega^{-1}_f  \lambda_f(\ell) L(1/2, F\otimes f),\ene in view of the fact that 
\begin{align}\label{cq4rr3r3}\mathfrak{A}_{f}=\sum_{\ell \in \mathscr{P}_L} \lf (x^2(\ell) +x^2(\ell^2) \ri) &+ \sum_{\ell_1,\ell_2 \in \mathscr{P}_L}  x(\ell^2_1) x(\ell^2_2)\lambda_f(\ell^2_1\ell^2_2)\notag \\
&+ \sum_{\ell_1,\ell_2 \in \mathscr{P}_L}  \lf ( x(\ell_1) x(\ell_2)+x(\ell^2_1) x(\ell^2_2)\mathbf{1}_{\ell_1=\ell_2} \ri) \lambda_f(\ell_1\ell_2).\end{align}Notice that, here and thereafter, \bee \label{x4cr31466}\ell =\{1,\ell_1\ell_2, \ell^2_1\ell^2_2\},\ene with $\ell_1,\ell_2\in \mathscr{P}_L$, is either 1 or a square-free integer or a square satisfying the coprimalities with the parameters $q$ and $M$, upon recalling \eqref{cq4rr3r3}.

 To proceed, let us recall that the root number $\varepsilon(F\otimes f)=\pm  \prod_{p|q}\varepsilon(\Pi_{F,p})^2$ according to the foregoing descriptions in \S 2.3. In the $`+'$ case, one finds that, the quantity in \eqref{3cr24t2t42vc} is boiled down to the analysis of the following sum
\bee \label{x33} 
\sum_{f\in \mathcal{B}^\ast_k(M)}\omega^{-1}_f \lambda_f(\ell) \sum_{n\ge 1} \f{A_F(n,1)\lambda_f(n)}{\sqrt{n}} U{\lf (\f{n}{\mathcal{X}}\ri)},
\ene
with $ \mathcal{Q}^{(1-\delta)/2}_{F,f} \le  \X\le \mathcal{Q}^{(1+\varepsilon)/2}_{F,f}$ for any small constants $\delta,\varepsilon>0$. Here, $U$ is a smooth bump function supported $[1/2, 5/2]$, identical to one for $x\in [1,2]$, with derivatives satisfying that $x^j U^{(j)}\lf ({x}/{\X} \ri) \ll_j 1$ for any $j\ge 0$.

 By the Cauchy-Schwarz inequality, together with the Rankin-Selberg bounds for the second moments of Fourier coefficients, the trivial estimate for the double sum in \eqref{x33} is $\ll_\varepsilon \X^{1/2+\varepsilon}$. This is far away from the bound $ \ll \X^{1/2+\varepsilon}/M$ leading to the convexity. The task is thus reduced to estimating \eqref{x33} further. Our strategy next is the Petersson trace formula. To set up for our application of this, we trivially write the expression \eqref{x33} above as  \bee \label{209093094000994}\sum_{\substack{n\ge 1\\(n,M)=1}}   \f{A_F(n,1)  }{\sqrt{n}}\Delta^\ast_{k,M}(n,\ell) U {\lf (\f{n}{\mathcal{X}} \ri)}+\sum_{\substack{n\ge 1\\M|n}}   \f{A_F(n,1) }{\sqrt{n}}\Delta^\ast_{k,M}(n,\ell) U {\lf (\f{n}{\mathcal{X}} \ri)}.\ene It is straightforward to verify that, by the fact $|\lambda_f(M)|= M^{-1/2}$, the second term is bounded by $\ll_\varepsilon \sqrt{\mathcal{X}}/M^{3/2}$. For $(n\ell,M)=1$, notice that, by Lemma \ref{201910000011111} one has the following relation 
\[ \Delta^\ast_{k,M}(n,\ell)=\Delta_{k,M}(n,\ell)-\fone{M}\sum_{\rho |M^\infty} \fone{\rho }\Delta_{k,1}{\lf(n, \ell \rho^2\ri)},\]
 so that one may plug the second term on the right-hand side above into \eqref{209093094000994} showing that  \[\fone{M}\sum_{\substack{\rho|M^\infty \\ \rho >1}} \fone{\rho}\sum_{\substack{n\ge 1\\(n,M)=1}}\f{A_F(n,1)\lambda_f(n)}{\sqrt{n}}\Delta_{k,1}{\lf(n,\ell \rho ^2 \ri)}  U {\lf (\f{n}{\X} \ri)}\ll_\varepsilon \f{\sqrt{\X}}{M^{2-\varepsilon}}.\]
This reduces \eqref{x33} to
\bee \label{99949989099949043909}  \sum_{n\ge 1} \f{A_F(n,1) }{\sqrt{n}}\lf \{ \Delta_{k,M}(n,\ell) -\fone{M}\Delta_{k,1}(n,\ell) \ri \} U {\lf (\f{n}{X} \ri)} \ene
plus an acceptable error $O_\varepsilon {(\X^{1/2+\varepsilon}/M^{3/2})}$. Here, the first term in the braces should be viewed as an average over all forms of level $M$. The second, as an average over the old forms, however, will not contribute significantly compared with the newforms. Indeed, in this case one finds that \[\begin{split}&\fone{M} \sum_{n\ge1} \f{A_F(n,1)}{\sqrt{n} }  \Delta_{k,q}(n,1)  U {\lf (\f{n}{X}\ri)}
=\fone{M}  \sum_{f^\star \in \mathcal{B}^\ast_{k}(1)} \omega^{-1}_{f^\star}\sum_{n\ge1} \f{A_F(n,1)\lambda_{f^\star }(n)}{\sqrt{n} }   U {\lf (\f{n}{X}\ri)},\end{split}\]
which is $\ll q^{1+\varepsilon}/M$ by the convexity
bound $O(q^{1+\varepsilon})$ for the Rankin-Selberg $L$-function $ L(s, F\otimes f^\star)$. Our job thus reduces to
 \bee \label{3c23t252}
\sum_{n\ge 1} \f{A_F(n,1) }{\sqrt{n}}\Delta_{k,M}(n,1) +O{\lf(\f{\sqrt{\X}}{M^{3/2-\varepsilon}}  +\f{q^{1+\varepsilon}}{M}\ri)}.\ene

Now, we are ready to apply the Petersson trace formula 
  (Lemma \ref{20191001}) to $\Delta_{k,M}$; this produces the diagonal and off-diagonal terms of the form $\mathcal{O}(M,q)+2\pi i^{k} \mathcal{S}(M,q)$, where  \bee \label{49999499999909149}\mathcal{O}(M,q)= \sum_{n=1}\f{A_F(n,1)\lambda_f(n)}{\sqrt{n}}\delta(n,\ell)U {\lf (\f{n}{\mathcal{X}} \ri)}\ene which is $O(\ell^{-1/2+\varepsilon})$, and \bee \label{900999990004949991}\mathcal{S}(M,q)=\sum_{n\ge 1}\f{A_F(n,1 )}{\sqrt{n}} U {  \lf (\f{n}{\mathcal{X}} \ri)   }\sum_{c\ge 1}\frac{ S(n,\ell;cM)}{cM} J_{k-1}\lf (\f{4\pi \sqrt{n\ell}}{cM}\ri). \ene
 Noticing that we are taking $k$ sufficiently large throughout the paper, one finds that the sum over $c$ can be truncated at $c\ll \sqrt{\mathcal{X}\ell }/M^{1-\varepsilon}$; the contribution in the complementary range is negligibly small (i.e., $O_A(\X^{-A})$ for any sufficiently large $A$), with the help of the bound \eqref{9909909800099}. It is instructive to see that this sum of $c$ is not vacuous by the second pre-condition in \eqref{3x22r431}; otherwise, one cannot succeed in obtaining hybrid subconvexity). In particular, whenever $\ell=1$, it requires that $q\ge M^{1/4+\varepsilon}$ (so that $\sqrt{\X}\gg M^{1+\varepsilon}$ for any $\varepsilon>0$), otherwise the display \eqref{900999990004949991} is negligibly small; we will henceforth assume this pre-condition once we are considering the case of $\ell=1$ in the following paragraphs.

 By using the dyadic subdivision, we further attempt to estimate the multiple sum for \eqref{900999990004949991} as follows
\bee \label{3r34j66633s}\sup_{C\ll \sqrt{\mathcal{X}\ell }/M^{1-\varepsilon}} \sum_{n\ge 1}\f{A_F(n,1 )}{\sqrt{n}} \sum_{c\sim C}\frac{ S(n,\ell;cM)}{cM} \widetilde{U}_\ell\lf (\f{n}{\mathcal{X}}\ri) ,\ene  where ${U}_\ell\lf (y\ri)=U(y)J_{k-1}\lf ({4\pi \sqrt{y \mathcal{X}\ell}}/{CM}\ri)$ for any $y\in \R^+$. According to the properties of the $J$-Bessel functions in \S2.1, it follows that \bee \label{xv565656} y^i \ell ^j
 \f{\partial ^i}{\partial y^i} 
  \f{\partial ^j} {\partial \ell^j} U_{\ell}(y)\ll \lf (  \f{\sqrt{\X L}}{CM}  \ri)^{i-1/2}  \ene for any $i,j\ge 0$.

  Next, we prepare to apply the Vorono\u{\i} summation formula, Lemma \ref{2017303534623421}. Before that, one needs to detect the coprimality between $\ell$ and the modulus of the twist $cM$. Recall that $\ell =\{1,\ell_1\ell_2, \ell^2_1\ell^2_2\}$ given as in \eqref{x4cr31466}. 
  
  \emph{Case I: $\ell=\ell_1\ell_2$.} To begin with, let us consider what happens whenever $\ell=\ell_1\ell_2$. It is verifiable that the only counterexample with $( c,\ell)\neq 1$ occurs, provided that $q>M^{1/4+\varepsilon}$, and in which case, $c=\ell_1c^\prime$ or $\ell_2c^\prime$ for some $c^\prime \ll \sqrt{\X}/M^{1-\varepsilon}$, with $(c^\prime, \ell_1\ell_2M)=1$ by first one of the restrictions in \eqref{3x22r431}. If so, one finds that $S(n, \ell_1\ell_2; \ell_1 c^{\prime} M)=-S(n,\ell_2 \overline{\ell_1};c^{\prime} M)$, provided that $(n, \ell_1)=1$, and $S(n, \ell_1\ell_2; \ell_1 c^{\prime} M)=(\ell_1-1)S(n/\ell_1,\ell_2;c^{\prime} M)$ otherwise, where $\ell_1|n$. Our object \eqref{3r34j66633s} is essentially degenerated into two terms 
  \begin{align} 
  \fone{L}&\sup_{C\ll \sqrt{\mathcal{X} }/M^{1-\varepsilon}} \sum_{n\ge 1}\f{A_F(n,1 )}{\sqrt{n}} \sum_{c^\prime\sim C}\frac{ S(n,\ell_2 \overline{\ell_1};c^\prime M)}{c^\prime M} \widetilde{U}_\ell\lf (\f{n}{\mathcal{X}}\ri)  \label{x4c42bjjde3j},\\
\f{A_F(\ell_1,1)}{\sqrt{L}}&\sup_{C\ll \sqrt{\mathcal{X} }/M^{1-\varepsilon}} \sum_{n\ge 1}\f{A_F(n,1 )}{\sqrt{n}} \sum_{c^\prime\sim C}\frac{ S(n,\ell_2 ; c^\prime M)}{c^\prime M} \widetilde{U}_\ell\lf (\f{n}{\mathcal{X}/L}\ri)    \label{x4c42b62jjj};
  \end{align} whilst, the expression of \eqref{3r34j66633s} we are concerned about is exactly 
\bee\label{00dp30r4t}\sup_{C\ll \sqrt{\mathcal{X}}L/M^{1-\varepsilon}} \sum_{n\ge 1}\f{A_F(n,1 )}{\sqrt{n}} \sum_{c\sim C}\frac{ S(n,\ell_1\ell_2;cM)}{cM} \widetilde{U}_\ell\lf (\f{n}{\mathcal{X}}\ri) .\ene It suffices to evaluate \eqref{00dp30r4t} with $(c,\ell_1\ell_2)=1$, observing that the expression \eqref{00dp30r4t} bears some resemblances to that in \eqref{x4c42bjjde3j} and \eqref{x4c42b62jjj}. By a similar argument, the two possibly-existing quantities \eqref{x4c42bjjde3j} and \eqref{x4c42b62jjj} will provide a relative less significant contribution. For the sake of having a more clear understanding of the two quantities above for readers, we re-affirm that these two terms will not come into being, whenever $q<M^{1/4}$. 

 \emph{Case II: $\ell=\ell^2_1\ell^2_2$.} Now, we turn to examining that the case of $\ell$ being a square in \eqref{3r34j66633s}. Observe that, the (possibly occurred) scenarios where $( c,\ell)\neq 1$ are as follows:
\begin{itemize}
\item [(a)]  $\ell=\ell^2_1\ell^2_2, c =c^\ast \ell_1 $ or $c^\ast \ell_2 $ for some $c^\ast\ge 1$, such that $ (c^\ast,\ell_1\ell_2M)=1$ and $c^\ast\ll \sqrt{\X}L/M^{1-\varepsilon}$;
\item [(b)] $\ell=\ell^2_1\ell^2_2, q>M^{1/4+\varepsilon}$, and $c =\hat{c} \ell_1\ell_2 $  for some $\hat{c}  \ge 1$, satisfying that $ (\hat{c} ,\ell_1\ell_2M)=1$ and $\hat{c}\ll \sqrt{\X}/M^{1-\varepsilon}$.
\end{itemize}
Notice that, here, $\ell^2_1\ell_1^2|c $ and $\ell^2\ell^2_2|c $ can not occur, in view of the first constraint in \eqref{3x22r431}. By a similar argument as before, one finds that, in those two cases, (a) and (b), the sum in \eqref{3r34j66633s} is thus degenerated into 
\begin{align} \f{1}{L} &\sup_{C\ll \sqrt{\mathcal{X} }L/M^{1-\varepsilon}} \sum_{n\ge 1}\f{A_F(n,1 )}{\sqrt{n}} \sum_{c^\ast\sim C}\frac{ S(n,\ell^2_2 ;c^\ast  M)}{
c^\ast M} \widetilde{U}_\ell\lf (\f{n}{\mathcal{X}}\ri)\label{xr2fkkd6},\\ \f{A_F(\ell_1,1)}{\sqrt{L}} &\sup_{C\ll \sqrt{\mathcal{X} }L/M^{1-\varepsilon}} \sum_{n\ge 1}\f{A_F(n,1 )}{\sqrt{n}} \sum_{c^\ast\sim C}\frac{ S(n,\ell_1\ell^2_2 ;c^\ast M)}{
\hat{c}M} \widetilde{U}_\ell\lf (\f{n}{\mathcal{X}/L}\ri),\label{xr2fkkd8}\end{align}
and 
\begin{align} 
 \f{1}{L^{2}} &\sup_{C\ll \sqrt{\mathcal{X} }/M^{1-\varepsilon}} \sum_{n\ge 1}\f{A_F(n,1 )}{\sqrt{n}} \sum_{\hat{c}\sim C}\frac{ S(n,1 ;\hat{c} M)}{
\hat{c}M} \widetilde{U}_\ell\lf (\f{n}{\mathcal{X}}\ri)\label{xr2fkkd},
\\
\f{A_F(\ell_1\ell_2,1)}{L}&\sup_{C\ll \sqrt{\mathcal{X} }/M^{1-\varepsilon}} \sum_{n\ge 1}\f{A_F(n,1 )}{\sqrt{n}} \sum_{\hat{c} \sim C}\frac{ S(n,\ell_1\ell_2;\hat{c} M)}{\hat{c} M} \widetilde{U}_\ell\lf (\f{n}{\mathcal{X}/L^2}\ri) , \label{xr2fkkd2}
\end{align} respectively. Recall that, in {\emph{Case II}}, the sum \eqref{3r34j66633s} is explicitly of the following form
\bee \label{vo53653}\sup_{C\ll \sqrt{\mathcal{X}}L^2/M^{1-\varepsilon}} \sum_{n\ge 1}\f{A_F(n,1 )}{\sqrt{n}} \sum_{c\sim C}\frac{ S(n,\ell^2_1\ell^2_2;cM)}{cM} \widetilde{U}_\ell\lf (\f{n}{\mathcal{X}}\ri) .\ene
On account of the similarities between the expression in \eqref{vo53653} and that in \eqref{xr2fkkd6}--\eqref{xr2fkkd2}, it suffices for us to handle the sum \eqref{vo53653} with $(c,\ell_1\ell_2)=1$, akin to {\emph{Case I}}. In a similar vein, one can deal with the four sub-sums \eqref{xr2fkkd6}--\eqref{xr2fkkd2} in the degenerate case, and the contributions from them are dominated by that of \eqref{vo53653}. Upon combining with this and the argument in {\emph{Case I}} as above, our goal eventually is boiled down to estimating \eqref{3r34j66633s} with $(c, \ell)=1$ in the following stage.

  Having seen that the $n$-sum of \eqref{3r34j66633s} is already in a shape for an application of the the Vorono\u{\i}, upon using \eqref{34534vof3534}, one might reduce \eqref{3r34j66633s} essentially to
\bee \label{newvoi1}\begin{split} \sup_{C\ll \sqrt{\mathcal{X}\ell }/M^{1-\varepsilon}} \f{CMq}{\sqrt{\mathcal{X}}}& \sum_{c\sim C}\sum_{\substack{\pm , m,r\\m \ge 1\\
r|cM}}  \f{\overline{A_{F}(m,r)}}{mr^2 } \, e\lf (\mp \f{mr^2 \overline{\ell q^2}}{cM}\ri)\, \Omega_{\pm}\lf ({\f{mr^2 \mathcal{X}}{(cM)^3q^2};{U}_\ell}\ri)
  .\end{split}\ene Here, we have applied the estimate involving Ramanujan sum for each modulus $c$ that $
S(n,0;c)=\sum_{ab=c}\mu(a)\sum_{\beta \bmod c}e{\lf ({\beta n}/{c} \ri) }$. One should notice that the additive twist does not {\emph{collude}} with the level anymore, namely, there holds that $(cM,q)=1$; this, however, is guaranteed by the pre-condition in \eqref{3x22r43}, which confines that $c<q$\footnote[3]{Indeed, on account of the final restrictions for the parameter $L$ (see \eqref{c4r1t55} below), the pre-condition, here, seems dispensable (as far as the establishment of effective ranges for subconvexity is concerned).}.  By the asymptotic formula, Lemma \ref{3453464lljyrdfv432}, for the integral transform $\Omega_{\pm}$, we are further reduced to the following superiorem problem that
\bee \label{ooo9944}\begin{split} \mathcal{S}(M,q) \ll \sup_{1\le C\ll \sqrt{\mathcal{X}\ell }/M^{1-\varepsilon}} \Psi (C; \ell, M, q) ,\end{split}\ene  with 
  \[\Psi (C; \ell, M, q)= \f{\X^{1/6}}{C M q^{1/3}}\,
\sum_{c\sim C}\sum_{\substack{\pm , m,r\\m \ge 1\\
r|c M}}  \f{\overline{A_{F}(m,r)}}{m^{1/3}r^{2/3} } \, e\lf (\mp \f{mr^2 \overline{\ell q^2}}{c M}\ri)\, \widetilde{{U}_\ell}\lf ({\f{mr^2 \mathcal{X}}{(c M)^3q^2}}\ri).
  \]
Here, $\widetilde{{U}_\ell}(x)=\int_{\R^+} U_{\ell }(y)e(\pm 3(xy)^{1/3})\ud y$ for any $x\in \R^+$. By the feature \eqref{c4c254c}, it can be expressed roughly as
\bee \label{c35yub4u444}\lf ( \f{\sqrt{\X \ell}}{CM}\ri)^{-1/2}\int_{\R^+} U(y)F^+_k{\lf ( \f{4\pi \sqrt{y \mathcal{X}\ell}}{CM} \ri)} \,e{\lf( \f{4\pi \sqrt{y \mathcal{X}\ell}}{CM} \pm 3(xy)^{1/3} \ri)}\ud y\ene for any $x\in \R^+$, where $F^
+_k(t)=\sqrt{2\pi t}\, \mathscr{W}^
+_k(t)$ for any $t\in \R$, satisfying that $t^j {F^
+}^{(j)}_k(t) \ll _{j,k} 1$ for any $t\gg 1$ by \eqref{20172002}.
\subsubsection{Non-oscillatory case}
In this subsection, we would firstly proceed our argument in the situation where the weight $U_\ell(y)$ is of the phase roughly ``1''. In other words, we start by working with $C$ localizing at $C\asymp \sqrt{\X \ell}/M^{1-\varepsilon} $ in \eqref{ooo9944}. Our target thus simplifies into
\[\begin{split} \sup_{C\asymp \sqrt{\mathcal{X}\ell }/M^{1-\varepsilon}}  \f{\X^{1/6}}{C M q^{1/3}}
& \sum_{c\sim C}\sum_{\substack{\pm , m,r\\m \ge 1\\
r|c M}}  \f{\overline{A_{F}(m,r)}}{m^{1/3}r^{2/3} }  \, e\lf (\mp \f{mr^2 \overline{\ell q^2}}{c M}\ri)\, \widetilde{{U}_\ell}\lf ({\f{mr^2 \mathcal{X}}{(c M)^3q^2}}\ri)
  .\end{split}\]
One finds that $\widetilde{U}^{(j)}_\ell\ll X^\varepsilon $ for any $j>0$, by repeated integration by parts many times. It thus follows that one may 
truncate the variable $m$ at \[m\ll \X^\varepsilon+q^2(CM) ^3/\mathcal{X}^{1-\varepsilon} \ll q^2 \sqrt{\mathcal{X}} \ell^{3/2+\varepsilon}, \] at the cost of a negligible error. Our next strategy is to make use of the reciprocity law, followed by employing the \text{GL}(3)-Vorono\u{\i}; this would lead to a dual form of \eqref{3r34j66633s}. To be precise, one writes 
\[e\lf ( \f{mr^2 \overline{\ell q^2}}{c M}\ri)=e\lf ( -\f{mr^2 \overline{c M}}{\ell q^2 }\ri)e\lf ( \f{mr^2}{c M\ell q^2}\ri).\] Observe that the harmonic $e\lf ( {mr^2}/{c M\ell q^2}\ri)$ is automatically {\emph{flat}}. Indeed, this is not surprising, since that all of the variables in the Kloosterman sums always lie in the ``Linnik range''. It thus suffices to evaluate 
\bee \label{newvoi2}\begin{split}\sup_{C\asymp\sqrt{\mathcal{X}\ell }/M ^{1-\varepsilon}}  \f{\X^{1/6}}{C M q^{1/3}}
& \sum_{c\sim C}\sum_{\substack{\pm , m,r\\m \ge 1\\
r|c M}}  \f{\overline{A_{F}(m,r)}}{m^{1/3}r^{2/3} } \,  e\lf (\pm \f{mr^2 \overline{c M}}{\ell q^2}\ri)\, \widetilde{{U}_\ell}\lf ({\f{mr^2 \mathcal{X}}{(c M)^3q^2}}\ri)
  .\end{split}\ene
For any given $\mathscr{M}\ge 1$, we introduce a smooth function $\eta_{\mathscr{M}}(m) $ such that $\sum_{\mathscr{M}\ge 1} \eta_{\mathscr{M}}(m)= 1$, where $\eta_{\mathscr{M}}(m) $
is supported on $m\sim \mathscr{M}$, with $\mathscr{M}$ running through the powers of 2 independently, and
satisfies that $\mathscr{M}^j  \eta_{\mathscr{M}}^{(j)} (n)\ll \X^\varepsilon$ for any $j\ge 0$. Thus, if now one defines \[ {\mathcal{W}_{\pm, r,\ell}}{\lf ({\rho}, \nu\ri)}=\eta_{\mathscr{M}}(\rho \mathscr{M})\, \widetilde{U}_\ell\lf ({\f{\rho \mathscr{M} r^2 \mathcal{X}}{(\nu C  M)^3q^2}}\ri)  \]for any ${\rho}, \nu\in \R^+$, then we can recast \eqref{newvoi2} as 
\begin{align}\label{3c42r653} &\sup_{C\asymp\sqrt{\mathcal{X}\ell }/ M^{1-\varepsilon}} \,  \f{\X^{1/6+\varepsilon}}{C M q^{1/3}} 
\,\sup_{\substack{r\le CM\\(r, \ell q)=1}}  \fone{r^{2/3}  }\, \sup_{\mathscr{M}\ll q^2(C M)^3/r\X^{1-\varepsilon}}  \lf |\mathcal{K}_{\pm, \ell}( \mathscr{M} , C,r)\ri |
,\end{align} where
\bee \label{d3d2109dmd}\mathcal{K}_{\pm, \ell} ( \mathscr{M} , C,r)=\sum_{c\sim C}\sum_{\substack{\pm , m \ge 1 }}  \f{\overline{A_{F}(m,r)}}{m^{1/3} }  \, e\lf (\pm \f{mr^2 \overline{c  M}}{\ell q^2}\ri)\, {\mathcal{W}_{\pm, r,\ell}} {\lf (\f{m}{\mathscr{M}};\f{c}{C}\ri)}\ene

We will merely consider the $`+'$ case in the following; the case $`-'$ can be handled in an entirely analogously manner. Observe that the coprimality relation $(r^2\overline{c M},\ell q^2)=1$ has already held by the prior analysis. We now resort to the Vorono\u{\i} formula in the ramified case, \eqref{34534vof35355}, which further converts the double sum $\mathcal{K}_{\pm, \ell} ( \mathscr{M} , C,r)$ essentially into
\[\begin{split}&\f{\ell q^2 }{\mathscr{M}^{1/3}}\sum_{n^\prime |\ell q^2r}\, \sum_{c\sim C} \sum_{\pm, n\ge 1} \f{A_F(n^\prime, n)}{n^\prime n} \,S(r \overline{r^2} cM, \pm n; \ell q^2 r/n^\prime) \,\Omega_\pm {\lf (\f{n{n^\prime }^2 \mathscr{M}}{ \ell ^3 q^6 r}; {\mathcal{W}_{\pm, r,\ell}}
\ri) } ,\end{split}\] where $\overline{r}$ is such that $r\overline{r} \equiv 1 \bmod {\ell q^2}$. Via Lemma \ref{3453464lljyrdfv432}, this, however, is controlled by the upper-bound of
\bee \label{xe1433}\begin{split}& \f{\mathscr{M}^{1/6}}{q\sqrt{\ell r} } \sum_{n^\prime |\ell q^2r}\, \sum_{c\sim C} \sum_{\pm, n\ge 1} \f{A_F(n^\prime, n)}{\sqrt{n}} \, S(r \overline{r^2}  cM, \pm n; \ell q^2 r/n^\prime) \, \widetilde{\mathcal{W}_{\pm, r,\ell}} {\lf (\f{n{n^\prime }^2 \mathscr{M}}{ \ell^3 q^6 r};\f{c}{C}\ri)} ,\end{split}\ene 
 where the integral $\widetilde{\mathcal{W}_{\pm, r,\ell}} $ is given by
\[ \widetilde{\mathcal{W}_{\pm, r,\ell}}(v;\iota ) =\int_{\R^+} {\mathcal{W}_{\pm, r,\ell}} {\lf (y;\iota \ri)}\, e {\lf (   \pm 3(vy)^{1/3}\ri)}\ud y\] for any $v,\iota \in \R^+$. Appealing to \eqref{xv565656}, one temporarily
sees that $ \f{{\partial ^i}  {\partial ^j}}{{\partial v^i}  {\partial \iota^j} } \, \widetilde{\mathcal{W}_{\pm, r,\ell}}(v;\iota) \ll_{i,j} \X^\varepsilon$ for any $i,j,
\varepsilon>0$.

Observe that, unlike other scenarios where one only cares about what happens while the parameter $\mathscr{M}  $ varying around its height by directly adding a smooth weight after the Cauchy-Schwarz (see, for example, \cite{Mu1,HL} for relevant details), we need to optimize $\mathcal{K}_{\pm, \ell} ( \mathscr{M} , C,r)$ for any $1\le \mathscr{M}\ll q^2(CM)^3/r\X^{1-\varepsilon}$ in \eqref{3c42r653}. Next, we will proceed the argument by distinguishing two different situations.

---\emph{When $\mathscr{M}$ is small.} We first consider the case where $\mathscr{M}$ is small such that $1 \le \mathscr{M}\le \mathscr{M}^
\ast$, say, with $\mathscr{M}^
\ast\ll q^2(C M)^3/r\X^{1-\varepsilon}$. Recall \eqref{d3d2109dmd}. We then have 
\bee \label{23er3xr31t31}
\mathcal{K}_{\pm, \ell} ( \mathscr{M} , C,r) \ll  (\mathscr{M}^
\ast)^{2/3} r^{7/32} C^{1+\varepsilon}.
\ene Here, we have used the estimate $|A_F(m,n)|\ll (mn)^{7/32+\varepsilon}$, which follows from the best record towards to the generalized Ramanujan conjecture for ${
\rm{GL}(2)}$ due to Kim and Sarnak; see \cite[Proposition 2]{KS}.

---\emph{When $\mathscr{M}$ is large.} Next, let us turn to the case where $\mathscr{M}$ is suitably large, with \bee \label{kx233r4} \mathscr{M}^
\ast< \mathscr{M}\ll \f{q^2(C M)^3}{r\X^{1-\varepsilon}} .\ene We are in a position to rely on the estimates for bilinear Kloosterman sums securing further cancellations in \eqref{xe1433}. It turns out that the quantity we are really concerned about is the following 
\[\begin{split}& \sup_{\mathscr{M}^\ast \le \mathscr{M}\ll \f{q^2(CM)^3}{r\X^{1-\varepsilon}}} \f{\mathscr{M}^{1/6}}{q\sqrt{\ell r} } \sum_{n^\prime |\ell q^2r}\, \sum_{c\sim C} \sum_{\pm, n\ge 1} \f{A_F(n^\prime, n)}{\sqrt{n}} \, S(r \overline{r^2}  cM, \pm n; \ell q^2 r/n^\prime) \, \widetilde{\mathcal{W}_{\pm, r,\ell}} {\lf (\f{n{n^\prime }^2 \mathscr{M}}{ \ell^3 q^6 r};\f{c}{C}\ri)} .\end{split}\]
It is necessary that $
{ \ell^3 q^6 } \gg {(C M)^3 q^2} /\X^{1-10\varepsilon} $, with $ C\asymp \sqrt{\mathcal{X}\ell }/ M^{1-\varepsilon}$, so that the sum over $n$ is not vacuous. Now, an application of Lemma
\ref {c452t45t21}, with $Z=\X^\varepsilon$ and $Z_1=Z_2=1$, reveals that the display above is dominated in the magnitude by
 \begin{align*}&\ll  \sup_{\mathscr{M}^\ast \le \mathscr{M}\ll q^2(CM)^3/r\X^{1-\varepsilon}}  \sum_{n^\prime |\ell q^2r}\,  \f{\mathscr{M}^{1/6+\varepsilon} (n^\prime)^{7/32}}{q\sqrt{\ell r } }  \lf \{r(\ell q^2)^2 \sqrt{\f{C}{ (n^\prime)^3 \mathscr{M}}} 
  \right.\\
&\phantom{=\;\;}\left.   \hskip5.3cm+\lf(\f{\ell q^2 r}{n^\prime} \ri)^{3/4} (r \overline{r^2}  M, \ell q^2r/n^\prime)^{1/4}C+\f{\ell q^2 r}{n^\prime}\ri \}\\
 &\ll \f{ \sqrt{Cr}(\ell q^2)^{3/2}}{{\mathscr{M}^\ast }^{1/3}}+  \f{\sqrt{Cp} (qr)^{1/3+\varepsilon}}{\X^{1/6}}  
 \lf [q\sqrt{\ell }+ (\ell q^2)^{1/4}C \ri].\end{align*}
 Combining with \eqref{23er3xr31t31}, it follows that 
 \[
\mathcal{K}_{\pm, \ell} ( \mathscr{M} , C,r) \ll  (\mathscr{M}^
\ast)^{2/3} r^{7/32} C^{1+\varepsilon}+\f{ \sqrt{Cr}(\ell q^2)^{3/2+\varepsilon}}{{\mathscr{M}^\ast }^{1/3}}+  \f{\sqrt{Cp} (qr)^{1/3+\varepsilon}}{\X^{1/6}}  
 \lf [q\sqrt{\ell }+ (\ell q^2)^{1/4}C \ri].
\]Upon choosing 
\bee \label{x3r109x}\mathscr{M}^\ast = \f{(\ell q^2)^{3/2}}{\sqrt{C}r^{9/32}},\ene it turns out that
\[
\sup_{\mathscr{M}\ll q^2(C M)^3/r\X^{1-\varepsilon}}  \mathcal{K}_{\pm, \ell}( \mathscr{M} , C,r) \ll  \ell q ^2 C^{2/3+\varepsilon} r^{1/32}+ \f{\sqrt{CM} (qr)^{1/3+\varepsilon}}{\X^{1/6}}  
 \lf [q\sqrt{\ell }+ (\ell q^2)^{1/4}C \ri].\]Inserting this expression into \eqref{3c42r653}, we obtain 
\[\sup_{C\asymp \sqrt{\mathcal{X}\ell }/ M^{1-\varepsilon}} \,   \f{\X^{1/6}}{C M q^{1/3}} \,  \,\sup_{\substack{r\le CM\\(r, \ell q)=1}}  \fone{r^{2/3}  }\,\lf ( \ell q^2  C^{2/3} r^{1/32}+ \f{\sqrt{CM} (qr)^{1/3+\varepsilon}}{\X^{1/6}}  
 \lf [q\sqrt{\ell }+ (\ell q^2)^{1/4}C \ri] \ri ).
\]
Observes that the contribution from the portion for $r$ being large (such that $r\gg \X^\delta,\delta>0 $) is much lesser than that from the dominated case of $r=1$ in the expression above. It suffices to concentrate on this case, and we thus are able to deduce
\begin{lemma} For any $\ell $ given as in \eqref{x4cr31466}, which satisfies that\bee \label{werwgwre}
\ell \ge \f{M^{1/4}}{q^{1-\varepsilon}}, \quad { \ell^3 q^6}\ge \f{(C M)^3 q^2} {\X^{1-\varepsilon }}, \quad 1\ll \f{ (\ell q^2)^{3/2}}{\sqrt{C} }\le  \f{q^2(C M)^3}{\X^{1+\varepsilon}},\ene with $ C\asymp \sqrt{\mathcal{X}\ell }/ M^{1-\varepsilon}$, we have
\bee\label{xw25y} \Psi (C; \ell, M, q) \ll \f{ \ell q^{5/3} \X^{1/6+\varepsilon}}{M}+q^{1+\varepsilon}\sqrt{\f{\ell }{ M}}+
 \f{\ell^2\X^{1/4+\varepsilon}\sqrt{q}}{M}.
\ene
\end{lemma}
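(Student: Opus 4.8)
The plan is to assemble the estimates built up over this subsection and carry out the final substitution $C\asymp\sqrt{\X\ell}/M^{1-\varepsilon}$ of the non-oscillatory regime. Recall that, after the reciprocity law, the ramified Vorono\u{\i} formula \eqref{34534vof35355}, Lemma \ref{3453464lljyrdfv432}, and the dyadic decomposition in $m$, the quantity $\Psi(C;\ell,M,q)$ has been reduced to the expression \eqref{3c42r653}, i.e.\ to bounding $\sup_{1\le\mathscr{M}\ll q^2(CM)^3/(r\X^{1-\varepsilon})}|\mathcal{K}_{\pm,\ell}(\mathscr{M},C,r)|$ uniformly in $r$. First I split the $\mathscr{M}$-range at the threshold $\mathscr{M}^\ast$ of \eqref{x3r109x}: on $1\le\mathscr{M}\le\mathscr{M}^\ast$ the Weil bound together with the Kim--Sarnak estimate $|A_F(m,n)|\ll(mn)^{7/32+\varepsilon}$ gives \eqref{23er3xr31t31}, while on $\mathscr{M}^\ast<\mathscr{M}\ll q^2(CM)^3/(r\X^{1-\varepsilon})$ I feed \eqref{xe1433} into the bilinear Kloosterman-sum estimate of Lemma \ref{c452t45t21} (taking $Z=\X^\varepsilon$, $Z_1=Z_2=1$, with $(r\overline{r^2}M,\ell q^2r/n^\prime)$ bounded by powers of $r$). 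Balancing these two bounds against each other yields a combined estimate for $\sup_{\mathscr{M}}\mathcal{K}_{\pm,\ell}(\mathscr{M},C,r)$.

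Next I insert this combined estimate back into \eqref{3c42r653}, observe that both the divisor sum over $n^\prime\mid\ell q^2r$ and the sum over $r$ are dominated by their $n^\prime=r=1$ contributions, and conclude that $\Psi(C;\ell,M,q)$ is controlled by
\[
\f{\X^{1/6}}{CMq^{1/3}}\lf(\ell q^{2}C^{2/3}+\f{\sqrt{CM}\,q^{1/3+\varepsilon}}{\X^{1/6}}\lf[q\sqrt{\ell}+(\ell q^{2})^{1/4}C\ri]\ri).
\]
The remaining step is elementary exponent arithmetic. The first two summands here are monotone decreasing in $C$, so the bare inequality $C\ge1$ already bounds them by $\ell q^{5/3}\X^{1/6+\varepsilon}/M$ and $q^{1+\varepsilon}\sqrt{\ell/M}$, the first two terms of \eqref{xw25y}; for the third summand I substitute the explicit value $C\asymp\sqrt{\X\ell}/M^{1-\varepsilon}$, turning it into $\ll\X^{1/4+\varepsilon}\ell^{1/2}q^{1/2}/M$, which is absorbed by $\ell^2\X^{1/4+\varepsilon}\sqrt{q}/M$ merely because $\ell\ge1$. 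It then remains only to note that the three hypotheses in \eqref{werwgwre} are precisely what make the foregoing reductions legitimate: the first keeps the $c$-sum from the Petersson formula non-empty, the second keeps the dual $n$-sum after the ramified Vorono\u{\i} non-empty (this is where the value $C\asymp\sqrt{\X\ell}/M^{1-\varepsilon}$ is used), and the third places $\mathscr{M}^\ast$ inside the admissible window $[1,\,q^2(CM)^3/(r\X^{1-\varepsilon})]$, so that the small-/large-$\mathscr{M}$ dichotomy is genuine.

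I expect the substantive part to be the large-$\mathscr{M}$ estimate rather than the bookkeeping: all of the power-saving in \eqref{xw25y} over the trivial bound originates in the square-root-type cancellation of the bilinear Kloosterman sums in Lemma \ref{c452t45t21}, and the delicate point is that this cancellation must beat the Kim--Sarnak bound \eqref{23er3xr31t31} across the \emph{entire} admissible range of $\mathscr{M}$, which is exactly why the split is taken at the crossover value \eqref{x3r109x} and why the three constraints of \eqref{werwgwre} assume the shape they do. Everything downstream of that balancing is routine.
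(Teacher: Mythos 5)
Your proposal is correct and follows the paper's own argument: the split of $\mathscr{M}$ at the threshold \eqref{x3r109x}, the trivial bound on \eqref{d3d2109dmd} for small $\mathscr{M}$, the bilinear Kloosterman estimate of Lemma \ref{c452t45t21} applied to \eqref{xe1433} for large $\mathscr{M}$, balancing, and then the specialization $r=n'=1$, $C\asymp\sqrt{\X\ell}/M^{1-\varepsilon}$ to get \eqref{xw25y}. One small inaccuracy: in the small-$\mathscr{M}$ regime there is no Weil bound in play (the phase in \eqref{d3d2109dmd} is an additive character, not a Kloosterman sum); \eqref{23er3xr31t31} comes from trivial estimation of the exponential plus Cauchy--Schwarz/Rankin--Selberg for the $m$-sum and Kim--Sarnak for the $r$-dependence, and your observation that the third term really produces $\ell^{1/2}\X^{1/4}\sqrt{q}/M$, weaker than the $\ell^2$-term stated in \eqref{xw25y}, is accurate.
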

Here, the third condition of \eqref{werwgwre} stems from the restriction \eqref{kx233r4} in the dominated case of $r=1$, with the choice of $\mathscr{M}^\ast $ as shown in \eqref{x3r109x}. One notices that the last two conditions in \eqref{werwgwre} requires that
\begin{align}\label{x340324cvv} &L \ge  \f{\X^{1/6+\varepsilon}}{q^{4/3}} \quad  \text{and}\quad
 L \ge \f{   \X ^{1/10}      }{ q^{6/5} M^{1/5-\varepsilon} },
\end{align} respectively, upon observing that $L^2\le \ell \le L^4$, whenever $\ell=\ell_1\ell_2$ or $\ell^2_1\ell^2_2$, with $\ell_1,\ell_2 \in \mathscr{P}_L$.
\subsubsection{Oscillatory case} Next, in this part we will focus on the situation where the parameter $C$ is suitably small such that $C\ll \sqrt{\X\ell}/M^{1+\varepsilon}$ in \eqref{ooo9944}. Recall \eqref{c35yub4u444}. In this case, the phase of the weight $U_\ell(y)$ will be roughly $\sqrt{\X \ell}/CM\gg \X^\varepsilon$. Set $\mathcal{A}:=4\pi\sqrt{\X\ell} /CM$, and put $\mathscr{J}(y)=U(y)F_k^+(\mathcal{A}\sqrt{y})$ for any $y\in \R^+$. To begin with, let us take a glance at the integral 
\bee 
\label{x3cer3gth5t}\int_{\R^+} \mathscr{J}(y)e(A\sqrt{y} - 3(yx)^{1/3})\ud y.\ene Notice that $\mathscr{J}$ is compactly supported on $[1/2, 5/2]$, and satisfies that $\mathscr{J}^{(j)}\ll_{j,k} \X^{j\varepsilon}$ for any $j>0$. On finds that the integral in $y$ above is negligibly small, unless that $\mathcal{A} \asymp x^{1/3}$, and, in which case it is essentially given by
\[x^{-1/6} e(-4x/\mathcal{A}^2) \mathscr{J}{\lf( (2x^{1/3}/\mathcal{A})^6 \ri)},\]up to an acceptable error term. It is clear that the integral in \eqref{x3cer3gth5t} with the positive sign in the exponential factor is negligibly small, by repeated integration by parts sufficiently many times. Inserting this back into 
\eqref{c35yub4u444} gives us the equivalent form of $\Psi (C; \ell, M, q) $ in \eqref{ooo9944} as follows:
\bee \label{x34r134rhhj}\fone{(\X \ell)^{1/4}} \sum_{c\sim C} \sum_{\substack{\pm , m,r\\m \ge 1\\
r|c M}} \f{\overline{A_F(m,1)}}{\sqrt{mr^2} }  \, e\lf (\pm \f{m r^2\overline{\ell q^2}}{c M}\ri)    e{\lf (-\f{mr^2}{ 2\pi^2 cM \ell q^2}\ri)}\mathscr{J}{\lf ({\f{mr^2}{\mathcal{M}} }\ri)},\ene 
with $\mathcal{M}\asymp \X^{1/2+\varepsilon} \ell^{3/2} q^2$ for any $\varepsilon>0$. As hinted by the argument in \S3.1.1 above, it suffices to concentrate on the $r=1$ case which, however, is thought to be responsible for the dominated contribution for the display in \eqref{x34r134rhhj}. Whilst, for the degenerate case where $r$ is suitably large, our analysis works as well, and the bound that we obtain is dominated by the former (as one will expect). In other words, we are simplified into \[\fone{(\X \ell)^{1/4}} \sum_{c\sim C} \sum_{\substack{m \ge 1}} \f{\overline{A_F(m,1)}}{\sqrt{m} }  \, e\lf ( \f{m \overline{\ell q^2}}{c M}\ri)    e{\lf (-\f{m}{ 2\pi^2 cM \ell q^2}\ri)}\mathscr{J}{\lf ({\f{m}{\mathcal{M}} }\ri)}.\]Here, as before, it suffices to consider the $`+'$ case in \eqref{x34r134rhhj}, on account of the resemblance of the treatments of both situations, $`+'$ and $`-'$. Recall that one has the coprimality relation $(cM,\ell q)=1$. Now, if one attempts to apply the reciprocity law and then the $\GL(3)$-Vorono\u{\i}, we finally arrive at 
\bee \label{414c1xx}\fone{(\X \ell)^{1/4}}  \sum_{n^\prime |\ell q^2}\, \sum_{c\sim C} \sum_{n\ge 1} \f{A_F(n^\prime, n)}{\sqrt{n}}\, S( cM, \pm n; \ell q ^2/n^\prime) \, \widetilde{\mathscr{J}_{\pm, \ell}} {\lf (\f{n{n^\prime }^2 \mathcal{M}}{ \ell^3 q^6 };\f{c}{C}\ri)}, \ene
akin to \eqref{xe1433}, where the integral transform is given by
\[\widetilde{\mathscr{J}_{\pm, \ell}} {\lf (x;\nu \ri)}=\int_{\R^+} \mathscr{J}{\lf (y\ri)}\, e{\lf ( \f{ \rho \mathcal{M} y}{ \nu C M \ell q^2}  \pm 3(yx)^{1/3}\ri)}\ud y\]for some non-zero constant $\rho\in \R$. Observe that the phase of the integral $\widetilde{\mathscr{J}_{\pm, \ell}}$ is still roughly $\sqrt{\X \ell}/CM$, and it is negligibly small, unless that $\mathcal{M}/(CM\ell q^2)\asymp x^{1/3}$, by repeated integration by parts sufficiently many times. This, in turn, indicates that the sum over $n$ is such that \[n{n^\prime }^2 \asymp \mathscr{N}:= \mathcal{M}^2/(CM)^3\ll \X^{1+\varepsilon} \ell ^3 q^4/(CM)^3.\]  

To proceed, one sees that it is necessary that ${\X \ell^3 q^4}/{(CM)^3}\gg  1$ for any $1\le C\ll \sqrt{\mathcal{X}\ell }/ M^{1+\varepsilon}$. This implies that $\ell 
\ge  \sqrt{M}/q^{2-\varepsilon}$, which is already ensured by the second pre-condition of \eqref{3x22r431} and the condition we have assumed for the case of $\ell=1$ on Page 17. Recall that $(\ell,q)=1$. Now, returning to \eqref{414c1xx}, there are two options for $n^\prime$, that is, 
\begin{itemize}
\item [(1)]  $n^\prime =1$;
\item [(2)] $n^\prime|\ell, n^\prime>1$, with $\ell \ge 2$.
\end{itemize}
Observe that, here, $n^\prime =q\text{ or } q^2$ cannot occur, since of 
that $q^2>\X^{1+100\varepsilon}\ell^3q^4/M^3$ by the final choice of $L$ as presented in \eqref{c4r1t55} below, upon recalling that $\ell =1,\ell_1\ell_2$ or $ \ell^2_1\ell^2_2$ with $\ell_1,\ell_2\in \mathscr{P}_L$, the modulo set defined as in \eqref{nxr4300509454} before. We will be ready to utilize Lemma \ref{201910000011svffs111} to the sum in \eqref{414c1xx}. It suffices to merely consider Case (1); the contribution from Case (2) would be relatively negligible, upon recalling the remark below the proof of Lemma \ref{201910000011svffs111}. As of now, it is clear that 
\[ x^i \nu ^j
 \f{\partial ^i}{\partial x^i} 
  \f{\partial ^j} {\partial \nu^j}\widetilde{\mathscr{J}_{\pm, \ell}} {\lf (x;\nu \ri)} \ll (\sqrt{\X \ell}/CM)^{i+j}\]
for any $i,j>0$. We now apply \eqref{xwxwf3433}, with $Z=1, Z_1=Z_2=\sqrt{\X \ell}/CM$, finding that the expression in \eqref{414c1xx} is dominated by
\begin{align*}
&\ll \fone{(\X \ell)^{1/4-\varepsilon}} \lf \{   \sqrt{ \mathscr{N} \ell q^2 c}+\lf (  \f{\sqrt{\X \ell}}{CM}\ri)^{1/2}  (\ell q^2)^{3/4}c+ c\sqrt{\mathscr{N}}\ri\}\\
&\ll \f{\X^{1/4+\varepsilon} \ell ^{7/4} q^{3}}{CM^{3/2}}+\f{\sqrt{C}(\ell q^2)^{3/4+\varepsilon}}{\sqrt{M}} +\f{ \X^{1/4+\varepsilon} \ell^{5/4} q^2}{\sqrt{C} M^{3/2}}.
\end{align*}
This is controlled by
\[\ll \f{\X^{1/4+\varepsilon} \ell^{7/4} q^{3}}{M^{3/2}} +\f{\X^{1/4+\varepsilon} \ell q^{3/2}}{M}+\f{\X^{1/4+\varepsilon} \ell^{5/4} q^2}{M^{3/2}}\] for any $1\le C\ll  \sqrt{\X \ell}/M^{1+\varepsilon}$. In other words, we have established the following
\begin{lemma} For any $ C\ll  \sqrt{\mathcal{X}\ell}/ M^{1+\varepsilon}$ with $\ell $ being as in \eqref{x4cr31466}, we obtain
\bee\label{xw25y22} \Psi (C; \ell, M, q) \ll \f{\X^{1/4+\varepsilon} \ell^{7/4} q^{3}}{M^{3/2}} +\f{\X^{1/4+\varepsilon} \ell q^{3/2}}{M}+\f{\X^{1/4+\varepsilon} \ell^{5/4} q^2}{M^{3/2}}.
\ene
\end{lemma}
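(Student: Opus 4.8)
\emph{Proof proposal.} The plan is to capitalise on the genuine oscillation of the weight in the regime $C\ll \sq{\X\ell}/M^{1+\varepsilon}$, performing a stationary phase analysis twice — once before and once after an application of the $\GL(3)$-Vorono\u{\i} formula (Lemma \ref{2017303534623421}) — and then extracting cancellation from the resulting bilinear sum of Kloosterman sums via Lemma \ref{201910000011svffs111}. Throughout, it suffices to treat the $+$-sign case and the dominant divisor $r=1$, the remaining cases being smaller or handled identically.

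First I would record that here the phase $\mathcal{A}:=4\pi\sq{\X\ell}/CM$ of the weight $U_\ell(y)$ satisfies $\mathcal{A}\gg \X^{\varepsilon}$, so with $\mathscr{J}(y)=U(y)F^+_k(\mathcal{A}\sq y)$ the inner $y$-integral \eqref{x3cer3gth5t} is evaluated by stationary phase: its $+$-sign variant is negligible by repeated integration by parts, while the $-$-sign variant is negligible unless $\mathcal{A}\asymp x^{1/3}$, in which case it equals $x^{-1/6}\e{-4x/\mathcal{A}^2}\mathscr{J}\big((2x^{1/3}/\mathcal{A})^6\big)$ up to an acceptable error. Feeding this back into \eqref{c35yub4u444} recasts $\Psi(C;\ell,M,q)$ as \eqref{x34r134rhhj}: a sum over $c\sim C$, over $r\mid cM$, and over $m$ localised at $\mathcal{M}\asymp \X^{1/2+\varepsilon}\ell^{3/2}q^2$, of $\overline{A_F(m,1)}/\sq{mr^2}$ twisted by the exponentials $\e{\pm mr^2\overline{\ell q^2}/cM}$ and $\e{-mr^2/2\pi^2cM\ell q^2}$. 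As in \S3.1.1 I would isolate the dominant case $r=1$.

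Next I would apply the reciprocity law $\e{mr^2\overline{\ell q^2}/cM}=\e{-mr^2\overline{cM}/\ell q^2}\,\e{mr^2/cM\ell q^2}$ — the last factor being \emph{flat} since all variables sit in the Linnik range — and then the $\GL(3)$-Vorono\u{\i} formula in the ramified form \eqref{34534vof35355}, legitimate because the new modulus $\ell q^2$ is divisible by the level $q^2$ of $F$. This yields \eqref{414c1xx}: a sum over divisors $n'\mid \ell q^2$, over $c\sim C$ and over the dual variable $n$, of $A_F(n',n)/\sq n$ against $S(cM,\pm n;\ell q^2/n')$ and a transform $\widetilde{\mathscr{J}_{\pm,\ell}}$ whose phase is again of size $\sq{\X\ell}/CM$. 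A second stationary phase pass shows $\widetilde{\mathscr{J}_{\pm,\ell}}$ is negligible unless $n(n')^2\asymp \mathscr{N}:=\mathcal{M}^2/(CM)^3\ll \X^{1+\varepsilon}\ell^3q^4/(CM)^3$, so the $n$-sum is non-empty only when $\X\ell^3q^4/(CM)^3\gg 1$, i.e.\ $\ell\ge \sq M/q^{2-\varepsilon}$, which the pre-conditions of \eqref{3x22r431} (with the hypothesis for $\ell=1$) guarantee; of the divisors $n'$ only $n'=1$ and $n'\mid\ell$ with $n'>1$ survive — $n'=q,q^2$ are ruled out by the final choice of $L$ — and the $n'>1$ part is negligible by the shape of the bilinear bound, leaving the $n'=1$ contribution.

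Finally I would feed the $n'=1$ sum into Lemma \ref{201910000011svffs111} under the correspondence $X\mapsto C$, $Y\mapsto\mathscr{N}$, $q\mapsto \ell q^2$, $h\mapsto M$ (using $(M,\ell q^2)=1$) with oscillation parameters $Z_1=Z_2=\sq{\X\ell}/CM$, which gives $\ll (\X\ell)^{-1/4+\varepsilon}\big\{\sq{C\mathscr{N}\ell q^2}+(\sq{\X\ell}/CM)^{1/2}(\ell q^2)^{3/4}C+C\sq{\mathscr{N}}\big\}$; inserting $\mathscr{N}\ll \X^{1+\varepsilon}\ell^3q^4/(CM)^3$ collapses this to $\X^{1/4+\varepsilon}\ell^{7/4}q^3/(CM^{3/2})+\sq{C}(\ell q^2)^{3/4+\varepsilon}/\sq M+\X^{1/4+\varepsilon}\ell^{5/4}q^2/(\sq C M^{3/2})$. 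The first and third terms decrease in $C$ (worst at $C=1$) and the middle one increases (worst at $C\asymp \sq{\X\ell}/M^{1+\varepsilon}$, where it is $\asymp \X^{1/4+\varepsilon}\ell q^{3/2}/M$), so collecting these endpoint values yields exactly \eqref{xw25y22}. The main obstacle I anticipate is the bookkeeping of the two stationary phase passes: one has to verify that $\widetilde{\mathscr{J}_{\pm,\ell}}$ retains the derivative bounds $x^i\nu^j\,\f{\partial^i}{\partial x^i}\f{\partial^j}{\partial\nu^j}\widetilde{\mathscr{J}_{\pm,\ell}}(x;\nu)\ll(\sq{\X\ell}/CM)^{i+j}$ needed to supply the correct oscillation parameters to Lemma \ref{201910000011svffs111}, and that the various negligibility claims (the $+$-branch, and the ranges outside $\mathcal{A}\asymp x^{1/3}$ and $\mathcal{M}/(CM\ell q^2)\asymp x^{1/3}$) really are negligible; by contrast the reciprocity-then-ramified-Vorono\u{\i} step, though the conceptual heart, is by now routine given the machinery of \S3.1.1.
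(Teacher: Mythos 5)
Your proposal follows essentially the same path as the paper's proof of this lemma: stationary-phase evaluation of $\widetilde{U}_\ell$ leading to \eqref{x34r134rhhj}, isolation of $r=1$, reciprocity followed by the ramified $\GL(3)$-Vorono\u{\i} formula \eqref{34534vof35355}, a second stationary-phase localisation of $\widetilde{\mathscr{J}_{\pm,\ell}}$ forcing $n(n')^2\asymp\mathscr{N}$, and finally Lemma \ref{201910000011svffs111} with $Z=1$, $Z_1=Z_2=\sqrt{\X\ell}/CM$ on the $n'=1$ piece, with the endpoint analysis in $C$ producing \eqref{xw25y22}. The dictionary $X\mapsto C$, $Y\mapsto\mathscr{N}$, $q\mapsto\ell q^2$, $h\mapsto M$ and the resulting three-term bound match the paper exactly, so this is the intended argument.
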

Altogether, the various bounds in \eqref{3c23t252}, \eqref{ooo9944}, \eqref{xw25y} and \eqref{xw25y22} above will result in:
\begin{prop} \label{992jdjd}For any $\ell =\{1,\ell_1\ell_2, \ell^2_1\ell^2_2\},\ell_1,\ell_2\in \mathscr{P}_L$, with $L$ satisfying that the constraints \eqref{3x22r43}, \eqref{3x22r431} and \eqref{x340324cvv}, we have
\begin{align} \label{xlcl4o0r4}&\sum_{f\in \mathcal{B}^\ast_k(M)}\omega^{-1}_f \lambda_f(\ell)L(1/2, F\otimes f)\ll \X^\varepsilon 
\left ( \f{1}{\sqrt{\ell}} +
\f{\X^{1/6} \ell q^{5/3} }{M}+q\sqrt{\f{\ell }{ M}}+
 \f{\ell^2\X^{1/4}\sqrt{q}}{M}
\right. \notag\\
&\phantom{=\;\;}\left.   \hskip 2.1cm     +\f{\X^{1/4} \ell^{7/4} q^{3}}{M^{3/2}} +\f{\X^{1/4} \ell q^{3/2}}{M}+\f{\X^{1/4} \ell^{5/4} q^2}{M^{3/2}}+\f{\sqrt{\X}}{M^{3/2}}  +\f{q}{M} \right ).
\end{align}
\end{prop}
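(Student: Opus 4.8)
The plan is to assemble the pieces already prepared in \S3.1; the proposition is the point where the chain ``Kuznetsov--Vorono\u{\i}--Reciprocity--Vorono\u{\i}--Bilinear form'' is run to its conclusion. I would start from the approximate functional equation for $L(1/2,F\otimes f)$ recorded just after Theorem \ref{3243251515}, and invoke the factorisation $\varepsilon(F\otimes f)=\pm\prod_{p|q}\varepsilon(\Pi_{F,p})^{2}$ from \S2.3 to reduce $\sum_{f\in\mathcal{B}^\ast_k(M)}\omega_f^{-1}\lambda_f(\ell)L(1/2,F\otimes f)$ to the shifted sum \eqref{x33} in the `$+$' case; the `$-$' case is treated identically after replacing $A_F$ by $A_{\widetilde{F}}$ through $A_F(m,1)=A_{\widetilde{F}}(1,m)$, the relevant length being $\X\asymp\mathcal{Q}_{F,f}^{1/2}\asymp M^{3/2}q^{2}$ up to the flexibility $\mathcal{Q}_{F,f}^{(1-\delta)/2}\le\X\le\mathcal{Q}_{F,f}^{(1+\varepsilon)/2}$. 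Next I would pass from $\Delta^\ast_{k,M}$ to $\Delta_{k,M}$ via the Iwaniec--Luo--Sarnak identity (\cite[Eqn.~(2.51)]{ILS}) and bound the oldform part by the convexity bound for $L(s,F\otimes f^\star)$, arriving at \eqref{3c23t252}: it then remains to estimate $\sum_{n\ge1}A_F(n,1)n^{-1/2}\Delta_{k,M}(n,\ell)$ modulo the admissible error $O(\sqrt{\X}/M^{3/2-\varepsilon}+q^{1+\varepsilon}/M)$.

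The next step is Petersson's formula (Lemma \ref{20191001}), separating the diagonal $\mathcal{O}(M,q)=O(\ell^{-1/2+\varepsilon})$ from the off-diagonal $\mathcal{S}(M,q)$ of \eqref{900999990004949991}. I would truncate the $c$-sum at $c\ll\sqrt{\X\ell}/M^{1-\varepsilon}$, decompose dyadically, and clear away the degenerate ranges with $(c,\ell)\neq1$ exactly as in Cases I and II above, where the structurally identical sums carry an extra saving from the smaller modulus; this reduces matters to \eqref{3r34j66633s} with $(c,\ell)=1$. At that point the unramified $\GL(3)$-Vorono\u{\i} formula \eqref{34534vof3534} is available---precisely because the hypothesis $L\le M^{1/8-\varepsilon}$ forces $c<q$, so the additive twist does not collude with the level $q^{2}$---and combining it with the asymptotics of $\Omega_\pm$ in Lemma \ref{3453464lljyrdfv432} delivers \eqref{ooo9944}, namely $\mathcal{S}(M,q)\ll\sup_{1\le C\ll\sqrt{\X\ell}/M^{1-\varepsilon}}\Psi(C;\ell,M,q)$.

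It then suffices to bound $\Psi(C;\ell,M,q)$ uniformly in $C$, which I would do by splitting the dyadic range of $C$ into two. On the non-oscillatory piece $C\asymp\sqrt{\X\ell}/M^{1-\varepsilon}$ the weight $U_\ell$ is essentially flat, and running \emph{reciprocity $\to$ ramified $\GL(3)$-Vorono\u{\i} \eqref{34534vof35355} $\to$ the bilinear Kloosterman estimate of Lemma \ref{c452t45t21}}, with the dyadic parameter $\mathscr{M}$ optimised as in \eqref{x3r109x}, yields \eqref{xw25y}; the point to keep in view is that after reciprocity the dual modulus $\ell q^{2}r/n'$ still carries the full factor $q^{2}$, so the ramified Vorono\u{\i} applies without loss. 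On the oscillatory piece $C\ll\sqrt{\X\ell}/M^{1+\varepsilon}$ the phase $\sqrt{\X\ell}/CM$ is large; a stationary-phase analysis of the $y$-integral pins down the dual length $\mathscr{N}\asymp\mathcal{M}^{2}/(CM)^{3}$ (which in particular forces $n'\mid\ell$, the cases $n'=q,q^{2}$ being impossible by the final choice of $L$ in \eqref{c4r1t55}), and then reciprocity, ramified $\GL(3)$-Vorono\u{\i}, and the Poisson-based bilinear bound of Lemma \ref{201910000011svffs111} give \eqref{xw25y22}. The narrow leftover interval $\sqrt{\X\ell}/M^{1+\varepsilon}\ll C\ll\sqrt{\X\ell}/M^{1-\varepsilon}$ contains only $O(\varepsilon\log\X)$ dyadic points, so it is absorbed by either estimate at the cost of an $\X^\varepsilon$. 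Collecting $\mathcal{O}(M,q)=O(\ell^{-1/2+\varepsilon})$, the two bounds \eqref{xw25y} and \eqref{xw25y22} for $\sup_C\Psi$, and the errors $\sqrt{\X}/M^{3/2}$ and $q/M$ from \eqref{3c23t252}, and substituting the size of $\X$ where convenient, produces \eqref{xlcl4o0r4}.

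I expect the genuine difficulty to sit not in this assembly but in the ingredients it rests on---the reciprocity step that transfers the computation from the unramified to the ramified $\GL(3)$-Vorono\u{\i} while keeping the conductor under control, and the need to optimise the bilinear estimates over $\mathscr{M}$ rather than merely at its generic scale. What is left here is careful book-keeping, and the one thing that must be checked is that all the constraints called upon along the way---\eqref{3x22r43}, \eqref{3x22r431}, \eqref{x340324cvv}, together with the conditions in \eqref{werwgwre} and \eqref{kx233r4}---hold simultaneously, which is exactly the hypothesis imposed on $L$ in the statement.
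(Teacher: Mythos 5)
Your proposal follows the paper's own argument faithfully: the same reduction through \eqref{x33}, \eqref{3c23t252}, and \eqref{ooo9944}, the same bifurcation into the non-oscillatory piece $C\asymp\sqrt{\X\ell}/M^{1-\varepsilon}$ (reciprocity, ramified Vorono\u{\i}, Lemma \ref{c452t45t21}, optimisation of $\mathscr{M}$ as in \eqref{x3r109x}) and the oscillatory piece $C\ll\sqrt{\X\ell}/M^{1+\varepsilon}$ (stationary phase, reciprocity, ramified Vorono\u{\i}, Lemma \ref{201910000011svffs111}), and the same final collection of bounds. This is essentially identical to the proof in \S3.1 and is correct as an assembly of the pieces established there.
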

This shows \eqref{00vo9493x4t422} immediately after a little simplification, as required.
\subsection{Proof of  Corollary \ref{corodoe923}}In this section, we shall turn to the proof of Corollary \ref{corodoe923}. Recalling the definition of the amplifier $\mathfrak{A}_{f}$ as presented in \eqref{3c5vt24yv}, which is dependent on $f_0 \in \mathcal{B}^\ast_k(M),$ and the property \eqref{x414rx310993}, via the positivity of the central value $L(1/2, F\otimes f)$ for any $f\in \mathcal{B}^\ast_k(M)$, we obtain
\[\mathfrak{A}_{f_0}  L(1/2, F\otimes f_0)\ll \sum_{f\in \mathcal{B}^\ast_k(M)}\omega^{-1}_f \mathfrak{A}_{f}  L(1/2, F\otimes f).\] Now, as a direct consequence of Proposition \ref{992jdjd}, by summing over $\ell= 1,\ell_1\ell_2, \ell^2_1\ell^2_2$ with $\ell_1,\ell_2\in \mathscr{P}_L$, it turns out that
\begin{align}\label{c4v3y3koplk} &L(1/2, F\otimes f) \ll \X^\varepsilon 
\left ( \f{M}{L} +
{ L^4  q^{5/3} \X^{1/6}}+qL^2 \sqrt{{ M }}+
 {L^8\X^{1/4}}\sqrt{q} \notag
\right. \notag\\
&\phantom{=\;\;}\left.   \hskip4cm     + \f{\X^{1/4} L^{7} q^{3}}{\sqrt{M}} +{\X^{1/4} L^4q^{3/2}}+\f{\X^{1/4} L^{5} q^2}{\sqrt{M}}+\f{\sqrt{\X}}{L} +\sqrt{\f{\X}{M}} +q \right ).
\end{align}

To beat the convexity barrier, the parameter $L$ on the first line on the right-hand side of \eqref{c4v3y3koplk} satisfies that
\bee \label{c7}  \begin{split}
& L\ge \f{M^{1+\varepsilon}}{\sqrt{\X}}, \quad   L\le \f{\X^{1/12-\varepsilon}}{q^{5/12}}\\
&L\le \fone{\sqrt{q}} \lf ( \f{\X}{M}\ri)^{1/4-\varepsilon},  \quad  L\le \lf (\f{\X}{q^2}\ri )^{1/32-\varepsilon};
\end{split}\ene 
whilst, the second line requires that 
\bee \label{c72}  \begin{split}
&   L\le \f{\X^{1/28-\varepsilon} M^{1/14}}{q^{3/7}},\quad  L\le \lf (\f{\X}{q^6}\ri )^{1/16-\varepsilon} \quad L\le \f{\X^{1/20-\varepsilon} M^{1/10}}{ q ^{2/5}}.
\end{split}\ene 
Recall that $(M^{3/2} q^2)^{1-\delta} \le \X\le (M^{3/2} q^2)^{1+\varepsilon}$ for any small constants $\delta, \varepsilon>0$. Collecting all the inequalities \eqref{3x22r43}, \eqref{3x22r431}, \eqref{x340324cvv}, \eqref{c7} and \eqref{c72}, we infer that the effective range for $L$ should be 
\begin{align}\label{c4r1t55}\f{q^{1+\varepsilon}} {{M^{1/4}} }+\f{M^{(1+3\delta)/4+\varepsilon}}{q^{1-\delta}} +1\le L\le \min {\lf \{ M^{3/64-\varepsilon}, \f{M^{3/32-\varepsilon}}{{q}^{1/4}},   \,  \f{{M}^{1/8}}{q^{5/14+\varepsilon}}, \, \f{M^{7/40-\varepsilon}}{q^{3/10}}\ri\}}.\end{align}
This yields the range $ M^{13/64+\varepsilon }\le  q\le M^{11/40-\varepsilon}$, and thus gives the requirement as shown in \eqref{c9vv0343}. The assertion of Corollary \ref{corodoe923} now follows by combining with \eqref{c4v3y3koplk}.
\section{Proofs of Theorems \ref{32432515777} \& \ref{c45254544d}}
In this section, we shall be dedicated to the proofs of Theorem \ref{32432515777} and Theorem \ref{c45254544d}. 
\subsection{Proof of Theorem \ref{32432515777}}We shall consider the average of the first moments of $\rm \text{GL}(3)\times GL(2)$ and $\text{GL}(2)$ $L$-functions in a family with a `harmonic’ average of $k
\sim K$. To be precise, let $W$ be a smooth function compactly supported on $[1/2,5/2]$ with bounded derivatives. Let $f$ be a $\text{GL}(2)$-newform of weight $k\in 2\Z$ and full level. Let $K$ be a sufficiently large parameter. Let $\ell\ge 1 $ be an integer such that $K^{1-\delta}\ll \ell \ll K^{1+\varepsilon}$ for any $\delta,\varepsilon>0$. We attempt to obtain an upper-bound for 
\bee \label{rvrg443445}\sum_{k \equiv 0 \bmod 2}W{\lf ( \f{k-1}{K}\ri)}  \sum_{f\in \mathcal{B}^\ast_k}\omega^{-1}_f \lambda_f(\ell) L(1/2, F\otimes f).\ene
To begin with, we will give a precise expression for the central value of
$L(s, F\otimes f) $. This can be derived in a standard way from \cite[Theorem 5.3]{IK}. Indeed, analogous to \cite[Eqn. (4.11)]{HMQ}, one finds that
\begin{align} \label{dji3234}
L(1/2, F\otimes f) =(1+\varepsilon(F\otimes f))\sum_{m,n\ge 1}\f{\mu(m)A_F(n,1)\lambda_f(mn)}{\sqrt{m^3n}}V^\star\lf (\f{m^3n}{k^3}\ri) ,
\end{align}
where \[V^\star(y)=\fone{2 \pi i}\int_{(3)}  \lf (\cos\lf ( \f{\pi s}{4 A}\ri)\ri)^{-24A} \f{L_\infty (s+1/2, F\otimes f) }{L_\infty (1/2, F\otimes f)}    L(1+2s, F) y^{-s} \f{\ud s}{s},\]
with $A>0$ sufficiently large.
Upon shifting the line to $\text{Re}(s)=-1/2$, we get
\bee \label{4320xkkxe} V^\star (y) =L(1, F)+O(k^\varepsilon y^{1/2+\varepsilon}) \ene
for $y\le 1$. Whilst, for $y>1$, one might shift the line to $\text{Re}(s)=A$, obtaining \[ y^j {V^\star}^{(j)}(y) \ll \lf (\f{k^3}{y}\ri)^{A}\]for any $j\ge 0$, upon exploiting Stirling's estimates to \eqref{3rqr34 r}. As described in \S2.3, the root number $\varepsilon(F\otimes f)$ is dependent on the weight $k$; thus, by \eqref{dji3234}, we need to turn to seeking an effective control over the following two sums
\bee \label{rf45t5650}
\sum_{k \equiv 0 \bmod 2}W{\lf ( \f{k-1}{K}\ri)}  \sum_{f\in \mathcal{B}^\ast_k}\omega^{-1}_f  \lambda_f(\ell)\,\sum_{n\ge 1} \f{A_F(n,1)\lambda_f(n)}{\sqrt{n}} V^\star {\lf (\f{n}{\mathcal{Y}}\ri)},
\ene
and
\bee \label{rf45t5651}
\sum_{k \equiv 0 \bmod 2} i^k\, W{\lf ( \f{k-1}{K}\ri)}   \sum_{f\in \mathcal{B}^\ast_k}\omega^{-1}_f \lambda_f(\ell)\, \sum_{n\ge 1} \f{A_F(n,1)\lambda_f(n)}{\sqrt{n}} V^\star {\lf (\f{n}{\mathcal{Y}}\ri)},
\ene
where $k^{3-\delta}\le \mathcal{Y}\le k^{3-\varepsilon}$ for arbitrarily small constants $\delta, \varepsilon>0$. By equipping with Petersoon formula, Lemma \ref{20191001}, we split the expressions in \eqref{rf45t5651} and \eqref{rf45t5651} into the diagonal terms and the off-diagonal terms. Via \eqref{4320xkkxe}, both diagonal terms add up to 
 \begin{align} \label{rf4VF5t5651}
\sum_{k \equiv 0 \bmod 2} (1+i^k)\, W{\lf ( \f{k-1}{K}\ri)}   \f{A_F(\ell,1)\lambda_f(\ell)}{\sqrt{\ell}} V^\star {\lf (\f{\ell}{\mathcal{Y}}\ri)}=&\f{L(1, F)A_F(\ell,1)\lambda_f(\ell)K}{4 \sqrt{\ell}} \widehat{W}(0)\notag \\
&+O(K^{-1/2+\varepsilon} ),
\end{align}with $\widehat{W}$ being the Fourier transform of $W$, which is given by $\widehat{W}(x)=\int_{\R} W(y)e(xy)\ud y$ for any $x\in \R^+$.

 We shall merely handle the off-diagonal term for the second sum \eqref{rf45t5651} in the following. The same analysis works for the first sum \eqref{rf45t5650}. The main difference is that, for the first sum, the Lemma \ref{201910000t666011svffs111} will be used, which, however, produces a possibly weaker bound since of more error terms thereof; for the second one, we will apply Lemma \ref{20191000s111} instead. It is verifiable that the off-diagonal term for \eqref{rf45t5651} is presented as the following form
\[
2 \pi \sum_{c\ge 1}\sum_{k \equiv 0 \bmod 2} W{\lf ( \f{k-1}{K}\ri)}  \sum_{n\ge 1} \f{A_F(n,1)}{\sqrt{n}} \f{S(n,\ell;c)}{c} J_{k-1}{\lf( \f{4 \pi \sqrt{n\ell}}{c}\ri)}V^\star{\lf (\f{n}{\mathcal{Y}}\ri)}.
\]
Now, one applies Lemma \ref{20191000s111} as mentioned above, which yields two terms 
\bee \label{x3r3209c323114}
\mathcal{T}(K,\ell)=\sum_{c\ll \sqrt{\mathcal{Y}\ell}/K^{2-\varepsilon}}
\, \sum_{n\ge 1} \f{A_F(n,1)}{\sqrt{n}}  \f{S(n,\ell;c)}{c} \, e{\lf( \f{2 \sqrt{n\ell}}{c}\ri)}V^\star  {\lf (\f{n}{\mathcal{Y}}\ri)},
\ene  and
\bee \label{x3r3209c3234}
\widehat{\mathcal{T}}(K,\ell)=\sum_{\sqrt{\mathcal{Y}\ell}/K^{1-\varepsilon}\ll c \ll   \sqrt{\mathcal{Y}\ell}/K^{1-100\varepsilon}}\, \sum_{n\ge 1} \, \f{A_F(n,1)}{\sqrt{n}} \f{S(n,\ell;c)}{c} W{\lf (  \f{4\pi\sqrt{n\ell}}{cK}\ri)}  V^\star {\lf (\f{n}{\mathcal{Y}}\ri)}
\ene
plus two error terms $\text{Err}^\flat(K,\ell))$ and $\text{Err}^\natural( K,\ell)$, say, which can be estimated as
\bee \label{3rc3rc3}\begin{split}
\text{Err}^\flat(K,\ell))& \ll \f{1}{K^{2-\varepsilon}}\sum_{n\sim \Y} \f{|A_F(n,1)|}{\sqrt{n}} \sum_{c\sim  \sqrt{\mathcal{Y}\ell } /K} \f{|S(n,\ell;c)|}{c}\ll \f{\Y^{3/4}\ell^{1/4}}{K^{5/2-\varepsilon}},
\end{split}\ene
and \bee \label{3rc3rc44}\begin{split}
\text{Err}^\natural (K,\ell))&\ll \f{\sqrt{\ell}}{K^{4-\varepsilon}}\sum_{n\sim \Y} {|A_F(n,1)|} \sum_{c\ll  \sqrt{\ell } \mathcal{Y}^{1/2+\varepsilon} } \f{|S(n,\ell;c)|}{c^2}\ll  \f{\mathcal{Y}\sqrt{\ell}}{K^{4-\varepsilon}},
\end{split}\ene
respectively. Here, $\text{Err}^\flat(K,\ell)$ comes from the first error term in \eqref{xwx0987wf3433}; while, $\text{Err}^\natural (K,\ell)$ is for the second one. We have applied bound \eqref{9909909800099} in estimating $\text{Err}^\natural (K,\ell)$, when $k$ is sufficiently large.

In the following paragraphs, we will be devoted to the estimates of $\mathcal{T}$ and $\widehat{\mathcal{T}}$ given as in \eqref{x3r3209c323114} and \eqref{x3r3209c3234}, respectively. We first take care of $\mathcal{T}$. Observe that the $c$-sum in \eqref{x3r3209c323114} can be truncated at $ c\ll \Y^\varepsilon$ for any $\varepsilon>0$\footnote[4]{It should be pointed out that the $c$-sum is not negligible here, upon recalling the support of the function $\widecheck{W}$ defined in a fashion as in \eqref{x24ro226787}; at least, the  special case of $c=1$ stills makes an indispensable contribution, where it is necessary that $\ell \gg K^{1-\varepsilon}$.}. We attempt to apply Lemma \ref{0l5y43} with $\alpha=2 \sqrt{\ell}/c$, which shows the estimate that
  \bee \label{kjci34c42}{\mathcal{T}} (K,\ell)\ll \f{1}{\Y^{1/2-\varepsilon} } \lf (1+(\Y \ell)^{1/2+\varepsilon}\ri)\ll  \ell^{1/2+\varepsilon} .\ene
Next, let us take a look at the term $\widehat{\mathcal{T}}$ in \eqref{x3r3209c3234}. By invoking the Vorono\u{\i} summation formula, Lemma \ref{2017303534623421}, this sum is thus essentially transformed into
\begin{align*} \begin{split} \sup_{\sqrt{\mathcal{Y}\ell}/K^{1-\varepsilon}\ll C \ll   \sqrt{\mathcal{Y}\ell}/K^{1-100\varepsilon}} \f{C }{\sqrt{\mathcal{Y}}}& \sum_{c\sim C}\sum_{\substack{\pm , m^\ast,r\\m^\ast \ge 1\\
r|c}}  \f{\overline{A_{F}(m^\ast,r)}}{m^\ast r^2 }  \, e\lf (\mp \f{m^\ast r^2 \overline{\ell }}{c}\ri)\, \Omega_{\pm}\lf ({\f{m^\ast r^2 \mathcal{Y}}{c^3};{V}_{c,\ell}}\ri),
  \end{split} \end{align*} with $V_{c,\ell}(x)=V^\star(x) W{({4\pi \sqrt{x\Y\ell}}/{(cK)}
) } $ for any $x\in \R^+$. 
By Lemma \ref{3453464lljyrdfv432}, the display above turns out to be controlled by
\bee \label{cdewrwc2}\begin{split}  \sup_{\sqrt{\mathcal{Y}\ell}/K^{1-\varepsilon}\ll C \ll   \sqrt{\mathcal{Y}\ell}/K^{1-100\varepsilon}}  \f{\Y^{1/6}}{C }
& \sum_{c\sim C}\sum_{\substack{\pm , m^\ast,r\\m^\ast \ge 1\\
r|c}}    \f{\overline{A_{F}(m^\ast,r)}}{{m^\ast}^{1/3} r^{2/3} }  \, e\lf (\mp \f{m^\ast r^2\overline{\ell }}{c }\ri)\, \widehat{V_{\pm,\ell}}   \lf ({\f{m^\ast r^2 \mathcal{Y}}{c ^3}}\ri)
  ,\end{split}\ene 
 with the integral transform $\widehat{V_{\pm,\ell}} $ being
 \[ \widehat{V_{\pm,\ell}} (x)=\int_{\R^+} 
V^\star(y) W{\lf (\f{ 4\pi \sqrt{y\Y\ell}}{cK}
\ri ) }    \, e{\lf (
\pm 3(xy)^{1/3}\ri ) } \ud y.\]
A direct computation with the help of the Kim–Sarnak bound shows an upper-bound of $O(\Y^{1/6+\varepsilon})$ for \eqref{cdewrwc2}; this, however, is sufficient for our purpose. In summary, upon collecting the estimates in \eqref{rf4VF5t5651}, \eqref{3rc3rc3}, \eqref{3rc3rc44} and \eqref{kjci34c42}, we obtain 
 \begin{align}\label{0505404v}&\sum_{k \equiv 0 \bmod 2}W{\lf ( \f{k-1}{K}\ri)}   \sum_{f\in \mathcal{B}^\ast_k}\omega^{-1}_f
 \lambda_f(\ell) L(1/2, F\otimes f) \notag\\
 &\hskip 2.3cm \ll \Y^{\varepsilon }\lf (\f{K^{1+\varepsilon}}{\sqrt{\ell}}+\f{  \Y^{3/4} \ell^{1/4}  }{ K^{5/2-\varepsilon}}  + \f{\mathcal{Y}\sqrt{\ell}}{K^{4-\varepsilon}}+\ell^{1/2+\varepsilon} +\Y^{1/6+\varepsilon}\ri).
 \end{align}
Now, after divided by a factor $\sqrt{\ell}$ on both sides of \eqref{0505404v}, we sum over $K^{1-\delta}\ll \ell \ll K^{1+\varepsilon}$ for any $\delta,\varepsilon>0$, obtaining
  \begin{align*}& \sum_{k \equiv 0 \bmod 2}W{\lf ( \f{k-1}{K}\ri)}   \sum_{f\in \mathcal{B}^\ast_k}\omega^{-1}_fL(1/2,f)L(1/2, F\otimes f)\ll K^{1+\varepsilon}.
 \end{align*}This gives \eqref{009koieu3} immediately. 
  \subsection{Proof of Theorem \ref{c45254544d}} At the end of this paper, it remains to complete the proof of Theorem \ref{c45254544d}. This can be realized by a similar argument in \S4.1. Actually, by repeating the procedure in evaluating the multiple sum \eqref{rvrg443445} with $
  \ell=1$ as in \S4.1, one sees that the sum in \eqref{009koieu3ss} is boiled down to evaluating $\widehat{\mathcal{T}}(K,1)$, up to the error terms $\text{Err}^\flat(K,1)$ and $\text{Err}^\natural (K,1)$. We are thus allowed to deduce 
 \begin{align*}\sum_{k \equiv 0 \bmod 2}W{\lf ( \f{k-1}{K}\ri)}   \sum_{f\in \mathcal{B}^\ast_k}\omega^{-1}_f
L(1/2, F\otimes f) =\f{L(1, F)K}{4}\widehat{W}(0) &+\widehat{\mathcal{T}}(K,1)+\text{Err}^\flat(K,1)\\
&+\text{Err}^\natural (K,1)+O(K^{-1/2+\varepsilon}).
 \end{align*}  
Observe that \eqref{cdewrwc2} is estimated as $O(\Y^{-100})$ whenever $\ell=1$, which, however, implies that  $\widehat{\mathcal{T}}(K,1)$ is negligibly small in term of the magnitude. One concludes that 
 \begin{align*}\sum_{k \equiv 0 \bmod 2}W{\lf ( \f{k-1}{K}\ri)}   \sum_{f\in \mathcal{B}^\ast_k}\omega^{-1}_f
L(1/2, F\otimes f) 
&=\f{L(1, F)K}{4} \widehat{W}(0)+O(K^{-1/4+\varepsilon}), 
 \end{align*}  
as desired.

\end{document}